\numberwithin{equation}{section}
\theoremstyle{definition}
\newtheorem{thm}{Theorem}[section]
\newtheorem{cor}[thm]{Corollary}
\newtheorem{lem}[thm]{Lemma}
\newtheorem{prop}[thm]{Proposition}
\newtheorem{defn}[thm]{Definition}
\newtheorem*{thm*}{Theorem}
\newtheorem*{prop*}{Proposition}
\newcommand{\xto}[1]{{\overset{#1}{\longrightarrow}}}
\newcommand{\mxto}[1]{{\overset{#1}{\longmapsto}}}
\newcommand{\eqr}[1]{{\overset{#1}{=}}}
\def \Q {\mathbbm{Q}}
\def \Z {\mathbbm{Z}}
\def \N {\mathbbm{N}}
\def \X {\Z^2}
\def \I {\{1,2\}}
\def \sl {\mathfrak{sl}}
\def \simeqto {\overset{\simeq}{\longrightarrow }}
\def \u {\mathcal{U}_3}
\def \c {\mathcal{C}}
\def \C {\mathbf{C}}
\def \d {\mathcal{D}}
\def \du {\dot {\mathcal{U}}_3}
\def \up {\mathcal{U}_3^+}
\def \um {\mathcal{U}_3^-}
\def \dup {\dot{\mathcal{U}}_3^+}
\def \dum {\dot{\mathcal{U}}_3^-}
\def \bb {{\bf b}}
\def \kk {{\bf k}}
\def \hh {{\bf h}}
\def \b {\mathcal{B}}
\def \hhlm {{_\mu{\bf h}_\lambda}}
\def \kklm {{_\mu{\bf k}_\lambda}}
\def \Klm {{_\mu K_\lambda}}
\def \Hlm {{_\mu H_\lambda}}
\def \dHlm {{_\mu \dot{H}_\lambda}}
\def \Dlm {{_\mu D_\lambda}}
\def \Mlm {{_\mu M_\lambda}}
\def \e {\mathcal{E}}
\def \f {\mathcal{F}}
\def \eia {\e_i^{(a)}}
\def \fia {\f_i^{(a)}}
\def \Eia {E_i^{(a)}}
\def \Fia {F_i^{(a)}}
\def \rE {{\sf E}}
\def \rF {{\sf F}}
\def \rEia {\rE_i^{(a)}}
\def \rFia {\rF_i^{(a)}}
\def \U {{\bf U}_3}
\def \Up {{\bf U}_3^+}
\def \dU {\dot{\bf U}_3}
\def \dUp {\dot{\bf U}_3^+}
\def \AU {{_{\mathcal A}{\bf U}_3}}
\def \AUp {{_{\mathcal A}{\bf U}_3^+}}
\def \AdU {{_{\mathcal A}}{\dot{\bf U}}_3}
\def \AdUp {{_{\mathcal A}{\dot{\bf U}}_3^+}}
\def \AdUm {{_{\mathcal A}{\dot{\bf U}}_3^-}}
\def \dB {\dot{B}}
\def \dW {\dot{W}}
\def \dS {\dot{S}}
\def \Ob{\operatorname{Ob}}
\def \Tr{\operatorname{Tr}}
\def \H{\operatorname{H}}
\def \Span{\operatorname{Span}}
\def \Kar{\operatorname{Kar}}
\def \Ab{\mathbf{Ab}}
\def \Id {{\rm Id}}
\def \q {q^{-1}}
\def \onel {{\bf 1}_\lambda}
\newcommand{\onell}[1]{{\bf 1}_{#1}}
\def \la {\langle}
\def \ra {\rangle}
\def \scs {\scriptstyle}
\newcommand{\gt}{\xybox{
(-1.5,2)*{};(1.5,2)*{} **\crv{(-1.5,-0.5) & (1.5,-0.5)};
(0,-2)*{};
}}
\newcommand{\gth}{\xybox{
(-2,-4)*{};(2,-4)*{} **\crv{(-2,-1) & (2,-1)} ?(1)*\dir{>} ?(.7)*{\bullet};
(3,0)*{\scs f_3};
(0,1.5)*{\scs \lambda};
(-6,4)*{};
(6,4)*{};
}}
\newcommand{\gtl}{\xybox{
(-2,4)*{};(2,4)*{} **\crv{(-2,1) & (2,1)} ?(0)*\dir{<} ?(.7)*{\bullet};
(3,1)*{\scs f_1};
(-3,-1)*{};(0,-1)*{} **\crv{(-3,1) & (0,1)} ?(.0)*\dir{<};
(0,-1)*{};(-3,-1)*{} **\crv{(0,-3) & (-3,-3)} ?(.1)*{\ast};
(2,-2)*{\scs f_2};
(-6,4)*{};
(6,4)*{};
}}
\newcommand{\gb}{\xybox{
 (0,-2);(0,2); **\dir{-};
 (0,0)*{\bullet};
}}
\newcommand{\gc}{\xybox{
 (-1,-2)*{};(1,2)*{} **\crv{(-1,0) & (1,0)};
 (1,-2)*{};(-1,2)*{} **\crv{(1,0) & (-1,0)};
 (2,0)*{};(-2,0)*{};
}}
\newcommand{\gch}{\xybox{
 (-1,-2)*{};(1,2)*{} **\crv{(-1,0) & (1,0)} ?(1)*\dir{>};
 (1,-2)*{};(-1,2)*{} **\crv{(1,0) & (-1,0)} ?(0)*\dir{<};
 (2,0)*{};(-2,0)*{};
}}
\newcommand{\gcl}{\xybox{
 (-1,-2)*{};(1,2)*{} **\crv{(-1,0) & (1,0)} ?(0)*\dir{<};
 (1,-2)*{};(-1,2)*{} **\crv{(1,0) & (-1,0)} ?(1)*\dir{>};
 (2,0)*{};(-2,0)*{};
}}
\newcommand{\vc}[5]{
\xy
 (0,4.5)*{};
 (-3,4);(3,4); **\dir{.};
 (5,4)*{#3};
 (0,2)*{#1};
 (-3,0);(3,0); **\dir{.};
 (5,0)*{#4};
 (0,-2)*{#2};
 (-3,-4);(3,-4); **\dir{.};
 (5,-4)*{#5};
 (0,-4.5)*{};
\endxy
}
\newcommand{\vcs}[2]{
\xy
 (0,4.5)*{};
 (-3,4);(3,4); **\dir{.};
 (0,2)*{#1};
 (-3,0);(3,0); **\dir{.};
 (0,-2)*{#2};
 (-3,-4);(3,-4); **\dir{.};
 (0,-4.5)*{};
\endxy
}
\newcommand{\vcsl}[2]{
\xy
 (0,4.5)*{};
 (-7,4);(7,4); **\dir{.};
 (0,2)*{#1};
 (-7,0);(7,0); **\dir{.};
 (0,-2)*{#2};
 (-7,-4);(7,-4); **\dir{.};
 (0,-4.5)*{};
\endxy
}
\newcommand{\tp}[4]{
\left\{\xy
 (0,4.5)*{};
 (-3,4);(3,4); **\dir{--};
 (6,4)*{#3};
 (0,2)*{#1};
 (-3,0);(3,0); **\dir{--};
 (6,0)*{#4};
 (0,-2)*{#2};
 (-3,-4);(3,-4); **\dir{--};
 (6,-4)*{#3};
 (0,-4.5)*{};
\endxy\right\}
}
\newcommand{\tps}[2]{
\left\{\xy
 (0,4.5)*{};
 (-3,4);(3,4); **\dir{--};
 (0,2)*{#1};
 (-3,0);(3,0); **\dir{--};
 (0,-2)*{#2};
 (-3,-4);(3,-4); **\dir{--};
 (0,-4.5)*{};
\endxy\right\}
}
\newcommand{\lbub}{\xybox{
  (-3,0)*{};(3,0)*{} **\crv{(-3,4) & (3,4)} ?(.0)*\dir{>};
  (3,0)*{};(-3,0)*{} **\crv{(3,-4) & (-3,-4)} ?(.1)*{\ast};
  (-5,-5)*{}; (5,5)*{};
}}
\newcommand{\rbub}{\xybox{
  (-3,0)*{};(3,0)*{} **\crv{(-3,4) & (3,4)} ?(.0)*\dir{<};
  (3,0)*{};(-3,0)*{} **\crv{(3,-4) & (-3,-4)} ?(.1)*{\ast};
  (-5,-5)*{}; (5,5)*{};
}}
\newcommand{\lbbub}{\xybox{
  (-3,0)*{};(3,0)*{} **\crv{(-3,4) & (3,4)} ?(.0)*\dir{>};
  (3,0)*{};(-3,0)*{} **\crv{(3,-4) & (-3,-4)} ?(.1)*{\bullet};
  (-5,-5)*{}; (5,5)*{};
}}
\newcommand{\rbbub}{\xybox{
  (-3,0)*{};(3,0)*{} **\crv{(-3,4) & (3,4)} ?(.0)*\dir{<};
  (3,0)*{};(-3,0)*{} **\crv{(3,-4) & (-3,-4)} ?(.1)*{\bullet};
  (-5,-5)*{}; (5,5)*{};
}}
\begin{document}

\date{\today}

\title{Trace decategorification of categorified quantum $\sl_3$}

\author{Marko \v Zivkovi\' c}
%\address{Universit\"at Z\"urich, Winterthurerstr. 190
%CH-8057 Z\"urich, Switzerland}
%\email{marko.zivkovic@math.uzh.ch}

\maketitle

\begin{abstract}
We prove that the trace of categorified quantum $\sl_3$ introduced by M.\ Khovanov and A.\ Lauda can also be identified with quantum $\sl_3$, thus providing an alternative way of decategorification.
\end{abstract}

\section{Introduction}

In this paper we are continuing the program of trace decategorification started in \cite{BHZ}. The first plan of categorification of quantum groups consisted in taking the Grothendieck ring of a category to get a quantum group (see \cite{KL1,KL2,KL3,Lau1}). We hope that the same can be done by taking the trace of the same categories. Hence, we do not construct the category again, we prove that the trace of the categorified quantum group is isomorphic to its Grothendieck ring, i.e.\ the quantum group we have categorified.

The trace of a category is defined in Definition \ref{Def} as
\begin{equation*}
 \Tr(\c) = \bigoplus_{x\in \Ob(\c)}\c(x,x)/\Span\{fg-gf\},
\end{equation*}
where $(f,g)$ run through all pairs of morphisms $f\colon
x\longrightarrow y$, $g\colon y\longrightarrow x$ with $x,y\in
\Ob(\c)$. The trace of the 2-category $\C$ is a 1-category whose homs are the traces of the hom categories of $\C$. In graphical calculus, identifying $fg$ with $gf$ can be seen as mapping a diagram to its image on a cylinder which glues its source and target 1-morphism, such as
\begin{equation}
\label{cylinder}
\xy
{\ar (-9,8)*{}; (-9,-8)*{}};
(-3,-8)*{};(3,8)*{} **\crv{(-3,0) & (3,0)}?(1)*\dir{>};
(3,-8)*{};(-3,8)*{} **\crv{(3,0) & (-3,0)}?(1)*\dir{>}?(.8)*{\bullet};
(-12,0)*{};
(6,0)*{};
\endxy\mapsto \xy
(-5,0)*{};
(0,-7)*{};(0,7)*{} **\crv{(2,-7) & (2,7)};
(0,-7)*{};(0,7)*{} **\crv{(-2,-7) & (-2,7)};
(6,-7)*{};(6,7)*{} **[|(2)]\crv{~**\dir{.} (4,-7) & (4,7)};
(6,-7)*{};(6,7)*{} **[|(2)]\crv{(8,-7) & (8,7)}?(.5)*[|(2)]\dir{<};
(12,-7)*{};(18,7)*{} **[|(2)]\crv{(13,-7) & (14,0) & (20,0) & (19,7)}?(.8)*[|(2)]\dir{>};
(18,-7)*{};(12,7)*{} **[|(2)]\crv{(19,-7) & (20,0) & (14,0) & (13,7)}?(.25)*[|(2)]\dir{>}?(.75)*{\bullet};
(12,-7)*{};(12,7)*{} **[|(2)]\crv{~**\dir{.} (10,-7) & (10,7)};
(18,-7)*{};(18,7)*{} **[|(2)]\crv{~**\dir{.} (16,-7) & (16,7)};
(24,-7)*{};(24,7)*{} **\crv{(26,-7) & (26,7)};
(24,-7)*{};(24,7)*{} **\crv{~**\dir{.} (22,-7) & (22,7)};
(0,-7)*{};(24,-7)*{} **\dir{-};
(0,7)*{};(24,7)*{} **\dir{-};.
\endxy
\end{equation}
Therefore, the trace of a 2-category can be seen as a space of cylinders with strands drawn on them.

There are advantages of trace $\Tr$ compared with Grothendieck ring $K_0$:
\begin{itemize} 
\item
Trace is defined for a larger class of categories (linear instead of additive).
\item
When hom-spaces are graded abelian groups, the trace usually has a richer structure than $K_0$.
\item
Trace of the category coincides with the trace of its Karoubi envelope, which does not hold for the Grothendieck ring in general.
\end{itemize}

Because of the last point, we can slightly simplify the categories which categorify quantum groups--it is not needed to take the Karoubi envelope $\dot{\mathcal{U}}:=\Kar(\mathcal{U})$ at the end, the category $\mathcal{U}$ before it is equally good as a categorified group. However, the Karoubi envelope is still needed to prove the statement, because we are actually calculating the trace of $\dot{\mathcal{U}}$.

In \cite{BHZ} it is shown that the program works for quantum $\sl_2$, i.e.\ that the trace of the categorified quantum $\sl_2$ is indeed the quantum $\sl_2$. Here, in Theorem \ref{Final} we prove the same for $\sl_3$.

\begin{thm*}
There is an isomorphism
\begin{equation*}
\Tr(\u)\cong\AdU
\end{equation*}
of $\Z[q,\q]$-algebras.
\end{thm*}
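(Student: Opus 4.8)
The plan is to mirror the strategy used for $\sl_2$ in \cite{BHZ}, upgrading every step to accommodate the two simple roots $i\in\I$ and the quantum Serre relations. First I would pass to the Karoubi envelope. Since the trace is insensitive to idempotent completion, $\Tr(\u)\cong\Tr(\du)$ with $\du=\Kar(\u)$, and the latter is a graded, Krull--Schmidt $\Z[q,\q]$-linear $2$-category whose indecomposable $1$-morphisms categorify Lusztig's canonical basis; in particular $K_0(\du)\cong\AdU$. Working with $\du$ lets me use the full diagrammatic basis theorem of Khovanov--Lauda for the $2$-hom spaces, which is what makes the trace computable.

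Next I would construct the comparison homomorphism $\gamma\colon\AdU\to\Tr(\u)$ by sending the generators $\rEia\onel$ and $\rFia\onel$ to the trace classes of the identity $2$-morphisms of the corresponding thick-strand $1$-morphisms $\eia\onel$ and $\fia\onel$, and $\onel$ to the class of $\mathrm{id}_{\onel}$. Multiplication in $\Tr(\u)$ is induced by horizontal composition of $1$-morphisms, that is, by placing diagrams side by side on the cylinder. Well-definedness amounts to checking the defining relations of $\AdU$ — the divided-power identities, the $\rE$--$\rF$ commutation relations, and the quantum Serre relations — among these classes. Each such relation is the image under $\Tr$ of an isomorphism of $1$-morphisms in $\du$: because $\Tr$ sends a direct-sum decomposition of a $1$-morphism to the corresponding additive relation among the identity classes, and identifies the classes of isomorphic $1$-morphisms by conjugating with the isomorphism (up to the relevant grading shift in $q$), the relations descend verbatim.

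The two substantial steps are surjectivity and injectivity of $\gamma$. For surjectivity I would show that $\Tr(\u)$ is generated as a $\Z[q,\q]$-algebra by the classes above. Using the basis theorem, an arbitrary endomorphism $2$-morphism is a $\Z[q,\q]$-combination of diagrams built from dots, crossings, caps, cups and bubbles; drawing such a diagram on the cylinder and applying the cyclic relation $fg\sim gf$ together with the Khovanov--Lauda relations lets me reduce every closed component to a standard form. The key point — and the reason the answer is $\AdU$ rather than a larger current-type algebra — is that, unlike in the positive half $\up$, here the biadjunction (caps and cups) together with the bubble-slide and infinite-Grassmannian relations rewrite dots, curls and bubbles in terms of $\eia$--$\fia$ products; so these closed components yield no new algebra generators and merely land in the mixed weight spaces $\onell{\mu}\AdU\onell{\lambda}$ as images of products such as $\rEia\rFia\onel$.

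For injectivity I would compare $\Tr(\u)$ against the triangular decomposition of the target,
\begin{equation*}
\AdU\ \cong\ \AdUm\cdot\AdU^{0}\cdot\AdUp,
\end{equation*}
with $\AdU^{0}$ the Cartan (bubble) part. Concretely, the surjectivity reduction produces a spanning set of $\Tr(\u)$ indexed by (negative monomials)$\times$(Cartan data)$\times$(positive monomials), and one shows that $\gamma$ carries a PBW-type basis of $\AdU$ onto it; linear independence is then established through the nondegenerate Hom-pairing of $\du$ supplied by the cyclic biadjoint structure, whose graded dimensions are read off from the basis theorem. The main obstacle is precisely this last no-collapse bookkeeping for $\sl_3$. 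In the two-color setting the indecomposable $1$-morphisms are governed by the quiver Hecke (KLR) algebra with its Serre relations, so both their classification and the analysis of how morphisms between distinct $1$-morphisms cut the endomorphism cocenters down to the size dictated by the canonical basis are markedly more involved than for $\sl_2$; and the bubble-slide manipulations that tame the bubbles in $\sl_2$ must be re-established on the cylinder with two interacting colors. Controlling these identifications — so that no weight space of $\Tr(\u)$ exceeds the corresponding weight space of $\AdU$ — is where the real work lies.
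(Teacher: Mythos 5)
Your outline matches the paper's proof up to the point where the real difficulty begins, and there it diverges into an argument that is not carried out and would not obviously work. The Karoubi reduction, the definition of the comparison map on the generators $\rEia\onel$, $\rFia\onel$, and the verification of the defining relations (divided powers via $\e_i^a\onel\cong[a]!(\eia\onel)$, the Serre and commutator relations via diagrammatic identities) are exactly the paper's Lemmas \ref{rel1} and \ref{rel2} together with the final step of the proof of Theorem \ref{Final}. Your surjectivity step --- rewriting any endomorphism diagram on the cylinder into a combination of identities of $1$-morphisms of the form $b^-\circ b^+$ --- is morally the paper's chain of maps $\dHlm\xto{\alpha}\Hlm\xto{\beta^\infty}\Mlm\to\Dlm\xto{\delta}\Klm$ from Subsection \ref{Construct}.

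The genuine gap is injectivity. You invoke a triangular decomposition $\AdUm\cdot\AdU^{0}\cdot\AdUp$ with a ``Cartan (bubble) part,'' but in the idempotented form there is no Cartan factor: the paper's basis $\dB$ consists of the products $b^-b^+$ alone, and degree considerations kill every non-identity bubble in the trace. You then appeal to a ``nondegenerate Hom-pairing supplied by the cyclic biadjoint structure.'' No such pairing is constructed, and it is not clear how a pairing on $2$-hom spaces would descend to the quotient $\Tr$, which identifies $fg$ with $gf$ across \emph{different} objects; you would still have to show that the spanning set produced by your surjectivity reduction is not collapsed further by these identifications, and that is precisely the hard part. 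The paper's solution is Proposition \ref{mpp}: it constructs an explicit linear projection $\pi=\delta(\beta^\infty\gamma)^\infty\beta^\infty\alpha\colon\dHlm\to\Klm$ onto the free module spanned by $\{\Id_{\iota(b)}\mid b\in{_\mu\dB_\lambda}\}$ and proves, by a double induction on the length of the source $1$-morphism and on the number of crossings (Subsection \ref{TraceP}), the trace property $\pi(gh)=\pi(hg)$; this is what forces $\Tr(\du(\lambda,\mu))\cong 1_\mu(\AdU)1_\lambda$ and nothing smaller. Two further load-bearing inputs are absent from your proposal: Sto\v si\'c's theorem that ${_\mu\b^{*+}_\lambda}$ is a \emph{strongly upper-triangular} basis of $\dup(\lambda,\mu)$ (which is what makes the projections $\delta^\pm$ land on multiples of identities and gives $\Tr(\dup(\lambda,\mu))\cong\Z[q,\q]{_\mu\b^+_\lambda}$), and the separate verification that $\dB=\{b^-b^+\}$ is in fact a $\Z[q,\q]$-basis of $\AdU$. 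Without a concrete substitute for the trace-property argument, the proposal does not establish injectivity.
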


Here, $\u$ is the categorified quantum $\sl_3$ defined in \cite{KL3} and $\AdU$ is Lusztig's quantum $\sl_3$.

The relative simplicity of the $\sl_3$ case comes from the existence of the explicit Lusztig's canonical base of the positive part $\Up$ of the quantum $sl_3$ (see \cite{Lus}). Sto\v si\' c in \cite{Sto} found the idecomposable objects in $\dup$, the positive part of categorified quantum $\sl_3$, which categorify Lusztig's canonical base, and which form a strongly upper-triangular bases of $\dup$. After that, the result for the positive part of quantum $\sl_3$ (Theorem \ref{poz}) is straightforward.

\begin{thm*}
There is an isomorphism
\begin{equation*}
\Tr(\up)\cong\AdUp
\end{equation*}
of $\Z[q,\q]$-algebras.
\end{thm*}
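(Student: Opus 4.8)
The plan is to reduce everything to the Karoubi envelope $\dup$ and then read the trace off Sto\v si\' c's classification of indecomposables. Since the trace of a category coincides with the trace of its Karoubi envelope (recalled in the introduction), it suffices to compute $\Tr(\dup)$. The category $\dup$ is graded Krull--Schmidt: each graded $\operatorname{Hom}$-space is finite-dimensional in every degree, and each indecomposable $1$-morphism $b$ has a graded-local endomorphism ring that is concentrated in non-negative degrees and whose degree-zero part is spanned by $\Id_b$. First I would invoke Sto\v si\' c's theorem \cite{Sto}: the indecomposable $1$-morphisms of $\dup$, taken up to grading shift, are in bijection with Lusztig's canonical basis $\dot{\mathbf B}^+$ of $\Up$ and form a strongly upper-triangular basis of $\dup$.

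Next I would prove the module isomorphism. For a graded Krull--Schmidt category the trace is the direct sum, over the isomorphism classes of indecomposables, of the cocenters $\operatorname{End}(b)/[\operatorname{End}(b),\operatorname{End}(b)]$, where grading shifts are recorded by the action of $q$. The decisive observation is that a commutator $fg-gf$ can yield a non-zero endomorphism of an indecomposable only from morphisms $f,g$ of opposite non-zero degrees; since the endomorphism rings here have no negative degrees, all such commutators among the shifts of a single indecomposable vanish, while for non-isomorphic indecomposables the composites $fg$ and $gf$ lie in degree zero, where a non-zero value would force an isomorphism, so these commutators vanish too. Hence $\Tr(\dup)$ is free over $\Z[q,\q]$ with basis $\{[\Id_b] : b\in\dot{\mathbf B}^+\}$, and the assignment $[\Id_b]\mapsto b$ defines a $\Z[q,\q]$-module isomorphism $\Tr(\dup)\simeqto\AdUp$, since $\dot{\mathbf B}^+$ is likewise a $\Z[q,\q]$-basis of $K_0(\dup)\cong\AdUp$.

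Finally I would match the algebra structures. The product on $\Tr(\dup)$ is induced by composition of $1$-morphisms, so $[\Id_b]\cdot[\Id_{b'}]=[\Id_{bb'}]$, and decomposing $bb'$ into indecomposables rewrites this as $\sum_c m^c_{bb'}\,[\Id_c]$ with $m^c_{bb'}$ the multiplicity of $c$ in $bb'$. Strong upper-triangularity ensures that this decomposition is unitriangular with respect to the order on $\dot{\mathbf B}^+$, so the $m^c_{bb'}$ are exactly the structure constants of the canonical basis in $\AdUp$, and the module isomorphism upgrades to an isomorphism of $\Z[q,\q]$-algebras. I expect the main obstacle to lie not in the trace computation, which is clean once the indecomposables are understood, but in securing its hypotheses integrally and uniformly: that every indecomposable has one-dimensional degree-zero endomorphisms and a non-negatively graded endomorphism ring, and that products of indecomposables decompose unitriangularly over $\Z[q,\q]$. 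All of this is exactly what Sto\v si\' c's strongly upper-triangular basis provides, which is why the result is then straightforward.
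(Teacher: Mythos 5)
Your argument is correct in outline, and the module-level part is essentially the paper's: reduce to the Karoubi envelope via Proposition \ref{Kar}, invoke Sto\v si\' c's classification to obtain a strongly upper-triangular basis, and conclude $\Tr(\dup)\cong\Z[q,\q]\b^+\cong\AdUp$ as $\Z[q,\q]$-modules (your Krull--Schmidt/cocenter computation is a re-derivation of \cite[Proposition 4.7]{BHZ}, quoted here as Proposition \ref{Sup} and packaged in Corollary \ref{TrUpH}). Where you genuinely diverge is the algebra structure. You transport it from the Grothendieck ring: $[\Id_b][\Id_{b'}]=[\Id_{b\circ b'}]=\sum_c m^c_{bb'}[\Id_c]$ with $m^c_{bb'}$ the multiplicities in the direct sum decomposition, which you then identify with the structure constants of the canonical basis. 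The paper instead tensors with $\Q(q)$, uses $\rE_i^a\onel=[a]!\,\rEia\onel$ to see that the $\rE_i\onel$ generate, and verifies the one defining relation of $\dUp$ (the quantum Serre relation) by a direct diagrammatic computation in the trace (Lemma \ref{rel1}), before descending back to $\Z[q,\q]$. Your route avoids all diagrammatics, but note that the step ``strong upper-triangularity ensures \dots\ the $m^c_{bb'}$ are exactly the structure constants of the canonical basis'' is not a formal consequence of upper-triangularity: unitriangularity constrains the shape of the decomposition but does not determine the multiplicities. What you actually need is the categorified form of \eqref{Lrel}, i.e.\ that $K_0(\dup)\cong\AdUp$ as an algebra with indecomposables corresponding to $\dB^+$; that is precisely the main theorem of \cite{Sto}, so your gap closes by citing it explicitly rather than by the upper-triangularity remark. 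The paper's route has the compensating advantage that Lemma \ref{rel1} is established independently and is reused in the proof of the full Theorem \ref{Final}, where no $K_0$-type shortcut is available.
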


Here, $\AdUp$ is the positive part of Lusztig's idempotented form of quantum $\sl_3$.
The strategy for the whole quantum $\sl_3$ uses Proposition \ref{mpp} proved in \cite{BHZ}:

\begin{prop*}
Let $\c$ be a linear category. Let $K\subset \H(\c):=\bigoplus_{x\in \Ob(\c)}\c(x,x)$ be a subgroup.
Assume that there is a linear map
 $\pi\colon \H(\c)\rightarrow K$  with the following properties:
\begin{enumerate}
\item $\pi$ is a projection,
\item $[\pi(f)]=[f]\in\Tr(\c)$ for every $f\in \H(\c)$ and
\item for every $g\in\c(x,y)$ and $h\in\c(y,x)$ ($x,y\in\Ob(\c)$), $\pi(gh)=\pi(hg)$ (trace property);
\end{enumerate}
then $\Tr(\c)$ is isomorphic to $K$.
\end{prop*}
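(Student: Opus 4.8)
The plan is to show that $\pi$ descends to a well-defined homomorphism on $\Tr(\c)$ and that this descended map is a two-sided inverse to the map induced by the inclusion $K \hookrightarrow \H(\c)$. The whole argument is a formal factorization, so I would organize it around the quotient presentation of the trace.

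First I would unpack the definition as $\Tr(\c) = \H(\c)/C$, where $C = \Span\{gh-hg\}$ is the subgroup generated by the commutators with $g\in\c(x,y)$, $h\in\c(y,x)$, and $[f]$ denotes the class of $f\in\H(\c)$. The trace property (3) asserts precisely that $\pi(gh-hg)=\pi(gh)-\pi(hg)=0$ for every such pair, hence $C\subseteq\ker\pi$. Therefore $\pi$ factors through the quotient, yielding a well-defined group homomorphism $\bar\pi\colon\Tr(\c)\to K$ determined by $\bar\pi([f])=\pi(f)$.

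In the other direction, the inclusion $K\hookrightarrow\H(\c)$ followed by the quotient map gives $j\colon K\to\Tr(\c)$, $k\mapsto[k]$. I would then verify that $\bar\pi$ and $j$ are mutually inverse. For $k\in K$: since $\pi$ is a projection with image $K$ (property (1)), it restricts to the identity on $K$ — any $k=\pi(y)\in\operatorname{im}\pi$ satisfies $\pi(k)=\pi^2(y)=\pi(y)=k$ — so $\bar\pi(j(k))=\pi(k)=k$, i.e. $\bar\pi\circ j=\Id_K$. For a class $[f]\in\Tr(\c)$: property (2) gives directly $j(\bar\pi([f]))=[\pi(f)]=[f]$, i.e. $j\circ\bar\pi=\Id_{\Tr(\c)}$. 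Consequently $\bar\pi$ is the desired isomorphism $\Tr(\c)\cong K$.

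There is no serious obstacle in this step, since everything follows immediately from the three hypotheses; the only points requiring a moment's care are confirming that property (3) kills \emph{all} of $C$ (so that $\bar\pi$ is well-defined) and that property (1) forces $\pi|_K=\Id_K$ (so that $\bar\pi\circ j=\Id_K$). The genuine difficulty of the overall theorem lies not here but in constructing such a projection $\pi$ for the case $\c=\du$ and verifying conditions (1)–(3) in that concrete setting.
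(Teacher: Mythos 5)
Your argument is correct and complete: condition (3) shows the commutator subgroup lies in $\ker\pi$ so that $\pi$ descends to $\bar\pi$ on $\Tr(\c)$, condition (1) gives $\bar\pi\circ j=\Id_K$, and condition (2) gives $j\circ\bar\pi=\Id_{\Tr(\c)}$. The paper itself does not reprove this statement (it imports it as Proposition 3.4 of \cite{BHZ}), and your factorization through the quotient is exactly the standard argument used there, so there is nothing to add.
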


The map $\pi$ in the Proposition project every element $f$ of the endomorphism space $H(\c)=\bigoplus_{x\in \Ob(\c)}\c(x,x)$ of a category $\c$ to the well defined representative of $[f]\in\Tr(\c)$.
In $\u$ we construct the map $\pi$ in Subsection \ref{Construct} by cutting a 2-morphism into a vertical composition $\vcs{f}{g}$ and replacing it with $\vcs{g}{f}$ many times, what can be seen as turning cylinder from \eqref{cylinder} around, thus ensuring that we remain in the same equivalence class of the trace. The goal is to separate the positive and the negative part of $\u$ and to use the results known for the positive part $\up$ (the negative part is isomorphic).

Although the construction of the projection $\pi$ in Subsection \ref{Construct} and the inductive proof of the trace property in Subsection \ref{TraceP} look complicated, they are essentially straightforward. We hope that a similar strategy can be used for the quantum groups $\sl_n$ for $n>3$.

\subsection*{Acknowledgements}
I am very grateful to my coauthors of the paper \cite{BHZ}, namely prof.\ Anna Beliakova who introduced me into the field of categorification, prof.\ Kazuo Habiro who suggested this problem and prof.\ Aaron Lauda who made useful suggestions for improving the draft of this paper. I would also like to thank Institute of Mathematics of the the University of Zurich for supporting me while I was doing this research. This work was partially supported by the Swiss National Science Foundation, grants PDAMP2\_137151, 200021\_150012 and the NCCR SwissMAP.

\section{Traces of linear categories and 2-categories}
In this section we recollect the definition and some properties of traces of linear categories and 2-categories as is done in \cite{BHZ}.

\subsection{Linear categories}
A {\em linear} category (also called $\Ab$-category or {\em
pre-additive} category) is a category enriched in the category $\Ab$
of abelian groups ($\Z$-modules). This means it is a category whose hom-sets are equipped with structures of abelian groups and the composition maps are bilinear (compare \cite[p. 276]{MacLane}).

A {\em linear functor} (also called {\em additive functor}) between
two linear categories $\c$ and $\d$ is a functor $F$ from $\c$ to $\d$ such that for $x,y\in \Ob(\c)$, the map $F\colon \c (x,y)\longrightarrow \d (F(x),F(y))$ is an abelian group homomorphism.

\begin{defn}
\label{Def}
The {\em endomorphism space} $\H(\c)$ of the linear category $\c$ is an abelian group defined as
\begin{equation}
 \H(\c) := \bigoplus_{x\in \Ob(\c)}\c(x,x).
\end{equation}

The {\em trace} $\Tr(\c)$ of the linear category $\c$ is the abelian group 
\begin{equation}
\label{Trdef}
 \Tr(\c) := \H(\c)/\Span\{fg-gf\},
\end{equation}
where $(f,g)$ run through all pairs of morphisms $f\colon
x\longrightarrow y$, $g\colon y\longrightarrow x$ with $x,y\in
\Ob(\c)$. The equivalence class $[f]\in\Tr(\c)$ of the morphism $f\in\c(x,x)$ for $x\in\Ob(\c)$ is called {\em the trace} of the morphism $f$.

Let $F\colon\c\to\d$ be a linear functor from linear category $\c$ to linear category $\d$. The {\em trace} $\Tr_F\colon\Tr(\c)\to\Tr(\d)$ is a morphism defined by
\begin{equation}
\Tr_F([f])=[F(f)]
\end{equation}
for $f\colon x\longrightarrow x$, $x\in\Ob(\c)$. It is easy to verify that the definition does not depend on the choice of $f$ representing $[f]$.
\end{defn}

The trace $\Tr$ gives a functor from the category of (small) linear
categories to the category of abelian groups.

Here we summarize some useful facts:
\begin{itemize}
\item For $f\colon x\to x$ and an isomorphism $a\colon y\simeqto x$, we have $[f]=[a^{-1}fa]$ in $\Tr(\c )$.
\item Equivalence of categories $\c \simeq \d $ induces an isomorphism $\Tr(\c )\cong\Tr(\d )$.
\item There is a natural map $h\colon\Ob(\c)\to\Tr(\c)$ defined by $h(x)=[\Id_x]$. The map $h$ is neither injective nor surjective in general.
\end{itemize}

\subsection{Homogeneously graded additive categories}
An {\em additive} category is a linear category equipped with
a zero object and biproducts, also called direct sums (compare \cite[p. 196]{MacLane}).

\begin{prop}[{\cite[Lemma 3.1]{BHZ}}]
If $\c$ is an additive category, then for $f\colon x\rightarrow x$ and $g\colon y\rightarrow y$, we have
  \begin{equation}
  [f\oplus g]=[f]+[g]
  \end{equation}
in $\Tr(\c)$.
\end{prop}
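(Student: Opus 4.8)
The plan is to realize $f \oplus g$ as an endomorphism of the biproduct $x \oplus y$ expressed through the canonical structure maps, and then to peel off each summand using the defining trace relation $[ab]=[ba]$. Since $\c$ is additive, the biproduct $x \oplus y$ comes equipped with inclusions $i_x\colon x \to x\oplus y$, $i_y\colon y \to x \oplus y$ and projections $p_x\colon x \oplus y \to x$, $p_y \colon x \oplus y \to y$ satisfying $p_x i_x = \Id_x$, $p_y i_y = \Id_y$, $p_x i_y = 0$, $p_y i_x = 0$ and $i_x p_x + i_y p_y = \Id_{x \oplus y}$. With respect to these maps the induced endomorphism decomposes as
\begin{equation*}
f \oplus g = i_x f p_x + i_y g p_y.
\end{equation*}

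First I would use that the quotient map $[\,\cdot\,]\colon \H(\c) \to \Tr(\c)$ is a homomorphism of abelian groups (the trace is $\H(\c)$ modulo a subgroup), so that
\begin{equation*}
[f \oplus g] = [i_x f p_x] + [i_y g p_y].
\end{equation*}
Next I would treat each summand with the trace relation. Writing $i_x f p_x = (i_x f)\, p_x$ as a composite of $i_x f\colon x \to x \oplus y$ with $p_x\colon x\oplus y \to x$, the relation $[ab]=[ba]$ gives
\begin{equation*}
[i_x f p_x] = [p_x i_x f] = [\Id_x f] = [f],
\end{equation*}
and symmetrically $[i_y g p_y] = [g]$. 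Combining the two displays yields $[f \oplus g] = [f] + [g]$, as claimed.

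The argument is essentially bookkeeping, so the only points demanding care are the conventions: one must split each summand so that the ``inner'' composite reproduces the original endomorphism of $x$ (resp.\ $y$) after cancelling $p_x i_x = \Id_x$ (resp.\ $p_y i_y = \Id_y$), which fixes the direction in which the trace relation is applied. The one structural input, beyond the biproduct identities, is that $f \oplus g$ really equals $i_x f p_x + i_y g p_y$; this is the standard description of the functorial action of $\oplus$ on morphisms in an additive category, obtained by composing with the defining maps of the biproduct and using the completeness relation $i_x p_x + i_y p_y = \Id_{x \oplus y}$. I expect no genuine obstacle to arise.
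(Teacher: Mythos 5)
Your proof is correct and is essentially the standard argument for this statement (which the paper only cites from \cite{BHZ} rather than proving): decompose $f\oplus g = i_xfp_x + i_ygp_y$ via the biproduct structure maps, use additivity of the quotient map, and apply the relation $[ab]=[ba]$ to cancel $p_xi_x=\Id_x$ and $p_yi_y=\Id_y$. No gaps.
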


A {\em Homogeneously graded} additive category is an additive category $\c$ such that $\Ob(\c)=\bigoplus_{i\in\Z}\Ob_i(\c)$ and there exists an invertible functor $q\colon\c\to\c$ such that $q(\Ob_i(\c))=\Ob_{i+1}(\c)$. The functor $q$ is called \emph{degree shift}. A morphism $f\in\c(x,y)$ where $x\in\Ob_i(\c)$, $y\in\Ob_j(\c)$ is said to have \emph{degree} $j-i$.
By composing the addition and degree shift operations we can define a functor $p\colon\c\to\c$ for every $p\in\Z^+[q,\q]$.

If $\c$ is a homogeneously graded additive category, note that endomorphisms are linear combinations of 0-degree morphisms and that $\H(\c)=\bigoplus_{i\in\Z}\H(\c_i)$ where $\c_i$ is the full subcategory of $\c$ with objects $\Ob_i(\c)$. Also holds $\Tr(\c)=\bigoplus_{i\in\Z}\Tr(\c_i)$.

The degree shift $q$ induces an isomorphism $\H(\c)\to\H(\c)$ which maps $\H(\c_i)$ to $\H(\c_{i+1})$. It can be seen as the multiplication with the variable $q$, so $\H(\c)$ has a natural structure of $\Z[q,\q]$-module. Therefore, $\Tr(\c)$ has a natural structure of $\Z[q,\q]$-module too, induced by the trace of the degree shift $\Tr_q$. Clearly, $[p(f)]=p[f]$ for $p\in\Z^+[q,\q]$.

%\subsection{Additive closure}
%For a linear category $\c$, there is a universal additive category generated by $\c$, called the {\em additive closure} $\c^{\oplus}$, in which the objects are formal finite direct sums of objects in $\c$ and the morphisms are matrices of morphisms in $\c$.  There is a canonical fully faithful functor $i\colon \c\rightarrow \c^\oplus$. Every linear functor $F\colon \c\rightarrow \d$ from $\c$ to an additive category $\d$ factors through $i$ in an essentially unique way.

%\begin{prop}[{\cite[Subsection 3.4.]{BHZ}}]
%The homomorphism 
%\begin{equation}
%  \Tr(i)\colon \Tr(\c)\rightarrow \Tr(\c^\oplus)
%\end{equation} 
%induced by the canonical functor $i\colon \c \rightarrow \c ^{\oplus}$ is an isomorphism.
%\end{prop}

\subsection{Karoubi envelopes}
Let $\c$ be a linear category. An idempotent $f\colon x\rightarrow x$ in $\c$ is said to {\em split} if there is an object $y$ and morphisms $g\colon x\rightarrow y$, $h\colon y\rightarrow x$ such that $hg=f$ and $gh=1_y$.

The {\em Karoubi envelope} $\Kar(\c)$ (also called {\em idempotent} or {\em Cauchy completion}) of $\c$ is the  category whose objects are pairs $(x,e)$ of objects $x\in \Ob(\c)$ and an idempotent endomorphism $e\colon x\rightarrow x$, $e^2=e$, in $\c$ and whose morphisms are $(e,f,e') \colon (x,e)\rightarrow(y,e')$ where $f\colon x\rightarrow y$ in $\c$ is such that $f=e'fe$.  
Composition is induced by the composition in $\c$ and the identity morphism is $e\colon (x,e)\rightarrow(x,e)$.
$\Kar(\c)$ is equipped with a linear category structure.

There is a natural embedding functor $i\colon\c\rightarrow\Kar(\c)$ such that $i(x)=(x,1_x)$ for $x\in \Ob(\c)$ and $i(f\colon x \to y)=(\Id_x,f,\Id_y)$. The Karoubi envelope $\Kar(\c)$ has the universal property: if $F\colon \c \rightarrow \d $ is a linear functor to a linear category $\d$ with split idempotents, then $F$ extends to a functor from $\Kar(\c)$ to $\d$ uniquely up to natural epimorphism \cite[Proposition 6.5.9]{Bor}.

It is easy to see that the Karoubi envelope of an additive category
is additive.

\begin{prop}[{\cite[Proposition 3.2.]{BHZ}}]
\label{Kar}
The map $\Tr(i)\colon \Tr(\c)\longrightarrow \Tr(\Kar(\c))$ induced by $i$ is an isomorphism.
\end{prop}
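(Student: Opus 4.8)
The plan is to exhibit an explicit two-sided inverse to $\Tr(i)$ rather than to study $\Tr(i)$ directly. Recall that an object of $\Kar(\c)$ is a pair $(x,e)$ with $e\colon x\to x$ idempotent, and an endomorphism of $(x,e)$ is a triple $(e,f,e)$ with $f\colon x\to x$ and $f=efe$. I would define a map $r$ on the generators of $\H(\Kar(\c))$ by forgetting the idempotent frame, i.e.\ $(e,f,e)\mapsto[f]\in\Tr(\c)$, and extend it linearly to a homomorphism $\H(\Kar(\c))\to\Tr(\c)$; this is well defined on $\H(\Kar(\c))$ since the triple $(e,f,e)$ determines $f$.

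The heart of the argument is to check that $r$ descends to $\Tr(\Kar(\c))$, that is, that it annihilates the trace relations of $\Kar(\c)$. Take morphisms $g\colon(x,e)\to(y,e')$ and $h\colon(y,e')\to(x,e)$ with underlying arrows $a\colon x\to y$ and $b\colon y\to x$ in $\c$, so that $a=e'ae$ and $b=ebe'$. Using these identities one checks that $g\circ h=(e',ab,e')$ and $h\circ g=(e,ba,e)$, whence $r(g\circ h)=[ab]$ and $r(h\circ g)=[ba]$ in $\Tr(\c)$. Since $a\colon x\to y$ and $b\colon y\to x$ are honest morphisms of $\c$, the equality $[ab]=[ba]$ is precisely a defining relation of $\Tr(\c)$, so $r(g\circ h)=r(h\circ g)$ and $r$ passes to $\Tr(\Kar(\c))$.

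Finally I would check that $r$ is two-sided inverse to $\Tr(i)$. The composite $r\circ\Tr(i)$ is the identity because $\Tr(i)([f])=[(1_x,f,1_x)]$ and $r$ sends this to $[f]$. For $\Tr(i)\circ r$ I would show directly that every generator $[(e,f,e)]$ is already in the image of $\Tr(i)$: setting $g=(1_x,f,e)$ and $h=(e,e,1_x)$ (well defined because $f=efe$ gives $ef=f$, and $e^2=e$), one has $g\circ h=(e,f,e)$ and $h\circ g=(1_x,f,1_x)=i(f)$, so $[(e,f,e)]=[i(f)]=\Tr(i)([f])$. This yields $\Tr(i)\circ r=\Id$ and surjectivity of $\Tr(i)$ at once.

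I expect the only delicate point to be the bookkeeping in the middle step: one must carry the source and target idempotents through each composition and apply the relations $f=efe$, $a=e'ae$, $b=ebe'$ to identify the underlying arrows of $g\circ h$ and $h\circ g$ as exactly $ab$ and $ba$ sitting in the correct frames $(e',-,e')$ and $(e,-,e)$. Once that identification is made, everything reduces to the trace property in $\c$, and no additional structure of the Karoubi envelope is required.
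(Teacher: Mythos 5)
Your proof is correct, and since the paper itself gives no proof of this proposition but only cites \cite[Proposition 3.2]{BHZ}, the comparison is with that reference: your argument --- constructing the explicit inverse $[(e,f,e)]\mapsto[f]$, checking it kills the trace relations via $r(g\circ h)=[ab]=[ba]=r(h\circ g)$, and exhibiting $[(e,f,e)]=[i(f)]$ through the splitting $g=(1_x,f,e)$, $h=(e,e,1_x)$ --- is precisely the standard proof given there. No gaps; the bookkeeping with $f=efe$, $a=e'ae$, $b=ebe'$ is carried out correctly.
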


\subsection{Tools for computing the trace}
In this subsection we collect few results which will be needed later 
in the proofs.

\begin{prop}[{\cite[Proposition 3.3.]{BHZ}}]
\label{Dir}
Let $\c$ be a linear category and $B$ a subset of $\Ob(\c)$ such that every object $x$ of $\c$ is isomorphic to direct sum of elements of $B$. Let $\c|_B$ be the full
subcategory of $\c$ with $\Ob(\c|_B)=B$.
Then $\Tr(\c)$ is isomorphic to $\Tr(\c|_B)$.
\end{prop}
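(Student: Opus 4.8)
The plan is to show that the inclusion functor $\iota\colon\c|_B\to\c$ induces an isomorphism $\Tr_\iota\colon\Tr(\c|_B)\to\Tr(\c)$, by producing an explicit two-sided inverse built from a ``matrix trace'' formula. First I would reduce to a category with canonical biproduct structure. Since every object of $\c$ is isomorphic to a finite direct sum of objects of $B$ and $\c|_B$ is full, the additive (matrix) envelope $\operatorname{Mat}(\c|_B)$---whose objects are formal finite direct sums $\bigoplus_l X_l$ with $X_l\in B$ and whose morphisms are matrices of $\c|_B$-morphisms---maps to $\c$ by sending a formal sum to the actual biproduct in $\c$. This functor is fully faithful (hom-sets on both sides are matrices of $\c|_B=\c$-morphisms, by the universal property of the biproduct) and essentially surjective (by hypothesis), hence an equivalence. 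By the fact that equivalences induce isomorphisms of traces, $\Tr(\c)\cong\Tr(\operatorname{Mat}(\c|_B))$, so it suffices to prove $\Tr(\operatorname{Mat}(\c|_B))\cong\Tr(\c|_B)$.

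Second, the computational heart is a cyclicity argument. Write $X=\bigoplus_k X_k$ with structure maps $\iota_k$, $p_k$ satisfying $p_k\iota_l=\delta_{kl}\Id$ and $\sum_k\iota_kp_k=\Id_X$. For an endomorphism $g$ with entries $g_{kl}=p_kg\iota_l$ I would expand $g=\sum_{k,l}\iota_kg_{kl}p_l$ and apply the defining relation $[ab]=[ba]$ of the trace to each summand with $a=\iota_kg_{kl}$ and $b=p_l$: since $p_l\iota_k=\delta_{lk}\Id$, this gives $[\iota_kg_{kl}p_l]=[p_l\iota_kg_{kl}]=[\delta_{lk}g_{kl}]$, so the off-diagonal terms vanish and $[g]=\sum_k[g_{kk}]$ in $\Tr(\operatorname{Mat}(\c|_B))$. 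As each $g_{kk}$ is an endomorphism of $X_k\in B$, this already shows $\Tr_\iota$ is surjective.

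Third, I would define the candidate inverse $\tau\colon\H(\operatorname{Mat}(\c|_B))\to\Tr(\c|_B)$ by $\tau(g)=\sum_k[g_{kk}]$ and check that it descends to $\Tr$. It is clearly linear; the point is that it kills commutators. For $g\colon X\to Y$ and $h\colon Y\to X$, inserting $\sum_l\iota_l^Xp_l^X=\Id$ gives $(gh)_{mm}=\sum_l g_{ml}h_{lm}$ and $(hg)_{ll}=\sum_m h_{lm}g_{ml}$, so $\tau(gh)=\sum_{m,l}[g_{ml}h_{lm}]$ and $\tau(hg)=\sum_{l,m}[h_{lm}g_{ml}]$; these agree because $g_{ml}$ and $h_{lm}$ are morphisms of $\c|_B$ and $[g_{ml}h_{lm}]=[h_{lm}g_{ml}]$ is precisely a defining relation of $\Tr(\c|_B)$. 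Hence $\tau$ factors through some $\bar\tau\colon\Tr(\operatorname{Mat}(\c|_B))\to\Tr(\c|_B)$. Finally $\bar\tau\circ\Tr_\iota=\Id$ (a $B$-object is its own $1\times1$ matrix) and $\Tr_\iota\circ\bar\tau=\Id$ (the formula $[g]=\sum_k[g_{kk}]$ from the previous step), so $\Tr_\iota$ is the desired isomorphism.

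The only genuinely delicate point is the reduction to $\operatorname{Mat}(\c|_B)$: working directly in $\c$ one would have to choose a biproduct decomposition of every object and then verify that $\tau$ is independent of these choices, which is awkward. Passing to the matrix envelope makes the decomposition canonical and reduces everything to the bookkeeping above, and the equivalence-invariance of $\Tr$ transports the conclusion back to $\c$. Everything else is a routine application of the cyclicity relation $[ab]=[ba]$.
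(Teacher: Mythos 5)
Your proof is correct. The paper does not actually prove this proposition---it imports it from \cite[Proposition 3.3]{BHZ}---and your argument (pass to the matrix envelope $\operatorname{Mat}(\c|_B)$, use the cyclicity relation $[ab]=[ba]$ with $a=\iota_k g_{kl}$, $b=p_l$ to get $[g]=\sum_k[g_{kk}]$, and check that the matrix-trace map $\tau$ descends and is a two-sided inverse to $\Tr_\iota$) is the standard one and agrees in substance with the cited proof. The only implicit point is that the biproducts needed to define your functor $\operatorname{Mat}(\c|_B)\to\c$ must exist in $\c$, but this is already presupposed by the hypothesis of the statement and holds in every application in the paper, where the hom-categories are additive.
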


\begin{prop}[{\cite[Proposition 3.4.]{BHZ}}]
\label{mpp}
Let $\c$ be a linear category. Let $K\subset \H(\c)$ be a subgroup.
Assume that there is a linear map
 $\pi\colon \H(\c)\rightarrow K$  with the following properties:
\begin{enumerate}
\item $\pi$ is a projection,
\item $[\pi(f)]=[f]\in\Tr(\c)$ for every $f\in \H(\c)$ and
\item for every $g\in\c(x,y)$ and $h\in\c(y,x)$ ($x,y\in\Ob(\c)$), $\pi(gh)=\pi(hg)$ (trace property);
\end{enumerate}
then $\Tr(\c)$ is isomorphic to $K$.
\end{prop}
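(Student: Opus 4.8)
The plan is to produce mutually inverse group homomorphisms between $\Tr(\c)$ and $K$. Write $N := \Span\{gh-hg\}\subset\H(\c)$ for the subgroup by which we quotient, so that $\Tr(\c)=\H(\c)/N$ and the quotient map sends $f\mapsto[f]$. I would define one map $\iota\colon K\to\Tr(\c)$ simply as the restriction to $K$ of this quotient, namely $\iota(k)=[k]$; it is clearly a homomorphism. In the opposite direction I would aim to define $\bar\pi\colon\Tr(\c)\to K$ by the formula $\bar\pi([f])=\pi(f)$.

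The one substantive point is that $\bar\pi$ is well defined, i.e.\ that $\pi$ factors through the quotient $\H(\c)\to\Tr(\c)$. Since $\pi$ is linear, this amounts to showing that $\pi$ vanishes on each generator $gh-hg$ of $N$ (with $g\in\c(x,y)$ and $h\in\c(y,x)$), and that is exactly the trace property~(3): $\pi(gh)=\pi(hg)$. Hence $\pi$ descends to the claimed map $\bar\pi$.

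With both maps in hand, I would verify that the two composites are identities. For $k\in K$ we have $\bar\pi(\iota(k))=\pi(k)=k$, where the last equality uses that $\pi$ is a projection onto $K$ (condition~(1)), so that it restricts to the identity on its image $K$; thus $\bar\pi\circ\iota=\Id_K$. For a class $[f]\in\Tr(\c)$ we have $\iota(\bar\pi([f]))=[\pi(f)]=[f]$, the last equality being condition~(2); thus $\iota\circ\bar\pi=\Id_{\Tr(\c)}$. Therefore $\iota$ and $\bar\pi$ are inverse isomorphisms and $\Tr(\c)\cong K$.

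I do not expect any serious obstacle here: once the two maps are written down the verification is immediate, and each hypothesis is used exactly once—condition~(3) for well-definedness of $\bar\pi$, condition~(1) for one composite, and condition~(2) for the other. The only place demanding attention is the well-definedness in the second paragraph, and that is precisely what the trace property is designed to guarantee.
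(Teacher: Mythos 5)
Your proof is correct: the paper itself gives no proof of this proposition but simply cites \cite{BHZ}, and your argument (descend $\pi$ to $\Tr(\c)$ via the trace property, then check the two composites using conditions (1) and (2)) is exactly the standard argument the cited source uses. Nothing further is needed.
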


%\subsection{Strongly upper-triangular basis}
\begin{defn}
A linear category $\c$ is said to be {\em upper-triangular} if 
there is a partial order $\leq$ on $\Ob(\c)$ such that $\c(x,y)\neq 0$ implies $x\leq y$.

A {\em strongly upper-triangular} linear category is an upper-triangular linear category $\c$ such that for all $x\in \Ob(\c)$, we have $\c (x,x)\cong \Z$.
\end{defn}

\begin{defn}
Let $\c$ be an additive category, and let $B\subset \Ob(\c)$.
Denote by $\c|_B$ the full subcategory of $\c$ with
$\Ob(\c|_B)=B$.

The set $B$ is called a {\em strongly upper-triangular basis} of
$\c$ if the following two conditions hold.
\begin{enumerate}
\item Every object $x\in\Ob(\c)$ is isomorphic to a direct sum of the elements from $B$.
\item $\c|_B$ is strongly upper-triangular.
\end{enumerate}
\end{defn}

\begin{prop}[{\cite[Proposition 4.7.]{BHZ}}]
\label{Sup}
Let $\c$ be an additive category with a strongly upper-triangular basis $B$. Then
  \begin{equation}
    \Tr\c \cong \Z B.
  \end{equation}
\end{prop}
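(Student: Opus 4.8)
The plan is to reduce to the full subcategory $\c|_B$ and then show that no nontrivial commutator relations survive there. First I would apply Proposition \ref{Dir}: since every object of $\c$ is isomorphic to a direct sum of elements of $B$, the inclusion $\c|_B\hookrightarrow\c$ induces an isomorphism $\Tr(\c)\cong\Tr(\c|_B)$. This lets me replace $\c$ by the strongly upper-triangular category $\c|_B$, so that it suffices to prove $\Tr(\c|_B)\cong\Z B$.

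Next I would compute the endomorphism space explicitly. By definition $\H(\c|_B)=\bigoplus_{x\in B}\c(x,x)$, and the strong upper-triangularity hypothesis gives $\c(x,x)\cong\Z$ for every $x\in B$. Since $\c(x,x)$ is a unital ring whose underlying abelian group is $\Z$, its unit $\Id_x$ must be a generator, so $\c(x,x)=\Z\cdot\Id_x$ and hence $\H(\c|_B)=\bigoplus_{x\in B}\Z\,\Id_x\cong\Z B$ via the assignment $x\mapsto[\Id_x]$.

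The heart of the argument is then to show that the commutator subgroup $\Span\{fg-gf\}$ is zero inside $\H(\c|_B)$, where $f\colon x\to y$ and $g\colon y\to x$ range over objects of $B$. I would split into two cases. If $x=y$, then $fg$ and $gf$ both lie in the commutative ring $\c(x,x)\cong\Z$, so $fg-gf=0$. If $x\neq y$, then a nonzero $fg-gf$ would require both $f\neq 0$ and $g\neq 0$, forcing $\c(x,y)\neq 0$ and $\c(y,x)\neq 0$; by upper-triangularity this yields $x\leq y$ and $y\leq x$, and antisymmetry of the partial order forces $x=y$, a contradiction. Hence every generator $fg-gf$ vanishes, the quotient defining the trace is trivial, and $\Tr(\c|_B)=\H(\c|_B)\cong\Z B$. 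Combined with the first step, this gives $\Tr(\c)\cong\Z B$.

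I expect the only genuinely delicate point to be confirming that antisymmetry of the partial order, together with commutativity of $\c(x,x)\cong\Z$, is exactly what is needed to annihilate all commutators simultaneously; everything else is a direct unwinding of the definitions of $\H$ and $\Tr$ and the reduction supplied by Proposition \ref{Dir}.
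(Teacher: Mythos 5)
Your proof is correct and takes essentially the same route as the cited source (the paper itself only quotes \cite[Proposition 4.7]{BHZ} without reproducing the argument): reduce to $\c|_B$ via Proposition~\ref{Dir}, then observe that antisymmetry of the partial order annihilates every commutator $fg-gf$ with $x\neq y$ (one of $\c(x,y)$, $\c(y,x)$ must vanish), while commutativity of $\c(x,x)\cong\Z$ annihilates those with $x=y$. The auxiliary fact you invoke, that $\Id_x$ generates $\c(x,x)\cong\Z$, is indeed valid, since the unit of any ring structure on the abelian group $\Z$ is necessarily $\pm 1$ times a generator.
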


\subsection{2-categories}
Recall that {\em 2-category} $\C$ if given by
\begin{itemize}
\item a set $\Ob(\C)$ of {\em objects} in $\C$,
\item a category $\C(x,y)$ for $x,y\in \Ob(\C)$ (called hom category),
\item an object $I_x\in \C(x,x)$ for $x\in \Ob(\C)$ and
\item a functor $\circ\colon \C(y,z)\times \C(x,y)\rightarrow \C(x,z)$ for
  $x,y,z\in \Ob(\C)$
\end{itemize}
satisfying associativity and identity axioms. For more details see \cite[Chapter 7]{Bor}. 

A {\em linear 2-category} is a 2-category $\C$ whose hom categories are linear categories, and its linear structure is compatible to the structure of 2-category.

Similarly, an {\em additive 2-category} is a linear 2-category $\C$ whose hom categories are additive categories, and its additive structure is compatible to the structure of 2-category.

%such that 
%\begin{itemize}
%\item for $x,y\in \Ob(\c)$, the category $\c(x,y)$ is equipped with a structure of a linear category,
%\item for $x,y,z\in \Ob(\c)$, the functor $\circ\colon \c(y,z)\times \c(x,y)\rightarrow \c(x,z)$ is ``bilinear'' in the sense that the functors $-\circ f\colon \c(y,z)\rightarrow \c(x,z)$ for $f\in \Ob(\c(x,y))$ and $g\circ -\colon \c(x,y)\rightarrow \c(x,z)$ for $g\in \Ob(\c(y,z))$ are linear functors.
%\end{itemize}

%An {\em additive 2-category} is a linear 2-category $\c$ such that 
%\begin{itemize}
%\item for $x,y\in \Ob(\c)$, the category $\c(x,y)$ is equipped with a structure of an additive category,
  
%AAAAAAAAAAAAA
  
%\item for $x,y,z\in \Ob(\c)$, the functor $\circ\colon \c(y,z)\times \c(x,y)\rightarrow \c(x,z)$ is ``bilinear'' in the sense that the functors $-\circ f\colon \c(y,z)\rightarrow \c(x,z)$ for $f\in \Ob(\c(x,y))$ and $g\circ -\colon \c(x,y)\rightarrow \c(x,z)$ for $g\in \Ob(\c(y,z))$ are linear functors.
%\end{itemize}

A {\em homogeneously graded additive 2-category} is an additive 2-category $\C$ such that the hom categories are homogeneously graded additive categories.

\begin{defn}
%The {\em endomorphism space} $\H(\C)$ of the linear 2-category $\C$ is an abelian group defined by
%\begin{equation}
% \H(\C) := \bigoplus_{x,y\in\Ob(\C)}\H(\C(x,y)).
%\end{equation}
%
The {\em trace} $\Tr(\C)$ of the linear 2-category $\C$ is the linear 1-category with same objects as $\C$ and morphisms $\Tr(\C)(x,y)$ being traces of categories $\Tr(\C(x,y))$ for $x,y\in\Ob(\C)$. The composition is defined as:
\begin{equation}
[f][g]=[f\circ g]
\end{equation}
for $f\in\C(x,y)$, $g\in\C(y,z)$, $x,y,z\in\Ob(\C)$. It is easy to verify that the definition does not depend on the choice of $f$ in $[f]$ and $g$ in $[g]$.
\end{defn}

\section{Quantum $\sl_3$}

In this section we define quantum $\sl_3$ and its positive part. The definitions are based on general definitions from \cite{KL3}.

\subsection{Conventional ${\bf U}_q(\sl_3)$}
The algebra ${\bf U}_q(\sl_3)$ is the associative algebra with unit over $\Q(q)$ generated by $E_i$, $F_i$ and $K_i^{\pm 1}$ for $i\in \I$, with defining relations
\begin{equation}
K_iK_i^{-1} = K_i^{-1}K_i=1, \quad K_iK_j = K_jK_i,
\end{equation}
\begin{equation}
K_iE_jK_i^{-1} = q^{i\cdot j}E_j, \quad K_iF_jK_i^{-1} = q^{i\cdot j}F_j,
\end{equation}
\begin{equation}
E_iF_j-F_jE_i = \delta_{i,j}\frac{K_i-K_i^{-1}}{q-\q},
\end{equation}
\begin{equation}
E_i^2E_j-(q+\q)E_iE_jE_i+E_jE_i^2 = 0\quad \text{ if } j\neq i.
\end{equation}
\begin{equation}
F_i^2F_j-(q+\q)F_iF_jF_i+F_jF_i^2 = 0\quad \text{ if } j\neq i.
\end{equation}
where on $\I$ we define the operation $\cdot$ by putting $i\cdot i=2$ and $i\cdot j=-1$ if $j\neq i$.
For simplicity the algebra ${\bf U}_q(\sl_3)$ is written $\U$.

Define the quantum integer $[a]:=\frac{q^a-q^{-a}}{q-q^{-1}}$. The quantum factorial is then $[a]!:=[a][a-1]\dots[1]$, and the quantum binomial coefficient $\left[ \begin{array}{c}a\\b\end{array} \right] := \frac{[a]!}{[b]![a-b]!}$ for $0\leq a \leq b$. For $a\geq 0$, $i\in \I$ we define the divided powers
\begin{equation}
\label{div}
\Eia:=\frac{E_i^a}{[a]!}, \quad
\Fia:=\frac{F_i^a}{[a]!}.
\end{equation}

Let $\AU$ be the $\Z[q,\q]$-subalgebra of $\U$ generated by
\begin{equation*}
\left\{ \Eia,\Fia,K_i^{\pm 1}|a\in\Z_+, i\in \I \right\}.
\end{equation*}

\subsection{Lusztig's quantum $\sl_3$}
The $\Q(q)$-algebra $\dU$ is obtained from $\U$ by replacing the unit with a collection of orthogonal idempotents $1_\lambda$ for $\lambda = (\lambda_1,\lambda_2) \in \X$, i.e.\
\begin{equation*}
1_\lambda 1_\mu = \delta_{\lambda,\mu}1_\lambda,
\end{equation*}
such that
\begin{equation*}
K_i1_\lambda = 1_\lambda K_i = q^{\lambda_i}i_\lambda, \quad, E_i1_\lambda = 1_{\lambda + i'}E_i, \quad F_i1_\lambda = 1_{\lambda - i'}F_i,
\end{equation*}
where
\begin{equation}
\label{39}
1' = (2,-1), \quad 
2' = (-1,2).
\end{equation}
More precisely, $\dU$ is the associative algebra (without unit) over $\Q(q)$ generated by $1_\lambda$, $E_i1_\lambda$ and $F_i1_\lambda$ for $i\in \I$ and $\lambda \in  \X$, with defining relations
\begin{equation}
\label{relI}
1_\lambda 1_\mu = \delta_{\lambda,\mu}1_\lambda,
\end{equation}
\begin{equation}
\label{relII}
E_i1_\lambda = 1_{\lambda + i'}E_i, \quad F_i1_\lambda = 1_{\lambda - i'}F_i,
\end{equation}
\begin{equation}
\label{EFsw}
E_iF_j1_\lambda-F_jE_i1_\lambda = \delta_{i,j}[\lambda_i]1_\lambda,
\end{equation}
\begin{equation}
\label{relEE}
E_i^2E_j1_\lambda-(q+\q)E_iE_jE_i1_\lambda+E_jE_i^21_\lambda = 0 \quad \text{ if } j\neq i.
\end{equation}
\begin{equation}
\label{relFF}
F_i^2F_j1_\lambda-(q+\q)F_iF_jF_i1_\lambda+F_jF_i^21_\lambda = 0 \quad \text{ if } j\neq i.
\end{equation}

Similarly, let $\AdU$ be $\Z[q,\q]$-subalgebra of $\dU$ generated by
\begin{equation*}
\left\{ \Eia 1_\lambda,\Fia 1_\lambda| a\in\Z_+;\, i\in\I;\, \lambda\in \X \right\}.
\end{equation*}

There are direct sum decompositions of algebras
\begin{equation}
\label{DSDL}
\dU = \bigoplus_{\lambda,\mu\in \X}1_\mu \dU 1_\lambda, \quad \quad
\AdU = \bigoplus_{\lambda,\mu\in \X}1_\mu (\AdU) 1_\lambda.
\end{equation}

The algebra $\dU$ does not have a unit since the infinite sum $\sum_{\lambda\in \X}1_\lambda$ is not an element in $\dU$; however, the system of idempotents $\{1_\lambda|\lambda\in \X\}$ in some sense serves as a substitute for a unit. Algebras with system of idempotents have a natural interpretation as linear categories. In this interpretation, $\dU$ is a category with one object $\lambda$ for each $\lambda\in \X$ with homs from $\lambda$ to $\mu$ given by the abelian group $1_\mu\dU 1_\lambda$. The idempotents $1_\lambda$ are the identity morphisms for this category and composition is given by algebra structure of $\dU$. A similar interpretation of algebra $\AdU$ as a linear category also holds.

The following identities hold in $\dU$ and $\AdU$:
\begin{equation}
 E_i^{(a)}E_i^{(b)}1_\lambda=\left[ \begin{array}{c}a+b\\b\end{array}\right]E_i^{(a+b)}1_\lambda,
\end{equation}
\begin{equation}
 F_i^{(a)}F_i^{(b)}1_\lambda=\left[ \begin{array}{c}a+b\\b\end{array}\right]F_i^{(a+b)}1_\lambda,
\end{equation}
\begin{equation}
\label{EFsw1}
 E_i^{(a)}F_i^{(b)}1_\lambda=\sum_{j=0}^{\min(a,b)}\left[ \begin{array}{c}a-b+\lambda_i\\j\end{array}\right]F_i^{(b-j)}E_i^{(a-j)}1_\lambda,\quad\text{for }\lambda_i\geq b-a,
\end{equation}
\begin{equation}
\label{EFsw2}
 F_i^{(b)}E_i^{(a)}1_\lambda=\sum_{j=0}^{\min(a,b)}\left[ \begin{array}{c}b-a-\lambda_i\\j\end{array}\right]E_i^{(a-j)}F_i^{(b-j)}1_\lambda,\quad\text{for }\lambda_i\leq b-a,
\end{equation}
\begin{equation}
\label{EFsw3}
 E_i^{(a)}F_j^{(b)}1_\lambda=F_j^{(b)}E_i^{(a)}1_\lambda \quad\text{if }i\neq j
\end{equation}
for $i,j\in \I$, $\lambda\in \X$.

\subsection{Positive part of quantum $\sl_3$}
In this subsection we define the positive part of quantum $\sl_3$ and recall some results from \cite[Chapter 2]{Sto} needed in the proofs later. Everything holds for the negative part analogously.

The positive part $\Up$ of $\U$ is the subalgebra of $\U$ generated by $\{E_1,E_2\}$.

$\AUp$ is the $\Z[q,\q]$-subalgebra of $\Up$ (or, equivalently, subalgebra of $\AU$) generated by
\begin{equation*}
\left\{ E_1^{(a)},E_2^{(a)}|a\in\Z^+\right\}.
\end{equation*}

We can write Lusztig's canonical basis of $\AUp$ (and of $\Up$) explicitly (see \cite{Lus} or \cite{Lus2}):
\begin{equation*}
B^+:=\left\{E_1^{(a)}E_2^{(b)}E_1^{(c)},E_2^{(a)}E_1^{(b)}E_2^{(c)}|b\leq a+c;\, a,c\geq 0\right\}.
\end{equation*}
One of the remarkable properties of this basis is that its structure constants belong to $\N[q,\q]$, as shown in the next proposition

\begin{prop}[{\cite[Lemma 42.1.2.(d)]{Lus2}}]
For any three nonnegative integers $a,b,c$ with $b\leq a+c$
\begin{equation}
\label{Lrel}
E_1^{(a)}E_2^{(b)}E_1^{(c)} = \sum_{{p+r=b \atop p\leq c} \atop r\leq a}\left[\begin{array}{c}a+c-b\\c-p\end{array}\right]E_2^{(p)}E_1^{(a+c)}E_2^{(r)}.
\end{equation}
In particular, for $b=a+c$
\begin{equation}
\label{EqBase}
E_1^{(a)}E_2^{(a+c)}E_1^{(c)} = E_2^{(c)}E_1^{(a+c)}E_2^{(a)}.
\end{equation}
Finally, both formulas are valid when $E_1$ and $E_2$ interchange places.
\end{prop}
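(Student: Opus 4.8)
The plan is to reduce \eqref{Lrel} to two elementary ingredients and one induction. The ingredients are the divided-power product rule $E_i^{(m)}E_i^{(n)}=\left[\begin{array}{c}m+n\\n\end{array}\right]E_i^{(m+n)}$ and the divided-power form of the Serre relation \eqref{relEE}, namely
\begin{equation*}
E_1E_2E_1=E_1^{(2)}E_2+E_2E_1^{(2)},
\end{equation*}
obtained by dividing \eqref{relEE} by $[2]=q+\q$. I will also use two symmetries of $\Up$ that are visible in the defining relations: the automorphism swapping $E_1\leftrightarrow E_2$ (the Serre relations are symmetric in the two indices) and the anti-automorphism reversing words and fixing $E_1,E_2$ (the Serre relations are palindromic). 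The identity \eqref{Lrel} will be proved by induction on $c$. The base case $c=0$ is immediate: the constraints force $p=0$, $r=b$, the coefficient is $\left[\begin{array}{c}a-b\\0\end{array}\right]=1$, and the right-hand side is $E_1^{(a)}E_2^{(b)}$.

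First I would isolate the case $c=1$, which carries all the genuine algebra. Written out it is the single-strand commutation rule
\begin{equation*}
E_1^{(a)}E_2^{(b)}E_1=[a+1-b]\,E_1^{(a+1)}E_2^{(b)}+E_2E_1^{(a+1)}E_2^{(b-1)},\tag{$\star$}
\end{equation*}
with the convention that a divided power of negative exponent is $0$. For $b=1$ I would prove $(\star)$ by induction on $a$: the base $a=1$ is exactly the Serre relation above, and the inductive step multiplies the identity for $a-1$ on the left by $E_1$, rewrites $E_1E_1^{(a)}=[a+1]E_1^{(a+1)}$ and expands $E_1E_2E_1^{(a)}$ by the word-reversal mirror of $(\star)$, closing with the $q$-integer identity $[a]^2=[a-1][a+1]+1$. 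The general $b$ follows by a parallel induction that again reduces each step to the Serre relation and to elementary $q$-integer manipulations.

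With $(\star)$ available, the inductive step in $c$ is mechanical. I would write $E_1^{(c)}=[c]^{-1}E_1^{(c-1)}E_1$, apply the induction hypothesis to $E_1^{(a)}E_2^{(b)}E_1^{(c-1)}$, and then apply $(\star)$ to the middle sub-word $E_1^{(a+c-1)}E_2^{(r)}E_1$ of each resulting term $E_2^{(p)}E_1^{(a+c-1)}E_2^{(r)}E_1$, recollecting via $E_2^{(p)}E_2=[p+1]E_2^{(p+1)}$. Comparing the coefficient of $E_2^{(p)}E_1^{(a+c)}E_2^{(r)}$ (using $r=b-p$, so that $a+c-r=a+c-b+p$) on the two sides then reduces the whole claim to the single $q$-binomial identity
\begin{equation*}
[c]\left[\begin{array}{c}a+c-b\\c-p\end{array}\right]=[a+c-b+p]\left[\begin{array}{c}a+c-1-b\\c-1-p\end{array}\right]+[p]\left[\begin{array}{c}a+c-1-b\\c-p\end{array}\right],
\end{equation*}
which I would check directly from $[m]=\frac{q^m-q^{-m}}{q-\q}$ (it is a weighted $q$-Pascal relation). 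The specialization $b=a+c$ then makes $\left[\begin{array}{c}a+c-b\\c-p\end{array}\right]=\left[\begin{array}{c}0\\c-p\end{array}\right]$ vanish unless $p=c$ (whence $r=a$), collapsing \eqref{Lrel} to the single term $E_2^{(c)}E_1^{(a+c)}E_2^{(a)}$ and proving \eqref{EqBase}; validity after interchanging $E_1$ and $E_2$ is immediate from the swap automorphism.

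The main obstacle is not any one step but the uniform bookkeeping at the edges of the summation ranges. In the reduction above, the terms with $p\in\{0,c\}$ or $r\in\{0,a\}$ must be treated with the negative-exponent convention and with degenerate binomials, and one has to check that the coefficient-matching identity continues to hold there without separate exceptions; likewise the proof of $(\star)$ for general $b$ requires keeping the ranges of validity ($b\le a+1$) consistent through the induction. Once these boundary cases are organized, the argument is entirely routine.
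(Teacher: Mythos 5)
The paper does not actually prove this proposition: it is imported verbatim from Lusztig's book (Lemma 42.1.2(d) of \cite{Lus2}), so any self-contained argument is already more than the paper offers. Your skeleton --- reduce everything to the single-commutation rule $(\star)$ and then induct on $c$ --- is the standard route, and the parts you spell out are correct. In particular your coefficient identity
\begin{equation*}
[c]\left[\begin{array}{c}a+c-b\\c-p\end{array}\right]=[a+c-b+p]\left[\begin{array}{c}a+c-1-b\\c-1-p\end{array}\right]+[p]\left[\begin{array}{c}a+c-1-b\\c-p\end{array}\right]
\end{equation*}
does hold: it follows from $[k]\left[\begin{array}{c}N\\k\end{array}\right]=[N]\left[\begin{array}{c}N-1\\k-1\end{array}\right]$, the expansion $[m+n]=q^{m}[n]+q^{-n}[m]$, and the $q$-Pascal rule, and the boundary terms are handled by your negative-exponent convention. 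The specialization to \eqref{EqBase} and the swap symmetry are likewise fine.

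The gap is in $(\star)$ itself, which carries all the content of the lemma and which you dispose of in one sentence for general $b$. Two concrete issues. First, in the $b=1$ induction you expand $E_1E_2E_1^{(a)}$ ``by the word-reversal mirror of $(\star)$''; the mirror at level $a$ is logically equivalent to the statement at level $a$ that you are in the middle of proving, so as written the step is circular. It can be repaired --- use the mirror at level $a-1$, multiply on the right by $E_1$, and close with $[a]^2=[a-1][a+1]+1$ --- but this must be said. Second, for general $b$ the ``parallel induction'' is not parallel: the induction that actually works is on $a$ at fixed $b$ (multiply the identity for $a-1$ on the left by $E_1$ and use $[a][a+1-b]=[a-b][a+1]+[b]$), but the admissible range is $a\geq b-1$, so this induction bottoms out at the extreme case
\begin{equation*}
E_1^{(b-1)}E_2^{(b)}E_1=E_2E_1^{(b)}E_2^{(b-1)},
\end{equation*}
which is precisely an instance of \eqref{EqBase} --- the statement you intended to deduce as a corollary --- and needs its own separate induction on $b$ from the Serre relation. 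None of this is unfixable, but as it stands the one genuinely non-routine ingredient of the proof is asserted rather than proved.
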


The positive part $\dUp$ of $\dU$ is subalgebra of $\dU$ generated by $E_11_\lambda$, $E_21_\lambda$ for $\lambda\in \X$. Similarly, the positive part $\AdUp$ of $\AdU$ is subalgebra of $\AdU$ generated by $E_1^{(a)}1_\lambda$, $E_2^{(a)}1_\lambda$ for $a\in\Z^+$, $\lambda\in \X$.

There are direct sum decompositions of algebras
\begin{equation}
\dUp = \bigoplus_{\lambda,\mu\in \X}1_\mu \dUp 1_\lambda, \quad \quad
\AdUp = \bigoplus_{\lambda,\mu\in \X}1_\mu (\AdUp) 1_\lambda .
\end{equation}

Let
\begin{equation}
\label{LCBp}
\dB^+:=\left\{b1_\lambda|b\in B^+,\lambda\in \X\right\}.
\end{equation}
Clearly, $\dB^+$ is Lusztig's canonical basis of $\AdUp$ (and of $\dUp$). Relation \eqref{Lrel} holds if we add $1_\lambda$ on the right. For any $\lambda,\mu\in \X$ we define
\begin{equation}
_\mu \dB^+_\lambda:=\dB^+\cap 1_\mu \dUp 1_\lambda.
\end{equation}

Similarly we define $B^-$, $\dB^-$ and $_\mu \dB^-_\lambda$ for the negative part.

\subsection{The basis of $\AdU$}
\label{basis}

%There is an automorphism $\omega\colon\dU\to\dU$ defined on generators with
%\begin{equation}
%\omega(E_i1_\lambda)=F_i1_{-\lambda}, \quad\quad %\omega(F_i1_\lambda)=E_i1_{-\lambda}
%\end{equation}
%for $i\in I$, $\lambda\in \X$. Negative part of quantum $\sl_3$ is simply
%\begin{equation}
%\Um:=\omega(\Up), \quad \AUm:=\omega(\AUp), \quad \dUm:=\omega(\dUp), %\quad \AdUm:=\omega(\AdUp).
%\end{equation}
%The Lusztig's canonical basis of $\dUm$ (and $\AdUm$) is
%\begin{equation}
%B^-:=\omega(B^+).
%\end{equation}

Let
\begin{equation}
\dB:=\{b^-b^+|b^-\in \dB^-,b^+\in \dB^+\}-\{0\}\subset\AdU.
\end{equation}

\begin{thm}
$\dB$ is a basis of $\AdU$ as a $\Z[q,\q]$-module.
\end{thm}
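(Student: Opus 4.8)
The plan is to establish spanning and linear independence separately, after reducing to a single weight component via the decomposition \eqref{DSDL}: it suffices to prove that $\dB\cap 1_\mu(\AdU)1_\lambda$ is a $\Z[q,\q]$-basis of $1_\mu(\AdU)1_\lambda$ for each fixed $\lambda,\mu\in\X$. Before anything else I would pin down the shape of a nonzero element of $\dB$. Writing $b^-=c^-1_\nu$ and $b^+=c^+1_\lambda$ with $c^-\in B^-$ and $c^+\in B^+$, relations \eqref{relII} give $c^+1_\lambda=1_{\nu'}c^+1_\lambda$ for a unique $\nu'$ determined by $c^+$ and $\lambda$, whence $b^-b^+=c^-c^+1_\lambda$ if $\nu=\nu'$ and $b^-b^+=0$ otherwise. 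Thus, after discarding zeros, the elements of $\dB$ are exactly the $c^-c^+1_\lambda$ with $c^-\in B^-$, $c^+\in B^+$, $\lambda\in\X$ (the middle idempotent being forced), and those lying in $1_\mu(\AdU)1_\lambda$ are the ones whose combined weight equals $\mu-\lambda$.

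For spanning I would use the relations \eqref{EFsw1}, \eqref{EFsw2} and \eqref{EFsw3} as a straightening procedure. The component $1_\mu(\AdU)1_\lambda$ is generated as a $\Z[q,\q]$-module by products of generators $E_i^{(a)}1_\kappa$ and $F_i^{(a)}1_\kappa$. Whenever a divided power of $E$ precedes a divided power of $F$, \eqref{EFsw1}, \eqref{EFsw2} (for equal indices) or \eqref{EFsw3} (for distinct indices) rewrite the product as a $\Z[q,\q]$-combination of terms in which the $F$'s have moved further to the left, and in which the correction terms coming from \eqref{EFsw1}, \eqref{EFsw2} involve strictly fewer generators. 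Inducting on the total number of generators, the procedure terminates with each element written as a combination of monomials $m^-m^+1_\lambda$, where $m^-$ is a monomial in the $F_i^{(a)}$ and $m^+$ a monomial in the $E_i^{(a)}$. Since $B^-$ and $B^+$ are bases of $\AUm$ and $\AUp$, each $m^-$ and $m^+$ re-expands in these bases, and spanning of $\dB\cap 1_\mu(\AdU)1_\lambda$ follows.

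Linear independence is the main obstacle, as the straightening gives no control over possible collapses among the $c^-c^+1_\lambda$. Here I would invoke the triangular decomposition of quantum $\sl_3$: Lusztig's theorem \cite{Lus2} that multiplication induces an isomorphism $\AUm\otimes_{\Z[q,\q]}\AU^{0}\otimes_{\Z[q,\q]}\AUp\simeqto\AU$ of $\Z[q,\q]$-modules, where $\AU^{0}$ is the Cartan part generated by the $K_i^{\pm1}$. Passing to the idempotented form amounts to replacing $\AU^{0}$ by the idempotents, i.e.\ making the $K_i$ act by the scalars $q^{\lambda_i}$ on the right and $q^{\mu_i}$ on the left of $1_\mu(\AdU)1_\lambda$; this descends the triangular decomposition to a $\Z[q,\q]$-module isomorphism between $1_\mu(\AdU)1_\lambda$ and the weight-$(\mu-\lambda)$ part of $\AUm\otimes_{\Z[q,\q]}\AUp$. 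Under it the products $c^-c^+1_\lambda$ correspond to the tensors $c^-\otimes c^+$ of matching weight, which are independent because $B^-$ and $B^+$ are bases; hence $\dB\cap 1_\mu(\AdU)1_\lambda$ is independent, and the theorem follows.

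The delicate point is to carry out this descent without circularity, making sure the idempotents introduce no relation beyond the weight bookkeeping. I would secure it by filtering $1_\mu(\AdU)1_\lambda$ by the total number of generators: by \eqref{EFsw} the $E$'s and $F$'s commute in the associated graded, so the weight component of $\AUm\otimes_{\Z[q,\q]}\AUp$ surjects onto it, and comparing with the triangular decomposition of $\AU$ (or, equivalently, testing on tensor products of a lowest- and a highest-weight module, on which the $c^-c^+1_\lambda$ act as manifestly independent operators) forces this surjection to be an isomorphism. Either way the $\sl_3$ statement is reduced to the already-established fact that $\dB^-$ and $\dB^+$ are bases.
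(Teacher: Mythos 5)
Your spanning argument is the same as the paper's, but for linear independence you take a genuinely different route. The paper stays entirely inside the presented algebra: it builds the free word algebra $\dW$ with adjoined idempotents, defines a straightening map $s$ that resolves one $EF$ pair at a time via \eqref{EFsw}, proves a diamond-type lemma (Lemma \ref{33}) showing the outcome is independent of the order of resolution, and thereby constructs an explicit module $V$ with basis $\dB$ carrying an associative multiplication satisfying \eqref{relI}--\eqref{relFF}; the universal property of $\dU$ as the algebra presented by those relations then forces the evaluation $V\to\dU$ to be an isomorphism. You instead import Lusztig's triangular decomposition of $\AU$ and descend it to the idempotented form. That is a legitimate and shorter path when the citation is available -- it is essentially how the structure of the modified form is established in Lusztig's book -- and the trade-off is clear: the paper's argument is self-contained and never leaves the presented algebra, while yours outsources the hard combinatorics to \cite{Lus2}.

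The place where you must be more careful is the descent, which you rightly flag as delicate. Since $\dU$ is defined here by generators and relations rather than as a quotient of $\AU$, ``comparing with the triangular decomposition of $\AU$'' presupposes a map \emph{out of} $\dU$; the filtration by number of generators only yields a surjection from the weight component of $\AUm\otimes\AUp$ onto $1_\mu(\AdU)1_\lambda$, which re-proves spanning and gives no lower bound, so that variant of your argument does not close the gap on its own. Your parenthetical alternative is the one that works: exhibit a module (a tensor product of a lowest- and a highest-weight Verma module, or Lusztig's own modified form) on which relations \eqref{relI}--\eqref{relFF} are checked to hold and on which the elements $c^-c^+1_\lambda$ act by operators whose independence follows from a coproduct/leading-term computation together with the triangular decomposition of $\AU$. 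With that step written out the proof is complete; as it stands, it is a sketch precisely at the point where the paper does all of its work.
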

\begin{proof}
Using relations \eqref{EFsw1}--\eqref{EFsw3} one can write every element $x\in\AdU$ as a linear combination of elements of the form $x^-x^+$ for $x^-\in\AdUm$, $x^+\in\AdUp$, which are linear combinations of the elements of the form $b^-b^+$ for $b^\pm\in \dB^\pm$. So, $\dB$ generates $\AdU$ as a $\Z[q,\q]$-module.

To finish the proof we need to prove that $\dB$ is linearly independent. It is enough to prove it in the $\Q(q)$-vector space $\dU$. For simplicity, let us deal with the basis $\dB'$ which consists of the multiples of the elements of $\dB$ such that divided powers are replaced with actual ones, i.e.\ we have $F_1^{a}F_2^{b}F_1^{c}E_1^{d}E_2^{e}E_1^{f}1_\lambda$ instead of $F_1^{(a)}F_2^{(b)}F_1^{(c)}E_1^{(d)}E_2^{(e)}E_1^{(f)}1_\lambda$.

Let $W$ be the free $\Q(q)$-algebra over $\{E_1,E_2,F_1,F_2\}$, i.e.\ the $\Q(q)$-algebra with basis consisting of all words in $\{E_1,E_2,F_1,F_2\}$ and concatenation as multiplication. $\dW$ is obtained from $W$ by adjoining idempotents $1_\lambda$ for $\lambda\in \X$, i.e.\ $\dot{W}$ has a basis consisting of all words in $\{E_1,E_2,F_1,F_2\}$ with one letter of the form $1_\lambda$. On $\dW$ we define multiplication as a concatenation up to relations \eqref{relI} and \eqref{relII}, i.e.\ concatenation if the idempotents match, and $0$ otherwise. There is the natural algebra epimorphism $e\colon \dW\to\dU$.

Let $\dS^+\subset\dW$ be generated only with $\{E_1,E_2\}$, and $\dS^-\subset\dW$ with $\{F_1,F_2\}$, together with an idempotent $1_\lambda$. Let $\dS$ be submodule of $\dW$ over the bases $\{b^-b^+|b^\pm\text{ word in }\dS^\pm\}-\{0\}$, i.e.\ generated by words in which all $F$-s are before all $E$-s.

Let the complexity $c$ of a word in $\dW$ be the number of pairs of letters $E$ and $F$ in it, regardless of the index, where $E$ comes before $F$. The complexity of an $x\in\dW$ is the maximum complexity of the word in the representation of $x$ as a linear combination of words. It holds $\dS=\{x\in\dW|c(x)=0\}$.

We define the map $s\colon\dW\to\dW$ on a word $x$ as follows: if $x\in\dS$, $s(x)=x$; if there is $E$ before $F$ in $x$, s replaces the first sub-word $E_iF_j$ with $F_jE_i1_\lambda + \delta_{i,j}[\lambda_i]1_\lambda$ (idempotent is first moved to that place), using relation \eqref{EFsw}. Therefore, $e(s(x))=e(x)$ in $\dU$ for every word $x$. Map $s$ is linearly extended to whole $\dW$. By repeating map $s$ we will finally end up in $\dS$. So, we can define a projection $s^\infty\colon\dW\to\dS$ which is equal to $s^n$ for sufficiently large $n$.

In computing $s^\infty$ we first resolve first sub-word $EF$. But it does not matter which pair is chosen, as shown in the following lemma.

\begin{lem}
\label{33}
For every word $x$ it holds $s^\infty(x)=s^\infty(x')$ if $x'$ is obtained from $x$ by replacing any sub-word $E_iF_j1_\lambda$ with $F_jE_i1_\lambda + \delta_{i,j}[\lambda_i]1_\lambda$.
\end{lem}
\begin{proof}
We prove the statement by induction on the complexity. If the complexity is $1$ there is only one pair $EF$ and therefore $x'=s(x)$, hence the result follows.

Let the complexity be greater and let $x'$ be obtained from $x$ by resolving $EF$ which is not first. Write
\begin{equation*}
x=aEFbEFc,\quad\quad x'=aEFbFEc+k\cdot aEFbc
\end{equation*}
where $a$ is a word without sub-word $EF$, second $EF$ is the one being resolved and $k$ is the constant $\delta_{i,j}[\lambda_i]$. We are a bit imprecise and we omit idempotents. It holds
\begin{equation*}
s(x)=aFEbEFc+k'\cdot abEFc,
\end{equation*}
\begin{equation*}
s(x')=aFEbFEc+k\cdot aFEbc + k'\cdot abFEc+k'k\cdot abc.
\end{equation*}
Note that $s(x')$ is obtained from $s(x)$ by resolving $EF$ in words. Because $s(x)$ has smaller complexity than $x$, by the induction hypothesis $s^\infty(s(x'))=s^\infty(s(x))$ implying that $s^\infty(x')=s^\infty(x)$.
\end{proof}

On $\dS$ there is a multiplication: for $x,y\in\dS$, $x*y:=s^\infty(xy)$. Lemma \ref{33} implies that this multiplication is associative. Clearly, the restriction $e\colon\dS\to\dU$ is a $\Q(q)$-algebra epimorphism.

Let $V^\pm$ be the submodule of $\dS^\pm$ generated by elements of the basis $\dB^\pm$ as words. We choose first word representation of \eqref{EqBase} as a default. Clearly $V^\pm$ is isomorphic to $\dU^\pm$ as a module. We define multiplication on $V^\pm$ taken from $\dU^\pm$. Than there is a natural algebra epimorphisms $q^\pm\colon\dS^\pm\to V^\pm$.

Recall that the words in $\dS$ are of the form $b^-b^+$ for $b^\pm$ word in $\dS^\pm$. Let $V\subset\dS$ be generated with $b^-b^+$ for $b^\pm$ word in $V^\pm$. By construction, $\dB$ (with elements as words) is the basis of $V$. We define epimorphism $q\colon\dS\to V$, $b^-b^+\mapsto q^-(b^-)q^+(b^+)$ and multiplication in $V$ $x\bullet y:=q(x*y)$.

By construction, for every word $x\in\dS$ $e(x)=e(q(x))$ in $\dU$. So, restriction $e\colon V\to\dU$ is a $\Q(q)$-algebra epimorphism. It is easy to check that $V$ satisfies all defining relations \eqref{relI}--\eqref{relFF} of $\dU$. So, by the definition of $\dU$ as a universal algebra with that relations, the epimorphism $e\colon V\to\dU$ has to be an isomorphism. Hence, $\dB$ is the basis of $\dU$, and therefore linearly independent.
\end{proof}

Note that $\dB$ is not the Lusztig's canonical basis. The set $_\mu \dB_\lambda:=\dB\cap 1_\mu\AdU 1_\lambda$ is a basis of $1_\mu\left (\AdU\right) 1_\lambda$.

\section{The $2$-category $\u$}

\subsection{Definition}
\label{DefU}
In this subsection we define the 2-category $\u$, the categorified quantum $\sl_3$ introduced by M.\ Khovanov and A.\ Lauda in \cite{KL3}. We use the standard graphical calculus of string diagrams to describe 2-categories as explained for example in \cite[Section 4]{Lau1}.

The 2-category $\u=\mathcal{U}(\sl_3)$ is the homogeneously graded additive 2-category consisting of
\begin{itemize}
\item objects $\lambda$ for $\lambda \in  \X$,
\item 1-morphisms are formal direct sums of compositions of
\begin{gather*}
\onel \colon \lambda \rightarrow \lambda \quad \text{identity}, \\
\onell{\lambda+i'} \e_i = \onell{\lambda+i'} \e_i\onel = \e_i \onel \colon \lambda \rightarrow \lambda+i' \quad \text{and} \\
\onell{\lambda-i'} \f_i = \onell{\lambda-i'} \f_i\onel = \f_i\onel \colon \lambda \rightarrow \lambda-i'
\end{gather*}
for $\lambda \in  \X$, $i\in \I$, $i'$ defined in \eqref{39}, together with their grading shift $x\la t\ra$ for all 1-morphisms $x$ and $t \in \Z$, and
\item 2-morphisms are free $\Z$-modules generated by (vertical and horizontal) compositions of the following diagrams
\begin{align*}
 \xy
 (0,4);(0,-4); **\dir{-} ?(1)*\dir{>};
 (0,0)*{\bullet};
 (1,-4)*{ \scs i};
 (4,2)*{ \scs \lambda};
 (-6,2)*{\scs \lambda+i'};
 (-6,0)*{};(6,0)*{};
 \endxy &\colon \e_i\onel\la t \ra \to \e_i\onel\la t+2 \ra   
 & \quad &
 \xy
 (0,4);(0,-4); **\dir{-} ?(0)*\dir{<};
 (0,0)*{\bullet};
 (1,4)*{ \scs i};
 (4,2)*{ \scs \lambda};
 (-6,2)*{\scs  \lambda-i'};
 (-6,0)*{};(6,0)*{};
 \endxy \colon \f_i\onel\la t \ra \to \f_i\onel\la t+2 \ra
  \\  & & & \\
 \xy
  (0,0)*{\xybox{
    (-3,-4)*{};(3,4)*{} **\crv{(-3,-1) & (3,1)}?(1)*\dir{>} ;
    (3,-4)*{};(-3,4)*{} **\crv{(3,-1) & (-3,1)}?(1)*\dir{>};
    (-2,-4)*{ \scs j};
    (4,-4)*{ \scs i};
     (5,1)*{\scs  \lambda};
     (-7,0)*{};(7,0)*{};
     }};
  \endxy &\colon \e_i\e_j\onel\la t \ra  \to \e_j\e_i\onel\la t-i\cdot j \ra
  &  &
  \xy
  (0,0)*{\xybox{
    (-3,4)*{};(3,-4)*{} **\crv{(-3,1) & (3,-1)}?(1)*\dir{>};
    (3,4)*{};(-3,-4)*{} **\crv{(3,1) & (-3,-1)}?(1)*\dir{>};
    (-2,4)*{ \scs j};
    (4,4)*{ \scs i};
     (5,-1)*{\scs  \lambda};
     (-7,0)*{};(7,0)*{};
     }};
  \endxy \colon \f_i\f_j\onel\la t \ra  \to \f_j\f_i\onel\la t-i\cdot j \ra
   \\ & & & \\
  \xy (0,0)*{\xybox{
     (-3,4)*{};(3,4)*{} **\crv{(-3,-2) & (3,-2)}?(1)*\dir{>};
     (-2,4)*{ \scs i};
    (5,0)*{\scs  \lambda};
    (-7,0)*{};(7,0)*{};
     }};
    \endxy &\colon \onel\la t \ra  \to \f_i\e_i\onel\la t+1+\lambda_i \ra
      &  &
   \xy (0,0)*{\xybox{
     (3,4)*{};(-3,4)*{} **\crv{(3,-2) & (-3,-2)}?(1)*\dir{>};
     (4,4)*{ \scs i};
    (5,0)*{\scs  \lambda};
    (-7,0)*{};(7,0)*{};
     }};
    \endxy \colon \onel\la t \ra  \to\e_i\f_i\onel\la t+1-\lambda_i\ra
   \\  & & & \\
   \xy (0,0)*{\xybox{
     (3,-4)*{};(-3,-4)*{} **\crv{(3,2) & (-3,2)}?(1)*\dir{>};
     (4,-4)*{ \scs i};
    (5,0)*{\scs  \lambda};
    (-7,0)*{};(7,0)*{};
     }};
    \endxy  &\colon \f_i\e_i\onel\la t \ra \to\onel\la t+1+\lambda_i \ra
   &  &
   \xy (0,0)*{\xybox{
     (-3,-4)*{};(3,-4)*{} **\crv{(-3,2) & (3,2)}?(1)*\dir{>};
     (-2,-4)*{ \scs i};
    (5,0)*{\scs  \lambda};
    (-7,0)*{};(7,0)*{};
     }};
    \endxy \colon\e_i\f_i\onel\la t \ra  \to\onel\la t+1-\lambda_i \ra 
\end{align*}
for every $\lambda\in \X$, $i\in \I$, $t\in\Z$. The degree of the 2-morphisms is the difference of the degree of the target and one of the source and can be read from the shift on the right-hand side.
\end{itemize}

Diagrams are read from right to left and bottom to top. The right most-region in our diagrams is usually coloured by $\lambda=(\lambda_1,\lambda_2)$, and we will usually omit it for simplicity.
The identity 2-morphism of the 1-morphism $\e_i\onel$ is represented by an upward oriented line labeled by $i\in \I$. Likewise, the identity 2-morphism of $\f_i\onel$ is represented by a downward oriented line.

It is convenient to introduce the following degree zero 2-morphisms.
\begin{equation}
\xy
  (0,0)*{\xybox{
    (-3,-5)*{};(3,5)*{} **\crv{(-3,-1) & (3,1)}?(1)*\dir{>};
    (-3,5)*{};(3,-5)*{} **\crv{(-3,1) & (3,-1)}?(1)*\dir{>};
    (-1.5,-5)*{ \scs i};
    (-1.5,5)*{ \scs j};
    (-7,0)*{};(7,0)*{};
  }};
\endxy :=
\xy
  (0,0)*{\xybox{
    (-9,8);(-9,-3); **\dir{-};
    (-9,-3)*{};(-3,-3)*{} **\crv{(-9,-7) & (-3,-7)};
    (-3,-3)*{};(3,3)*{} **\crv{(-3,0) & (3,0)};
    (3,-3)*{};(-3,3)*{} **\crv{(3,0) & (-3,0)};
    (-3,8);(-3,3); **\dir{-}?(1)*\dir{>};
    (9,3)*{};(3,3)*{} **\crv{(9,7) & (3,7)};
    (3,-8);(3,-3); **\dir{-};
    (9,-8);(9,3); **\dir{-}?(1)*\dir{>};
    (4.5,-8)*{ \scs i};
    (-7.5,8)*{ \scs j};
    (-13,0)*{};(13,0)*{};
  }};
\endxy,\quad\quad\quad
\xy
  (0,0)*{\xybox{
    (-3,-5)*{};(3,5)*{} **\crv{(-3,-1) & (3,1)}?(0)*\dir{<};
    (-3,5)*{};(3,-5)*{} **\crv{(-3,1) & (3,-1)}?(0)*\dir{<};
    (4.5,-5)*{ \scs i};
    (4.5,5)*{ \scs j};
    (-7,0)*{};(7,0)*{};
  }};
\endxy :=
\xy
  (0,0)*{\xybox{
    (-9,8);(-9,-3); **\dir{-}?(1)*\dir{>};
    (-9,-3)*{};(-3,-3)*{} **\crv{(-9,-7) & (-3,-7)};
    (-3,-3)*{};(3,3)*{} **\crv{(-3,0) & (3,0)};
    (3,-3)*{};(-3,3)*{} **\crv{(3,0) & (-3,0)};
    (-3,8);(-3,3); **\dir{-};
    (9,3)*{};(3,3)*{} **\crv{(9,7) & (3,7)};
    (3,-8);(3,-3); **\dir{-}?(1)*\dir{>};
    (9,-8);(9,3); **\dir{-};
    (-1.5,8)*{ \scs j};
    (10.5,-8)*{ \scs i};
    (-13,0)*{};(13,0)*{};
  }};
\endxy.
\end{equation}

The 2-morphisms satisfy same relations (see \cite{KL3} for more details). Since $\u$ is homogeneously graded, it is enough to state relations with one chosen degree shift of 1-morphisms and it automatically holds for any other degree shift. We chose that the degree shift of the source (lower) 1-morphism in every diagram is $t=0$. If the shift in the written formula is not stated, we consider it to be $t=0$. The relations are the following.

\begin{enumerate}
\item The 1-morphisms $\e_i\onel$ and $\f_i\onel$ are biadjoint, for $i\in \I$, up to a specified degree shift. Moreover, all 2-morphisms are cyclic with respect to this biadjoint structure.
\end{enumerate}

Relation 1. ensures that diagrams related by isotopy represent the same 2-morphism.

We continue with the relations which involve only one type of strands, either $i=1$ or $i=2$. They are exactly the same as in the $\sl_2$ case (see \cite{Lau1}). We postulate all relations both for $i=1$ and $2$  and omit the label $i$ on the strands for simplicity.
 
\begin{enumerate}
\setcounter{enumi}{1}

\item NilHecke relations:
\begin{equation}
\label{DubbleCrossii}
\xy
(-3,0)*{};(3,6)*{} **\crv{(-3,3) & (3,3)}?(1)*\dir{>};
(3,0)*{};(-3,6)*{} **\crv{(3,3) & (-3,3)}?(1)*\dir{>};
(-3,-6)*{};(3,0)*{} **\crv{(-3,-3) & (3,-3)};
(3,-6)*{};(-3,0)*{} **\crv{(3,-3) & (-3,-3)};
(-6,0)*{};(6,0)*{};
\endxy=0
\end{equation}
\begin{equation}
\label{BullCrossii}
\xy
(-3,-5);(-3,5); **\dir{-} ?(0)*\dir{<};
(3,-5);(3,5); **\dir{-} ?(0)*\dir{<};
(-7,0)*{};(7,0)*{};
\endxy =
\xy
  (0,0)*{\xybox{
    (-3,-5)*{};(3,5)*{} **\crv{(-3,-1) & (3,1)}?(1)*\dir{>}?(.3)*{\bullet};
    (3,-5)*{};(-3,5)*{} **\crv{(3,-1) & (-3,1)}?(1)*\dir{>};
    (-7,0)*{};(6,0)*{};
  }};
\endxy -
\xy
  (0,0)*{\xybox{
    (-3,-5)*{};(3,5)*{} **\crv{(-3,-1) & (3,1)}?(1)*\dir{>}?(.7)*{\bullet};
    (3,-5)*{};(-3,5)*{} **\crv{(3,-1) & (-3,1)}?(1)*\dir{>};
    (-6,0)*{};(7,0)*{};
  }};
\endxy=
\xy
  (0,0)*{\xybox{
    (-3,-5)*{};(3,5)*{} **\crv{(-3,-1) & (3,1)}?(1)*\dir{>};
    (3,-5)*{};(-3,5)*{} **\crv{(3,-1) & (-3,1)}?(1)*\dir{>}?(.7)*{\bullet};
    (-7,0)*{};(6,0)*{};
  }};
\endxy -
\xy
  (0,0)*{\xybox{
    (-3,-5)*{};(3,5)*{} **\crv{(-3,-1) & (3,1)}?(1)*\dir{>};
    (3,-5)*{};(-3,5)*{} **\crv{(3,-1) & (-3,1)}?(1)*\dir{>}?(.3)*{\bullet};
    (-6,0)*{};(7,0)*{};
  }};
\endxy
\end{equation}
\begin{equation}
\xy
 (-6,-7)*{};(6,7)*{} **\crv{(-6,-1) & (6,1)}?(1)*\dir{>};
 (6,-7)*{};(-6,7)*{} **\crv{(6,-1) & (-6,1)}?(1)*\dir{>};
 (0,-7)*{};(0,7)*{} **\crv{(0,-4) & (-4,-3) & (-4,3) & (0,4)}?(1)*\dir{>};
 (-8,0)*{};(8,0)*{};
\endxy =
\xy
 (-6,-7)*{};(6,7)*{} **\crv{(-6,-1) & (6,1)}?(1)*\dir{>};
 (6,-7)*{};(-6,7)*{} **\crv{(6,-1) & (-6,1)}?(1)*\dir{>};
 (0,-7)*{};(0,7)*{} **\crv{(0,-4) & (4,-3) & (4,3) & (0,4)}?(1)*\dir{>};
 (-8,0)*{};(8,0)*{};
\endxy
\end{equation}
Using the adjoint structure, the same relations hold for downward arrows.

\item Dotted bubbles of negative degree are zero, so that for all $m \in \Z^+_0$ one has
\begin{equation}
\xy
 (0,0)*{\lbbub};
 (5,4)*{\lambda};
 (6,0)*{};
 (5,-2)*{\scriptstyle m};
\endxy
  = 0
 \qquad  \text{if $m < \lambda_i-1$,} \quad\quad
\xy 
 (0,0)*{\rbbub};
 (5,4)*{\lambda};
 (6,0)*{};
 (5,-2)*{\scriptstyle m};
\endxy = 0\quad
\text{if $m < -\lambda_i -1$.}
\end{equation}
Dotted bubbles of degree zero are equal to the identity 2-morphism:
\begin{equation}
\label{Zerobub}
\xy
 (0,0)*{\lbbub};
 (5,4)*{\lambda};
 (6,0)*{};
 (7,-2)*{\scriptstyle \lambda_i-1};
\endxy
  =  \Id_{\onel} \quad \text{for $\lambda_i \geq 1$,}
  \qquad \quad
\xy 
 (0,0)*{\rbbub};
 (5,4)*{\lambda};
 (6,0)*{};
 (8,-2)*{\scriptstyle -\lambda_i-1};
\endxy  =  \Id_{\onel} \quad \text{for $\lambda_i \leq -1$.}
\end{equation}
\end{enumerate}

We use the following notation for the dotted bubbles:
\begin{equation}
 \xy
 (0,0)*{\lbub};
 (5,4)*{\lambda};
 (6,0)*{};
 (5,-2)*{\scriptstyle m};
\endxy := \xy
 (5,4)*{\lambda};
(0,0)*{\lbbub};
 (9,-2)*{\scriptstyle m+\lambda_i-1}; 
\endxy,
\quad\quad
 \xy
(0,0)*{\rbub};
 (5,4)*{\lambda};
 (6,0)*{};
 (5,-2)*{\scriptstyle m};
\endxy := \xy
 (5,4)*{\lambda};
(0,0)*{\rbbub};
 (9,-2)*{\scriptstyle m-\lambda_i-1}; 
\endxy,
\end{equation}
for $m+\lambda_i-1\geq 0$, respectively $m-\lambda_i-1\geq 0$, so that
\begin{equation}
 \deg\left( \xy
(0,0)*{\lbub};
 (6,0)*{};
 (5,-2)*{\scriptstyle m};
\endxy\right)= \deg\left(\xy
(0,0)*{\rbub};
 (6,0)*{};
 (5,-2)*{\scriptstyle m};
\endxy\right)=2m.
\end{equation}

If $m+\lambda_i-1<0$, respectively $m-\lambda_i-1<0$, the fake bubbles are defined recursively by the homogeneous terms of the equation
\begin{multline}
\label{gras}
 \left(
\xy
(0,0)*{\rbub};
 (7,0)*{};
 (5,-2)*{\scriptstyle 0};
\endxy+\xy
(0,0)*{\rbub};
 (7,0)*{};
 (5,-2)*{\scriptstyle 1};
\endxy t+\dots+\xy
(0,0)*{\rbub};
 (7,0)*{};
 (5,-2)*{\scriptstyle j};
\endxy t^j+\dots
\right)\cdot \\
\left(
\xy
(0,0)*{\lbub};
 (7,0)*{};
 (5,-2)*{\scriptstyle 0};
\endxy+\xy
(0,0)*{\lbub};
 (7,0)*{};
 (5,-2)*{\scriptstyle 1};
\endxy t+\dots+\xy
(0,0)*{\lbub};
 (7,0)*{};
 (5,-2)*{\scriptstyle j};
\endxy t^j+\dots
\right)=\Id_{\onel} 
\end{multline}
and the additional condition
\begin{equation}
 \xy
(0,0)*{\lbub};
 (6,0)*{};
 (5,-2)*{\scriptstyle 0};
\endxy=\xy
(0,0)*{\rbub};
 (6,0)*{};
 (5,-2)*{\scriptstyle 0};
\endxy=\Id_{\onel}.
\end{equation}
One can check that relation \eqref{gras} holds also for the real bubbles.
So we will not distinguish between real and fake bubbles in what follows.

\begin{enumerate}
\setcounter{enumi}{3}
\item There are additional relations for one type of strands:
\begin{equation}
\xy
(0,-8)*{};(0,8)*{} **\crv{(0,4) & (6,4) & (6,-4) & (0,-4)}?(1)*\dir{>};
(-3,0)*{};(8,0)*{};
\endxy = - \sum_{f_1+f_2=-\lambda_i}\xy
 (0,-8)*{};(0,8)*{} **\dir{-} ?(1)*\dir{>} ?(.5)*{\bullet};
(-2,2)*{\scriptstyle f_1};
 (8,0)*{\lbub};
 (13,-2)*{\scriptstyle f_2};
 (-3,0)*{};(15,0)*{};
\endxy\, ,\quad\quad\xy
(0,-8)*{};(0,8)*{} **\crv{(0,4) & (-6,4) & (-6,-4) & (0,-4)}?(1)*\dir{>};
(3,0)*{};(-8,0)*{};
\endxy = \sum_{f_1+f_2=\lambda_i}\xy
 (0,-8)*{};(0,8)*{} **\dir{-} ?(1)*\dir{>} ?(.5)*{\bullet};
(2,2)*{\scriptstyle f_1};
 (-8,0)*{\rbub};
 (-3,-2)*{\scriptstyle f_2};
 (3,0)*{};(-13,0)*{};
\endxy\, ,
\end{equation}
\begin{equation}
\label{DubbleCrossiir1}
\xy
{\ar (0,-8)*{}; (0,8)*{}};
(-3,0)*{};(3,0)*{};
\endxy\xy
{\ar (0,8)*{}; (0,-8)*{}};
(-3,0)*{};(6,0)*{};
\endxy = -\xy
(-3,0)*{};(3,8)*{} **\crv{(-3,4) & (3,4)};
(3,0)*{};(-3,8)*{} **\crv{(3,4) & (-3,4)}?(1)*\dir{>};
(-3,-8)*{};(3,0)*{} **\crv{(-3,-4) & (3,-4)};
(3,-8)*{};(-3,0)*{} **\crv{(3,-4) & (-3,-4)}?(0)*\dir{<};
(-7,0)*{};(7,0)*{};
\endxy + \sum_{f_1+f_2+f_3=\lambda_i-1}\xy
 (3,9)*{};(-3,9)*{} **\crv{(3,4) & (-3,4)} ?(1)*\dir{>} ?(.2)*{\bullet};
(5,6)*{\scriptstyle f_1};
 (0,0)*{\rbub};
(5,-2)*{\scriptstyle f_2};
 (-3,-9)*{};(3,-9)*{} **\crv{(-3,-4) & (3,-4)} ?(1)*\dir{>}?(.8)*{\bullet};
(5,-6)*{\scriptstyle f_3};
 (-8,0)*{};(8,0)*{};
\endxy\, ,
\end{equation}
\begin{equation}
\label{DubbleCrossiir2}
\xy
{\ar (0,8)*{}; (0,-8)*{}};
(-3,0)*{};(3,0)*{};
\endxy\xy
{\ar (0,-8)*{}; (0,8)*{}};
(-3,0)*{};(6,0)*{};
\endxy = -\xy
(-3,0)*{};(3,8)*{} **\crv{(-3,4) & (3,4)}?(1)*\dir{>};
(3,0)*{};(-3,8)*{} **\crv{(3,4) & (-3,4)};
(-3,-8)*{};(3,0)*{} **\crv{(-3,-4) & (3,-4)}?(0)*\dir{<};
(3,-8)*{};(-3,0)*{} **\crv{(3,-4) & (-3,-4)};
(-7,0)*{};(7,0)*{};
\endxy + \sum_{f_1+f_2+f_3=-\lambda_i-1}\xy
 (3,9)*{};(-3,9)*{} **\crv{(3,4) & (-3,4)} ?(0)*\dir{<} ?(.2)*{\bullet};
(5,6)*{\scriptstyle f_1};
 (0,0)*{\lbub};
(5,-2)*{\scriptstyle f_2};
 (-3,-9)*{};(3,-9)*{} **\crv{(-3,-4) & (3,-4)} ?(0)*\dir{<}?(.8)*{\bullet};
(5,-6)*{\scriptstyle f_3};
 (-8,0)*{};(8,0)*{};
\endxy\, .
\end{equation}
\end{enumerate}
The coefficients $f_i$ run in $\Z^+_0$. 

From now on, instead of labeling strands with $i\in \I$, we usually colour strands: red strands represent strands labeled with $i=1$ and blue strands represent strands labeled with $i=2$.

\begin{enumerate}
\setcounter{enumi}{4}
\item Relations with different type of strands. We draw only one possibility of colouring, the relations with opposite colouring also hold.
\begin{equation}
\label{BullCrossij}
\xy
  (0,0)*{\xybox{
    (-3,-5)*{};(3,5)*{} **[red]\crv{(-3,-1) & (3,1)}?(1)*[red]\dir{>}?(.3)*{\bullet};
    (3,-5)*{};(-3,5)*{} **[blue]\crv{(3,-1) & (-3,1)}?(1)*[blue]\dir{>};
    (-7,0)*{};(7,0)*{};
  }};
\endxy =
\xy
  (0,0)*{\xybox{
    (-3,-5)*{};(3,5)*{} **[red]\crv{(-3,-1) & (3,1)}?(1)*[red]\dir{>}?(.7)*{\bullet};
    (3,-5)*{};(-3,5)*{} **[blue]\crv{(3,-1) & (-3,1)}?(1)*[blue]\dir{>};
    (-7,0)*{};(7,0)*{};
  }};
\endxy \quad \quad
\xy
  (0,0)*{\xybox{
    (-3,-5)*{};(3,5)*{} **[red]\crv{(-3,-1) & (3,1)}?(1)*[red]\dir{>};
    (3,-5)*{};(-3,5)*{} **[blue]\crv{(3,-1) & (-3,1)}?(1)*[blue]\dir{>}?(.3)*{\bullet};
    (-7,0)*{};(7,0)*{};
  }};
\endxy =
\xy
  (0,0)*{\xybox{
    (-3,-5)*{};(3,5)*{} **[red]\crv{(-3,-1) & (3,1)}?(1)*[red]\dir{>};
    (3,-5)*{};(-3,5)*{} **[blue]\crv{(3,-1) & (-3,1)}?(1)*[blue]\dir{>}?(.7)*{\bullet};
    (-7,0)*{};(7,0)*{};
  }};
\endxy
\end{equation}
\begin{equation}
\label{DubbleCrossijr}
\xy
(-3,0)*{};(3,6)*{} **[blue]\crv{(-3,3) & (3,3)};
(3,0)*{};(-3,6)*{} **[red]\crv{(3,3) & (-3,3)}?(1)*[red]\dir{>};
(-3,-6)*{};(3,0)*{} **[red]\crv{(-3,-3) & (3,-3)};
(3,-6)*{};(-3,0)*{} **[blue]\crv{(3,-3) & (-3,-3)}?(0)*[blue]\dir{<};
(-7,0)*{};(7,0)*{};
\endxy =
\xy
(-3,-6);(-3,6); **[red]\dir{-} ?(0)*[red]\dir{<};
(3,-6);(3,6); **[blue]\dir{-} ?(1)*[blue]\dir{>};
(-7,0)*{};(7,0)*{};
\endxy \quad \quad
\xy
(-3,0)*{};(3,6)*{} **[blue]\crv{(-3,3) & (3,3)}?(1)*[blue]\dir{>};
(3,0)*{};(-3,6)*{} **[red]\crv{(3,3) & (-3,3)};
(-3,-6)*{};(3,0)*{} **[red]\crv{(-3,-3) & (3,-3)}?(0)*[red]\dir{<};
(3,-6)*{};(-3,0)*{} **[blue]\crv{(3,-3) & (-3,-3)};
(-7,0)*{};(7,0)*{};
\endxy =
\xy
(-3,-6);(-3,6); **[red]\dir{-} ?(1)*[red]\dir{>};
(3,-6);(3,6); **[blue]\dir{-} ?(0)*[blue]\dir{<};
(-7,0)*{};(7,0)*{};
\endxy
\end{equation}
\begin{equation}
\label{DubbleCrossij}
\xy
(-3,0)*{};(3,6)*{} **[blue]\crv{(-3,3) & (3,3)}?(1)*[blue]\dir{>};
(3,0)*{};(-3,6)*{} **[red]\crv{(3,3) & (-3,3)}?(1)*[red]\dir{>};
(-3,-6)*{};(3,0)*{} **[red]\crv{(-3,-3) & (3,-3)};
(3,-6)*{};(-3,0)*{} **[blue]\crv{(3,-3) & (-3,-3)};
(-7,0)*{};(7,0)*{};
\endxy =
\xy
(-3,-6);(-3,6); **[red]\dir{-} ?(0)*[red]\dir{<} ?(.5)*{\bullet};
(3,-6);(3,6); **[blue]\dir{-} ?(0)*[blue]\dir{<};
(-7,0)*{};(7,0)*{};
\endxy +
\xy
(-3,-6);(-3,6); **[red]\dir{-} ?(0)*[red]\dir{<};
(3,-6);(3,6); **[blue]\dir{-} ?(0)*[blue]\dir{<} ?(.5)*{\bullet};
(-7,0)*{};(7,0)*{};
\endxy
\end{equation}
\begin{equation}
\label{R3iji}
\xy
 (-6,-7)*{};(6,7)*{} **[red]\crv{(-6,-1) & (6,1)}?(1)*[red]\dir{>};
 (6,-7)*{};(-6,7)*{} **[red]\crv{(6,-1) & (-6,1)}?(1)*[red]\dir{>};
 (0,-7)*{};(0,7)*{} **[blue]\crv{(0,-4) & (-4,-3) & (-4,3) & (0,4)}?(1)*[blue]\dir{>};
 (-8,0)*{};(8,0)*{};
\endxy -
\xy
 (-6,-7)*{};(6,7)*{} **[red]\crv{(-6,-1) & (6,1)}?(1)*[red]\dir{>};
 (6,-7)*{};(-6,7)*{} **[red]\crv{(6,-1) & (-6,1)}?(1)*[red]\dir{>};
 (0,-7)*{};(0,7)*{} **[blue]\crv{(0,-4) & (4,-3) & (4,3) & (0,4)}?(1)*[blue]\dir{>};
 (-8,0)*{};(9,0)*{};
\endxy =
\xy
(6,7);(6,-7); **[red]\dir{-}?(1)*[red]\dir{>};
(0,7);(0,-7); **[blue]\dir{-}?(1)*[blue]\dir{>};
(-6,7);(-6,-7); **[red]\dir{-}?(1)*[red]\dir{>};
(-9,0)*{};(9,0)*{};
\endxy
\end{equation}
\begin{equation}
\xy
 (-6,-7)*{};(6,7)*{} **\crv{(-6,-1) & (6,1)}?(1)*\dir{>};
 (6,-7)*{};(-6,7)*{} **\crv{(6,-1) & (-6,1)}?(1)*\dir{>};
 (0,-7)*{};(0,7)*{} **\crv{(0,-4) & (-4,-3) & (-4,3) & (0,4)}?(1)*\dir{>};
 (-8,0)*{};(8,0)*{};
(-5,-7)*{\scs i};
(1,-7)*{\scs j};
(7,-7)*{\scs k};
\endxy =
\xy
 (-6,-7)*{};(6,7)*{} **\crv{(-6,-1) & (6,1)}?(1)*\dir{>};
 (6,-7)*{};(-6,7)*{} **\crv{(6,-1) & (-6,1)}?(1)*\dir{>};
 (0,-7)*{};(0,7)*{} **\crv{(0,-4) & (-4,-3) & (-4,3) & (0,4)}?(1)*\dir{>};
 (-8,0)*{};(8,0)*{};
(-5,-7)*{\scs i};
(1,-7)*{\scs j};
(7,-7)*{\scs k};
\endxy \quad \quad \text{unless $i=k\neq j$.}
\end{equation}
\end{enumerate}

\subsection{Karoubi envelope $\du$}
We define the homogeneously graded additive 2-category $\du$ having the same objects as $\u$ and $\du(\lambda,\mu)=\Kar(\u(\lambda,\mu))$ for $\lambda,\mu\in \X$. Natural embedding functors $\u(\lambda,\mu) \to \du(\lambda,\mu)$ combine to form a natural embedding 2-functor $\u\to\du$. Identifying along this embedding, we can consider $\u\subset\du$.

In $\u$ there is an idempotent $e_{i,a}$ defined as follows:
\begin{equation}
 e_{i,a} := \delta_{i,a}D_{i,a}=
 \xy
(-18,-4)*{}; (9.5,-4)*{} **\dir{-};
(9.5,-4)*{}; (9.5,2)*{} **\dir{-}; 
(9.5,2)*{}; (-18,2)*{} **\dir{-}; 
(-18,2)*{}; (-18,-4)*{} **\dir{-}; 
(-7.5,-4)*{}; (-7.5,-8)*{} **\dir{-};
(-16,-4)*{}; (-16,-8)*{} **\dir{-}; 
(2.5,-4)*{}; (2.5,-8)*{} **\dir{-}; 
(7.5,-4)*{}; (7.5,-8)*{} **\dir{-};
(-6.5,-8)*{\scriptstyle i};
(-15,-8)*{\scriptstyle i};
(3.5,-8)*{\scriptstyle i};
(8.5,-8)*{\scriptstyle i};
{\ar (-16,2)*{}; (-16,8)*{}};
{\ar (-7.5,2)*{}; (-7.5,8)*{}};
{\ar (2.5,2)*{}; (2.5,8)*{}};
{\ar (7.5,2)*{}; (7.5,8)*{}};
(-7.5,5)*{\bullet};
(-16,5)*{\bullet};
(-11.5,6)*{\scriptstyle a-2};
(-20,6)*{\scriptstyle a-1};
(2.5,5)*{\bullet};
(-2.5,-6)*{\dots}; (-2.5,5)*{\dots};
(-4.25,-1)*{D_{i,a}};
(-19.5,0)*{};(11,0)*{};
\endxy,
\end{equation}
where $D_{i,a}$ is the longest braid on $a$ $i$-strands, i.e.\ where every strand crosses every other exactly once. The idempotents
$f_{i,a}$ are obtained from $e_{i,a}$ by a $180^\circ$ rotation.

Therefore, for any $\lambda,\mu \in  \X$, $i\in \I$ we can define additional 1-morphisms in $\du$:
\begin{equation}
 \eia\onel\la t\ra := \left(\e_i^a\onel\left\la t-\tfrac{a(a-1)}{2}\right\ra,e_{i,a}\right),
\end{equation}
\begin{equation}
 \fia\onel\la t\ra := \left(\f_i^a\onel\left\la t+\tfrac{a(a-1)}{2}\right\ra,f_{i,a}\right).
\end{equation}

Since quantum integers $[a]$ and factorials $[a]!$ for $a\in\Z^+$ are in $\Z^+[q,\q]$ we can define
\begin{equation}
[a](x)=x\la a-1\ra\oplus x\la a-3\ra\oplus\cdots\oplus x\la 1-a\ra,
\end{equation}
\begin{equation}
[a]!(x)=[a]([a-1](...[1](x)...))=\bigoplus_{j=0}^a(x\la a-1-2j\ra)^{\left(\begin{array}{c}a\\j\end{array}\right)}
\end{equation}
for 1-morphism $x$ in $\du$.

There is a direct sum decomposition in $\du$ (see \cite{KL1,KL2}):
\begin{equation}
\label{deE}
\e_i^a\onel\cong [a]!\left(\eia\onel\right),
\end{equation}
\begin{equation}
\label{deF}
\f_i^a\onel\cong [a]!\left(\fia\onel\right).
\end{equation}

\subsection{Positive part $\up$ and $\dup$}
In this subsection we define the positive part $\up$ and $\dup$ of $\u$ and $\du$ respectively. Everything holds for negative parts $\um$ and $\dum$ analogously.

The 2-category $\up$ is the subcategory of $\u$ generated by upward arrows, or, more precisely,
\begin{itemize}
\item objects are the same as in $\u$, i.e.\ $\Ob(\up)=\Ob(\u)=\X$,
\item 1-morphisms are formal direct sums of compositions of
\begin{gather*}
\onel \colon \lambda \rightarrow \lambda \quad \text{identity}, \\
\onell{\lambda+i'} \e_i = \onell{\lambda+i'} \e_i\onel = \e_i \onel \colon \lambda \rightarrow \lambda+i'
\end{gather*}
for $\lambda \in  \X$, $i\in \I$ together with their grading shift $x\la t\ra$ for all 1-morphisms $x$ and $t \in \Z$, and
\item 2-morphisms are free $\Z$-modules generated by (vertical and horizontal) compositions of the following diagrams
\begin{equation*}
 \xy
 (0,4);(0,-4); **\dir{-} ?(1)*\dir{>};
 (0,0)*{\bullet};
 (1,-4)*{ \scs i};
 (4,2)*{ \scs \lambda};
 (-6,2)*{\scs \lambda+i'};
 (-6,0)*{};(6,0)*{};
 \endxy \colon \e_i\onel\la t \ra \to \e_i\onel\la t+2 \ra   
  \quad \quad
 \xy
  (0,0)*{\xybox{
    (-3,-4)*{};(3,4)*{} **\crv{(-3,-1) & (3,1)}?(1)*\dir{>} ;
    (3,-4)*{};(-3,4)*{} **\crv{(3,-1) & (-3,1)}?(1)*\dir{>};
    (-2,-4)*{ \scs j};
    (4,-4)*{ \scs i};
     (5,1)*{\scs  \lambda};
     (-7,0)*{};(7,0)*{};
     }};
  \endxy \colon \e_i\e_j\onel\la t \ra  \to \e_j\e_i\onel\la t-i\cdot j \ra
\end{equation*}
\end{itemize}

Similarly to the 2-category $\du$, the additive 2-category $\dup$ has the same objects as $\up$ and $\dup(\lambda,\mu)=\Kar(\up(\lambda,\mu))$ for $\lambda,\mu\in \X$.

By identifying all objects $\lambda$ of $\up$, respectively $\dup$, we get a 1-category called $\up$, respectively $\dup$, by Sto\v si\' c in \cite{Sto}. We use our definition because we want $\up$ to be 2-subcategory of $\u$. Some results of Sto\v si\'c in \cite[Chapter 4]{Sto} can be straightforwardly translated to our framework as follows.

\begin{thm}[{\cite[Theorem 2]{Sto}}]
The set of indecomposable 1-morphisms of $\dup(\lambda,\mu)$, $\lambda,\mu\in \X$ is the following set:
\begin{multline*}
\left\{\e_1^{(a)}\e_2^{(b)}\e_1^{(c)}\onel\la t \ra|b\geq a+c;\, a,b,c\geq 0;\, t\in\Z;\, \mu=\lambda + (a+c)1'+b2'\right\}\\
\cup
\left\{\e_2^{(a)}\e_1^{(b)}\e_2^{(c)}\onel\la t \ra|b\geq a+c;\, a,b,c\geq 0;\, t\in\Z;\, \mu=\lambda + (a+c)2'+b1'\right\}
\end{multline*}
No two elements from above are isomorphic, except that
\begin{equation}
\e_1^{(a)}\e_2^{(a+c)}\e_1^{(c)}\onel\la t \ra \cong \e_2^{(c)}\e_1^{(a+c)}\e_2^{(a)}\onel\la t \ra,\quad a,c\geq 0;\, t\in\Z.
\end{equation}
\end{thm}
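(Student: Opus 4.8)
The statement is \cite[Theorem 2]{Sto} transported from Sto\v{s}i\'{c}'s one--object framework into the weight--graded $2$--category $\dup$, and my plan is to recall why his classification holds and then to check that recording the source and target weights changes nothing. The first and decisive observation is that in the positive part \emph{no relation involves the weight} $\lambda$. The only generating $2$--morphisms of $\up$ are dots and crossings of upward strands, and the relations among them are the nilHecke relations \eqref{DubbleCrossii}, \eqref{BullCrossii} together with the Serre--type relation \eqref{R3iji}; none of these mentions $\lambda$, since bubbles (the only $\lambda$--dependent data) arise only when upward and downward strands mix and are absent from $\up$. Hence the graded $2$--hom space between two $\e$--words depends only on the words, not on the region label. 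This is exactly the identification that lets Sto\v{s}i\'{c}'s collapsed category and each hom--category $\dup(\lambda,\mu)$ share the same decomposition theory: an indecomposable of the former yields an indecomposable of the latter once a compatible source $\lambda$ is chosen, the target being forced to $\mu=\lambda+(a+c)1'+b2'$ (resp.\ $\mu=\lambda+(a+c)2'+b1'$) by the shifts recorded in Subsection \ref{DefU}.

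Next I would assemble the two ingredients of Sto\v{s}i\'{c}'s argument. The splitting of the KLR idempotents $e_{i,a}$ in the Karoubi envelope gives the divided--power decompositions \eqref{deE} and \eqref{deF}, so every composite of $\e_1$ and $\e_2$ becomes, in $\dup$, a direct sum of grading shifts of words $\e_1^{(a)}\e_2^{(b)}\e_1^{(c)}\onel$ and $\e_2^{(a)}\e_1^{(b)}\e_2^{(c)}\onel$. A direct--sum decomposition of $1$--morphisms lifting Lusztig's relation \eqref{Lrel} (established in \cite{Sto}) then rewrites every word whose middle exponent is too small as a direct sum of words with $b\geq a+c$: the small--middle form in one colour equals a sum of large--middle forms in the other colour, and symmetrically. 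Thus the displayed set spans $\dup(\lambda,\mu)$ under $\oplus$ and grading shift.

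It remains to record indecomposability and distinctness, which again are carried over hom--category by hom--category. One checks that the degree--zero part of the endomorphism ring of each $\e_1^{(a)}\e_2^{(b)}\e_1^{(c)}\onel$ is $\Z$, so that this ring is local and the object is indecomposable, while a graded--rank computation of the hom spaces shows that distinct standard words have no degree--zero isomorphisms between them. The single coincidence is the boundary case $b=a+c$, where $\e_1^{(a)}\e_2^{(a+c)}\e_1^{(c)}\onel\cong\e_2^{(c)}\e_1^{(a+c)}\e_2^{(a)}\onel$; this is the categorification of \eqref{EqBase}, so the two indexing families overlap exactly there.

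I expect the entire difficulty to sit inside \cite[Chapter 4]{Sto}: proving that the standard words exhaust all indecomposables and have local endomorphism rings requires a detailed analysis of the nilHecke action, the Serre relation \eqref{R3iji}, and the resulting upper--triangularity over the canonical basis. In the present transport, by contrast, the only genuinely new point to verify is the weight--independence noted in the first paragraph; once that is in place, Sto\v{s}i\'{c}'s conclusions apply verbatim in each $\dup(\lambda,\mu)$, and what remains is the routine bookkeeping of the shift $\mu=\lambda+(a+c)1'+b2'$ and its colour--swapped companion.
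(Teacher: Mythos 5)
Your proposal is correct and takes essentially the same route as the paper: the paper does not reprove this statement but simply imports Sto\v{s}i\'{c}'s Theorem 2, remarking that his results ``can be straightforwardly translated to our framework,'' which is precisely the transport you carry out. Your explicit justification of the translation --- that the relations of $\up$ (nilHecke, dot--sliding, and the $iji$ relation) never mention the weight $\lambda$, so each hom--category $\dup(\lambda,\mu)$ inherits Sto\v{s}i\'{c}'s classification verbatim with the target weight forced by bookkeeping --- is exactly the point the paper leaves implicit.
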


To have the uniqueness we consider the set of indecomposables without one of isomorphic 1-morphisms:
\begin{multline}
_\mu\b^{*+}_\lambda:=
\left\{\e_1^{(a)}\e_2^{(b)}\e_1^{(c)}\onel\la t \ra|b\geq a+c;\, a,b,c\geq 0;\, t\in\Z;\, \mu=\lambda + (a+c)1'+b2'\right\}\\
\cup
\left\{\e_2^{(a)}\e_1^{(b)}\e_2^{(c)}\onel\la t \ra|b> a+c;\, a,b,c\geq 0;\, t\in\Z;\, \mu=\lambda + (a+c)2'+b1'\right\}
\end{multline}
Let $_\mu\b^+_\lambda$ be the set of elements of $_\mu\b^{*+}_\lambda$ without shift, i.e.\ with shift $t=0$.
Let
\begin{equation}
\b^{*+}:=\bigcup_{\lambda,\mu\in \X}{_\mu\b^{*+}_\lambda},\quad\quad
\b^+:=\bigcup_{\lambda,\mu\in \X}{_\mu\b^+_\lambda}.
\end{equation}
Note that there is a natural bijection $\iota^+\colon \dB^+\to \b^+$ between Lusztig's canonical basis $B^+$ of $\AdUp$ (defined in \eqref{LCBp}) and $\b^+$:
\begin{equation}
\label{iota}
E_1^{(a)}E_2^{(b)}E_1^{(c)}1_\lambda \mxto{\iota^+} \e_1^{(a)}\e_2^{(b)}\e_1^{(c)}\onel, \quad\quad
E_2^{(a)}E_1^{(b)}E_2^{(c)}1_\lambda \mxto{\iota^+} \e_2^{(a)}\e_1^{(b)}\e_2^{(c)}\onel.
\end{equation}

\begin{prop}[{\cite[Proof of Theorem 2]{Sto}}]
\label{sup}
Let $t,t'\in\Z$ and $x,y\in{_\mu\b^{*+}_\lambda}$. If $t-t'<0$  or $t-t'=0$ and $x\neq y$, then
\begin{equation*}
 \dup(x\la t\ra,y\la t'\ra)=\emptyset.
\end{equation*}
The only elements in $\dup(x\la t\ra,y\la t\ra)$ are multiples of the identity.
\end{prop}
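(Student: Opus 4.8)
The plan is to convert the statement, which concerns the graded additive category $\dup(\lambda,\mu)$, into a statement about the degrees in which the relevant $2$-morphism spaces are supported. Since the degree shift $\la 1\ra$ is an invertible functor on $\dup(\lambda,\mu)$, applying $\la -t'\ra$ to source and target gives $\dup(x\la t\ra,y\la t'\ra)\cong\dup(x\la t-t'\ra,y)$. Absorbing the shifts carried by $x,y$ into the integers $t,t'$, I may assume $x,y\in{_\mu\b^+_\lambda}$ are unshifted and set $\mathrm{HOM}(x,y)_d:=\dup(x\la d\ra,y)$, so that $\dup(x\la t\ra,y\la t'\ra)=\mathrm{HOM}(x,y)_{t-t'}$. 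It then suffices to show that the graded group $\mathrm{HOM}(x,y)=\bigoplus_d\mathrm{HOM}(x,y)_d$ is supported in degrees $d\ge 0$, with $\mathrm{HOM}(x,y)_0=0$ when $x\neq y$ and $\mathrm{HOM}(x,x)_0=\Z\cdot\Id_x$; the three clauses of the Proposition are then exactly the cases $d<0$, $d=0$ with $x\neq y$, and $d=0$ with $x=y$. This is precisely the assertion that $\b^+$ behaves as a strongly upper-triangular basis in the sense needed for Proposition \ref{Sup}.

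Next I would realise the objects concretely. By \cite[Theorem 2]{Sto} each of $x,y$ is of the form $\e_1^{(a)}\e_2^{(b)}\e_1^{(c)}\onel$ or $\e_2^{(a)}\e_1^{(b)}\e_2^{(c)}\onel$ with $b\ge a+c$, hence is the image of a product of divided-power idempotents $e_{i,a}$ acting on a string of elementary generators: for instance $\e_1^{(a)}\e_2^{(b)}\e_1^{(c)}\onel$ is the summand of $\e_1^{a}\e_2^{b}\e_1^{c}\onel$ cut out by $e_{1,a}\,e_{2,b}\,e_{1,c}$ via the decompositions \eqref{deE}. Consequently $\mathrm{HOM}(x,y)$ is a graded direct summand $e_Y\,\mathrm{HOM}(X,Y)\,e_X$ of the graded $2$-morphism space between two such elementary strings $X,Y$. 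These latter spaces are governed by the quiver Hecke (KLR) algebra of type $A_2$ cut out by the relations of Subsection \ref{DefU}: they are free graded $\Z$-modules with explicit diagrammatic bases built from crossings, dots and bubbles, so that the graded Poincar\'e series $\sum_d\mathrm{rk}\,\mathrm{HOM}(x,y)_d\,q^d$ is a well-defined, in principle computable, invariant.

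The decisive input is that this graded Poincar\'e series equals the value of Lusztig's bilinear form $(\,,\,)$ on canonical basis elements. Under the bijection $\iota^+$ of \eqref{iota}, $x$ and $y$ correspond to canonical basis elements $b_x,b_y\in\dB^+$, and the categorification of $\AdUp$ identifies the graded rank of $\mathrm{HOM}(x,y)$ with $(b_x,b_y)$. Since Lusztig's canonical basis is almost orthonormal, under the normalization matching the shift $\la 1\ra$ one has $(b_x,b_y)\in\delta_{x,y}+q\,\N[[q]]$: the series has no negative powers of $q$, its constant term is $\delta_{x,y}$, and the remaining coefficients are non-negative. Matching the power of $q$ with the homogeneous degree $d$, this says exactly that $\mathrm{HOM}(x,y)_d=0$ for $d<0$, that $\mathrm{HOM}(x,y)_0$ has rank $\delta_{x,y}$, and, since $\Id_x$ is a nonzero degree-$0$ endomorphism, that $\mathrm{HOM}(x,x)_0=\Z\cdot\Id_x$. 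This yields all three clauses, the only coincidence being the isomorphism $\e_1^{(a)}\e_2^{(a+c)}\e_1^{(c)}\onel\cong\e_2^{(c)}\e_1^{(a+c)}\e_2^{(a)}\onel$, which is exactly what is removed in passing to ${_\mu\b^{*+}_\lambda}$, so that the chosen representatives are genuinely pairwise non-isomorphic.

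I expect the main obstacle to be the positivity result that packages everything, namely that $\mathrm{HOM}(x,y)$ lives in non-negative degrees with the sharp degree-$0$ behaviour above; equivalently, the almost-orthonormality of the canonical basis together with the identification of graded hom spaces with the bilinear form. In practice this is the content of Sto\v si\'c's diagrammatic analysis: using the nilHecke idempotents $e_{i,a}$, the crossing and bubble relations \eqref{DubbleCrossii}--\eqref{DubbleCrossiir2} and \eqref{BullCrossij}--\eqref{R3iji}, and degree counting, one shows that every diagram representing a morphism of degree $\le 0$ between these specific indecomposables reduces either to $0$ or, in the endomorphism case in degree $0$, to a scalar multiple of the identity. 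The bulk of the work lies in bounding below the degree of a nonzero morphism between the $\e_1^{(a)}\e_2^{(b)}\e_1^{(c)}$-type and $\e_2^{(a)}\e_1^{(b)}\e_2^{(c)}$-type objects and in accounting for the single nontrivial isomorphism between them.
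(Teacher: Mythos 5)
The first thing to say is that the paper does not prove this Proposition: it is imported wholesale from the proof of Theorem~2 in \cite{Sto}, so there is no in-paper argument to compare yours against. Your opening reduction is the right way to read the statement --- absorb the shifts, view $\bigoplus_d\dup(x\la d\ra,y)$ as a graded group, and observe that the three clauses are exactly ``nothing on one side of degree zero, rank $\delta_{x,y}$ in degree zero, identity spans the degree-zero endomorphisms''; this is precisely the strongly upper-triangular condition that Corollary~\ref{TrUpH} needs. Your proposed route --- identify the graded rank of the hom space with the value of a bilinear form on the corresponding canonical basis elements and then quote almost orthonormality together with the non-negativity of graded ranks --- is a genuine alternative to Sto\v si\'c's direct diagrammatic analysis, and it is the more conceptual argument. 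What it buys is that the rank-two-specific work collapses into two quotable theorems; what it costs is that both theorems are themselves substantial.

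That cost is also where the gap sits, and you half-concede it yourself: the ``decisive input'' is asserted rather than proved, and your final paragraph retreats to ``in practice this is the content of Sto\v si\'c's diagrammatic analysis''. Concretely, (i) the identification of graded hom ranks with the bilinear form is the Khovanov--Lauda hom formula, whose proof rests on the nondegeneracy of the graphical calculus (that the spanning sets $\bb(x,y)$ are actually bases, \cite{KL3}) --- exactly the kind of diagrammatic bookkeeping you were hoping to sidestep; and (ii) almost orthonormality has to be checked for the normalization of the form that really computes graded ranks (the one carrying the $\prod_s(1-q^{2s})^{-1}$ factors from dots and bubbles) and with the variable matched to the direction of the shift $\la 1\ra$. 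Point (ii) is not cosmetic: with the conventions of Subsection~\ref{DefU} the dot is a nonzero element of $\dup\bigl(\e_1\onel\la t\ra,\e_1\onel\la t+2\ra\bigr)$, a space with $t-t'=-2<0$, so the inequality in the statement (and in your translation ``supported in degrees $d\ge 0$'') only comes out right once the direction of the grading is pinned down, which your sketch never does. For $\sl_3$ both inputs are true --- (ii) can even be verified by hand from \eqref{Lrel} --- so your strategy can be completed, but as written it is a reduction to two external theorems plus an appeal back to \cite{Sto}, not a proof.
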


\begin{thm}[{\cite[Theorem 4]{Sto}}]
\label{dec}
An arbitrary 1-morphism of $\dup$ can be decomposed as an direct sum of the elements from $\b^{*+}$.
\end{thm}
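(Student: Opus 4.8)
The plan is to reduce the statement to an explicit decomposition of products of divided powers and then to split idempotents. First I would observe that every $1$-morphism of $\up$ is a formal direct sum of grading shifts of words $\e_{i_1}\e_{i_2}\cdots\e_{i_n}\onel$ in the generators $\e_1,\e_2$, and that an arbitrary $1$-morphism of $\dup=\Kar(\up)$ is a summand of such a direct sum. Using the categorified divided-power identity $\e_i^a\onel\cong[a]!\bigl(\eia\onel\bigr)$ from \eqref{deE}, each such word is, up to grading shifts and multiplicities, a direct sum of alternating divided-power words $\e_{i_1}^{(a_1)}\e_{i_2}^{(a_2)}\cdots\e_{i_k}^{(a_k)}\onel$ with $i_j\neq i_{j+1}$. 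Hence it suffices to show that every alternating divided-power word is isomorphic in $\dup$ to a direct sum of elements of $\b^{*+}$, and then that summands of direct sums of elements of $\b^{*+}$ are again of this form.

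The engine of the reduction is a categorification of Lusztig's relation \eqref{Lrel}. Concretely, I would establish that for $b\leq a+c$ there is an isomorphism in $\dup$
\[
\e_1^{(a)}\e_2^{(b)}\e_1^{(c)}\onel\;\cong\;\bigoplus_{\substack{p+r=b\\ p\leq c,\ r\leq a}}\Bigl(\e_2^{(p)}\e_1^{(a+c)}\e_2^{(r)}\onel\Bigr)^{\oplus\left[\begin{smallmatrix}a+c-b\\ c-p\end{smallmatrix}\right]}
\]
together with its colour-swapped analogue, where each grading shift is forced by matching degrees and the quantum binomial $\left[\begin{smallmatrix}a+c-b\\ c-p\end{smallmatrix}\right]$ is read as a graded multiplicity via the functors $[m]$ of the homogeneously graded structure. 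This is the categorical lift (in the category, not merely in $K_0$) of \eqref{Lrel}, and it turns a non-canonical triple --- one whose middle block is strictly smaller than the sum of the outer blocks --- into a direct sum of triples of the opposite colouring whose middle block $a+c$ dominates, i.e.\ into elements of the list produced by the classification of indecomposable $1$-morphisms stated above.

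With this relation in hand I would induct on the number $k$ of blocks. Two-block words $\e_1^{(a)}\e_2^{(b)}\onel$ already lie in $\b^{*+}$ (as a type-$1$ element if $b\geq a$ and as a type-$2$ element if $a>b$), and a three-block word is either already canonical or is made canonical by a single application of the relation, since the resulting triples satisfy $p+r=b<a+c$. For $k\geq 4$ the key combinatorial point is that at least one consecutive triple must be non-canonical: for $\e_1^{(a_1)}\e_2^{(a_2)}\e_1^{(a_3)}\e_2^{(a_4)}\onel$, canonicity of both the first triple ($a_2\geq a_1+a_3$) and the last triple ($a_3\geq a_2+a_4$) would force $a_1=a_4=0$, contradicting that there are four genuine blocks. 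Applying the relation to a non-canonical triple and then merging the two adjacent like-coloured blocks via $\e_i^{(m)}\e_i^{(n)}\onel\cong\left[\begin{smallmatrix}m+n\\ n\end{smallmatrix}\right]\!\left(\e_i^{(m+n)}\onel\right)$ lowers the block count to $k-1$, so the induction closes. The hard part is exactly this step: proving that for every $k\geq 4$ a reducible triple always exists and, crucially, exhibiting a complexity measure on which the reduction strictly decreases so that the process terminates.

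Finally I would handle the passage to the full Karoubi envelope. An arbitrary object of $\dup$ is a pair $(x,e)$ with $x$ a direct sum of shifts of elements of $\b^{*+}$ by the previous steps and $e$ an idempotent endomorphism. By Proposition \ref{sup} the full subcategory of $\dup$ on $\b^{*+}$ is strongly upper-triangular: the Hom-spaces vanish outside the allowed degree and order, and the endomorphisms of each element are scalars. A standard Krull--Schmidt argument then splits $e$ compatibly with this upper-triangular structure, showing that $(x,e)$ is itself isomorphic to a direct sum of elements of $\b^{*+}$. This simultaneously verifies condition $(1)$ of $\b^+$ being a strongly upper-triangular basis, so that Proposition \ref{Sup} becomes applicable in the sequel.
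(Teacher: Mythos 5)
The paper does not actually prove Theorem \ref{dec}: it imports it verbatim from \cite[Theorem 4]{Sto}, so the only meaningful comparison is with Sto\v si\'c's argument, and your outline reproduces its overall architecture (reduce to alternating divided-power words via \eqref{deE}, categorify \eqref{Lrel}, induct on the number of blocks, then split idempotents). The genuine gap is that the engine of your reduction --- the isomorphism $\e_1^{(a)}\e_2^{(b)}\e_1^{(c)}\onel\cong\bigoplus_{p+r=b}\left[\begin{smallmatrix}a+c-b\\ c-p\end{smallmatrix}\right]\bigl(\e_2^{(p)}\e_1^{(a+c)}\e_2^{(r)}\onel\bigr)$ for $b\leq a+c$, together with the categorical lift of \eqref{EqBase} --- is asserted rather than proved. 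An identity in the Grothendieck group does not automatically lift to a direct sum decomposition of $1$-morphisms: one must exhibit explicit $2$-morphisms in $\dup$ forming a complete set of mutually orthogonal idempotents on $\e_1^{(a)}\e_2^{(b)}\e_1^{(c)}\onel$ whose images are the claimed summands with the claimed grading shifts. Constructing these splitters (the ``thick calculus'' for two colours) is essentially the entire technical content of \cite{Sto}; taking the isomorphism as given assumes something of the same order of difficulty as the theorem itself.

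Two smaller corrections. First, the step you single out as ``the hard part'' --- existence of a reducible consecutive triple for $k\geq 4$ and termination --- is in fact the routine part: your $k=4$ computation globalizes, since imposing $a_j\geq a_{j-1}+a_{j+1}$ at every interior position forces the outer blocks (and then all interior ones) to vanish, and each application of the relation followed by merging the like-coloured neighbours strictly decreases the number of blocks of every summand, so induction on $k$ closes immediately. Second, ``a standard Krull--Schmidt argument'' is not quite available, because $\dup(x,x)\cong\Z$ is not a local ring; what actually splits an idempotent endomorphism of $\bigoplus_i x_i\la t_i\ra$ is the strong upper-triangularity of Proposition \ref{sup}, which makes the idempotent an upper-triangular matrix with diagonal entries in $\{0,1\}$ and hence conjugate to a diagonal projection --- this is precisely the mechanism behind Proposition \ref{Sup}, and you should invoke that rather than Krull--Schmidt.
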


Now we can conclude the following.

\begin{cor}
\label{TrUpH}
\begin{enumerate}
\item For $\lambda,\mu\in \X$, the set $_\mu\b^{*+}_\lambda$ is a strongly upper-triangular basis of $\dup(\lambda,\mu)$.
\item For $\lambda,\mu\in \X$
\begin{equation}
\Tr(\up(\lambda,\mu))\cong\Tr(\dup(\lambda,\mu))\cong\Z{_\mu\b^{*+}_\lambda}\cong\Z[q,\q]{_\mu\b^+_\lambda}.
\end{equation}
\end{enumerate}
\end{cor}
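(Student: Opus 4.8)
The plan is to establish part~(1) directly from the results of Sto\v si\'c recalled above, and then read off part~(2) by feeding part~(1) into the general trace machinery of the previous section. For part~(1) I must check the two conditions defining a strongly upper-triangular basis of the additive category $\dup(\lambda,\mu)$, with $B={}_\mu\b^{*+}_\lambda$. The first condition, that every object (i.e.\ every $1$-morphism $\lambda\to\mu$) of $\dup(\lambda,\mu)$ is isomorphic to a direct sum of elements of ${}_\mu\b^{*+}_\lambda$, is exactly Theorem~\ref{dec}: an arbitrary $1$-morphism of $\dup$ decomposes into summands from $\b^{*+}$, and since a direct summand of a $1$-morphism $\lambda\to\mu$ again has source $\lambda$ and target $\mu$, each summand already lies in ${}_\mu\b^{*+}_\lambda$.

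For the second condition I would equip ${}_\mu\b^{*+}_\lambda$ with a partial order and verify that the full subcategory on these objects is strongly upper-triangular. Writing a general element of ${}_\mu\b^{*+}_\lambda$ as $x\la t\ra$ with $x\in{}_\mu\b^+_\lambda$ and $t\in\Z$, I set $x\la t\ra\leq y\la t'\ra$ iff either $t>t'$, or $t=t'$ and $x=y$; reflexivity, antisymmetry and transitivity are immediate. Proposition~\ref{sup} is tailored to give upper-triangularity for this order: a nonzero hom $\dup(x\la t\ra,y\la t'\ra)$ forces $t\geq t'$, and forces $x=y$ when $t=t'$, which is precisely the relation $x\la t\ra\leq y\la t'\ra$ between source and target. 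The same proposition states that the only endomorphisms of $x\la t\ra$ are integer multiples of the identity; as all $2$-morphism groups in $\u$ are free over $\Z$ and this persists in the Karoubi envelope, this yields $\dup(x\la t\ra,x\la t\ra)\cong\Z$, completing the strongly upper-triangular condition.

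Part~(2) is then a concatenation of three isomorphisms. The first, $\Tr(\up(\lambda,\mu))\cong\Tr(\dup(\lambda,\mu))$, is Proposition~\ref{Kar} applied to $\c=\up(\lambda,\mu)$, using $\dup(\lambda,\mu)=\Kar(\up(\lambda,\mu))$. The second, $\Tr(\dup(\lambda,\mu))\cong\Z\,{}_\mu\b^{*+}_\lambda$, is Proposition~\ref{Sup} applied to the additive category $\dup(\lambda,\mu)$ with the strongly upper-triangular basis ${}_\mu\b^{*+}_\lambda$ supplied by part~(1). The third, $\Z\,{}_\mu\b^{*+}_\lambda\cong\Z[q,\q]\,{}_\mu\b^+_\lambda$, is the grading bookkeeping: since $\dup(\lambda,\mu)$ is homogeneously graded one has $[x\la t\ra]=q^t[x]$ in the trace, so collecting the $\Z$-basis $\{x\la t\ra\}$ according to the underlying shift-free element $x\in{}_\mu\b^+_\lambda$ turns $\{[x]\}$ into a $\Z[q,\q]$-basis.

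Since every step quotes a result that is already available, I expect no genuine obstacle; the two points demanding care are the orientation of the partial order (larger grading shift must be the \emph{smaller} object, as dictated by the sign in Proposition~\ref{sup}) and ensuring that the final identification respects the $\Z[q,\q]$-module structure induced on the trace by the degree-shift functor, rather than being a mere isomorphism of abelian groups.
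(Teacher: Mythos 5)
Your proposal is correct and follows essentially the same route as the paper: part (1) is obtained by combining Theorem \ref{dec} (decomposability into elements of $\b^{*+}$) with Proposition \ref{sup} (vanishing of homs and $\Z$-endomorphisms), and part (2) chains Proposition \ref{Kar}, Proposition \ref{Sup}, and the observation $[x\la t\ra]=q^t[x\la 0\ra]$. The paper leaves the partial order implicit where you spell it out, but the substance is identical.
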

\begin{proof}
\begin{enumerate}
\item Straightforward from Proposition \ref{sup} and Theorem \ref{dec}.
\item The first isomorphism follows from proposition \ref{Kar}, and the second from (1) and proposition \ref{Sup}. The last one is clear from $q^t [x\la 0\ra]=[x\la t\ra]\in\Tr(\dup(\lambda,\mu))$ for $x\la 0\ra\in{_\mu\b^0_\lambda}$ and $t\in\Z$.
\end{enumerate}
\end{proof}

Let
\begin{equation}
\label{Ei}
\rE_i\onel:=\left[\Id_{\e_i\onel}\right],\quad\quad
\rEia\onel:=\left[\Id_{\eia\onel}\right],
\end{equation}
in $\Tr(\dup)$ for $i\in \I$, $a\in \X$. The isomorphisms from the corollary (2) map
\begin{equation*}
\left[e_{i,a}\colon \e_i^a\onel\left\la t-\tfrac{a(a-1)}{2}\right\ra\to\e_i^a\onel\left\la t-\tfrac{a(a-1)}{2}\right\ra
\right]\mapsto q^t\rEia\onel\mapsto \eia\onel\la t \ra \mapsto q^t\cdot\eia\onel.
\end{equation*}

%For $\lambda,\mu\in \X$ let
%\begin{equation}
%_\mu\dD^+_\lambda:=\H(\dup(\lambda,\mu)), \quad\quad
%_\mu D^+_\lambda:=\H(\up(\lambda,\mu)),
%\end{equation}
%and let $_\mu K^+_\lambda:=\Z{_\mu\b^{*+}_\lambda}$.

%Middle isomorphism from the corollary (2) $\Tr(\dup(\lambda,\mu))\to{_\mu K^+_\lambda}$ maps $[f]\mapsto f'$ where $f'\in[f]$ is the unique element of class $[f]$ in $_\mu K^+_\lambda$. We define $_\mu\delta^+_\lambda\colon _\mu D^+_\lambda\to _\mu K^+_\lambda$, $f\mapsto f'$.

%Let $D^+:=\bigoplus_{\lambda,\mu\in \X}{_\mu D^+_\lambda}$, $K^+:=\bigoplus_{\lambda,\mu\in \X}{_\mu K^+_\lambda}=\Z\b^{*+}$ and
%\begin{equation}
%\delta^+:=\bigoplus_{\lambda,\mu\in \X}{_\mu\delta^+_\lambda}\colon D^+\to K^+.
%\end{equation}

The next result, the trace of upper part of categorified quantum $\sl_3$, is not necessary because we do not need it later and it follows from the final result. But now it is easy to get it, so we prove it separately.

\begin{thm}
\label{poz}
There is an isomorphism
\begin{equation}
\Tr(\up)\cong\AdUp
\end{equation}
of $\Z[q,\q]$-algebras.
\end{thm}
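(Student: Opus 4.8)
The plan is to exhibit a $\Z[q,\q]$-algebra isomorphism $\gamma\colon\AdUp\to\Tr(\up)$ directly on generators. Define $\gamma$ to be the algebra homomorphism determined by $\gamma(\Eia 1_\lambda)=\rEia\onel:=\left[\Id_{\eia\onel}\right]$ for $i\in\I$, $a\in\Z^+$, $\lambda\in\X$, where the right-hand side is viewed inside $\Tr(\up)$ through the identification $\Tr(\up(\lambda,\mu))\cong\Tr(\dup(\lambda,\mu))$ of Corollary \ref{TrUpH}. Since composition in $\Tr(\up)$ satisfies $[\Id_{x}][\Id_{y}]=[\Id_{x\circ y}]$, any product of generators of the shape $\rE_1^{(a)}\rE_2^{(b)}\rE_1^{(c)}\onel$ equals $\left[\Id_{\e_1^{(a)}\e_2^{(b)}\e_1^{(c)}\onel}\right]$, and likewise with the colours exchanged; this is what makes $\gamma$ computable on the canonical basis.

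First I would check that $\gamma$ is well defined, i.e.\ that the elements $\rEia\onel$ satisfy the defining relations of $\AdUp$. The idempotent relations \eqref{relI}--\eqref{relII} are automatic, since $\Tr(\up)$ is by construction graded by the object set $\X$ and $\rEia\onel$ lives in the component indexed by $(\lambda,\lambda+a i')$. The divided-power relation $\Eia\Eib 1_\lambda=\left[\begin{smallmatrix}a+b\\b\end{smallmatrix}\right]E_i^{(a+b)}1_\lambda$ and the quantum Serre relation \eqref{relEE} both lift to honest direct-sum decompositions of $1$-morphisms in $\dup$ (Sto\v si\'c's categorified relations, of which the categorical form of \eqref{Lrel} is the key instance). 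Applying $\Tr$ to such a decomposition, and using additivity of the trace under direct sums together with $[p(f)]=p[f]$ for $p\in\Z^+[q,\q]$, turns each categorical decomposition into the corresponding identity in $\Tr(\up)$; in particular the categorified form of \eqref{Lrel} becomes \eqref{Lrel} itself. Hence all relations hold and $\gamma$ is a well-defined $\Z[q,\q]$-algebra homomorphism.

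Next I would prove that $\gamma$ is bijective by comparing bases. By \eqref{iota} the map $\iota^+$ is a bijection from the canonical basis $\dB^+$ of $\AdUp$ onto $\b^+$, and it restricts to a bijection ${_\mu\dB^+_\lambda}\to{_\mu\b^+_\lambda}$ for each pair $\lambda,\mu\in\X$. For a canonical basis element such as $E_1^{(a)}E_2^{(b)}E_1^{(c)}1_\lambda$, the computation above gives $\gamma\!\left(E_1^{(a)}E_2^{(b)}E_1^{(c)}1_\lambda\right)=\left[\Id_{\e_1^{(a)}\e_2^{(b)}\e_1^{(c)}\onel}\right]$, which under the isomorphism of Corollary \ref{TrUpH} is precisely the free $\Z[q,\q]$-module generator of $\Tr(\up(\lambda,\mu))$ indexed by $\iota^+\!\left(E_1^{(a)}E_2^{(b)}E_1^{(c)}1_\lambda\right)\in{_\mu\b^+_\lambda}$ (and symmetrically for the other family). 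Thus $\gamma$ carries the $\Z[q,\q]$-basis $\dB^+$ bijectively onto the $\Z[q,\q]$-basis of $\Tr(\up)$ furnished by Corollary \ref{TrUpH}, so $\gamma$ is a $\Z[q,\q]$-module isomorphism; being also an algebra homomorphism, it is the desired isomorphism of $\Z[q,\q]$-algebras.

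The main obstacle is the well-definedness step: everything hinges on the existence, inside $\dup$, of the categorified canonical-basis relations --- that products of divided-power $1$-morphisms decompose as direct sums of the indecomposables of Theorem \ref{dec} with graded multiplicities equal to the $\N[q,\q]$-coefficients appearing in \eqref{Lrel} (and in the divided-power and Serre relations). This categorification is exactly Sto\v si\'c's input; granting it, additivity of $\Tr$ reduces the whole multiplicative comparison to bookkeeping, and no genuinely new relation has to be verified by hand.
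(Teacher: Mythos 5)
Your strategy coincides with the paper's for the module-isomorphism half (both rest on Corollary \ref{TrUpH} and the bijection $\iota^+\colon\dB^+\to\b^+$), but diverges on how multiplicativity is established, and that is where there is a concrete gap. You propose to define $\gamma$ as ``the algebra homomorphism determined by $\gamma(\Eia 1_\lambda)=\rEia\onel$'' and then to ``check the defining relations of $\AdUp$.'' But $\AdUp$ is defined in the paper as a $\Z[q,\q]$-subalgebra of $\Up$ generated by divided powers, not by a presentation; until you fix a complete set of defining relations for this integral form over $\Z[q,\q]$, no homomorphism is ``determined'' by its values on generators, and your list (idempotent relations, divided-power products, \eqref{relEE}, \eqref{Lrel}) is not shown to be such a complete set. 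The paper sidesteps this entirely by reversing the order of the two steps: it first obtains the $\Z[q,\q]$-module isomorphism $[\iota^+]$ from Corollary \ref{TrUpH}, and only then verifies that it is multiplicative, working over $\Q(q)$ where $\dUp$ \emph{is} defined by the single Serre relation \eqref{relEE}; that one relation is verified in $\Tr(\du)$ by an explicit diagrammatic computation (Lemma \ref{rel1}, via Lemma \ref{Lem1} and relations \eqref{DubbleCrossij} and \eqref{R3iji}), and the integral forms are then matched because divided powers are built from the generators identically on both sides. Your argument is repairable along the same lines --- your bijectivity paragraph already contains the module isomorphism, after which multiplicativity can be checked on structure constants --- but then your well-definedness paragraph is doing no logical work.

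The second difference is in the input you rely on. Your multiplicative comparison hinges on the categorified form of \eqref{Lrel}: that $\e_1^{(a)}\e_2^{(b)}\e_1^{(c)}\onel$ decomposes in $\dup$ into the indecomposables of Theorem \ref{dec} with graded multiplicities equal to the quantum binomial coefficients in \eqref{Lrel}. This is true and is essentially the main theorem of \cite{Sto}, but it is strictly stronger than anything the paper imports (only the classification of indecomposables, the Hom-space computation of Proposition \ref{sup}, and the Krull--Schmidt decomposition of Theorem \ref{dec}). The paper deliberately avoids needing these explicit multiplicities by doing one diagrammatic computation and invoking the universal property of $\dUp$ over $\Q(q)$. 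Your route buys conceptual economy --- no string-diagram manipulation at all --- at the price of a heavier categorification input; if you use it, state it as the precise theorem of Sto\v si\'c being invoked rather than as bookkeeping, and note that iterating it (products of two canonical basis elements are longer monomials, not single instances of \eqref{Lrel}) is needed to compare all structure constants.
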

\begin{proof}
Proposition \ref{Kar} implies that $\Tr(\up)\cong\Tr(\dup)$, so it is enough to prove that $\Tr(\dup)\cong\AdUp$.

Corollary \ref{TrUpH} (2) implies that $\Tr(\dup)$ is generated by $\b^+$ as a $\Z[q,\q]$-module. Since there is a bijection between $\b^+$ and Lusztig's canonical basis $\dB^+$ of $\AdUp$ we have
\begin{equation*}
\Tr(\dup)\cong\AdUp
\end{equation*}
as a $\Z[q,\q]$-module. Let the isomorphism be $[\iota^+]\colon\AdUp\to\Tr(\dup)$. It maps $\Eia 1_\lambda\mapsto\rEia\onel$. To prove the theorem we need to check that elements multiply (or compose if we look to it as a 1-category) in the same way.

$\AdUp\otimes\Q(q)\cong\dUp$ is generated by $E_i1_\lambda$ for $i\in \I$, $\lambda\in \X$ as $\Q(q)$-algebra. Relation \eqref{deE} implies
\begin{equation}
\label{divT}
\rE_i^a\onel = [a]!\rEia\onel
\end{equation}
in $\Tr(\dup)$. Therefore, $\Tr(\dup)\otimes\Q(q)$ is generated by $\rE_i\onel=[\iota^+](E_i1_\lambda)$ for $i\in \I$, $\lambda\in \X$ as $\Q(q)$-algebra. Recall that defining relation of $\dUp$ is \eqref{relEE}:
\begin{equation*}
E_i^2E_j1_\lambda-(q+\q)E_iE_jE_i1_\lambda+E_jE_i^21_\lambda = 0 \quad \text{ if } j\neq i.
\end{equation*}
Lemma \ref{rel1} ensures that same relation holds in $\Tr(\dup)\otimes\Q(q)$ after applying $[\iota^+]$ . Divided powers, from which we construct the bases, are defined from above generators in the same way (compare \eqref{divT} and \eqref{div}) so it is really
\begin{equation*}
\Tr(\dup)\otimes\Q(q)\cong\AdUp\otimes\Q(q).
\end{equation*}
Since the multiplications are the same, they are also the same in the $\Z[q,\q]$-subalgebras $\Tr(\dup)$ and $\AdUp$, hence the proof.
\end{proof}

\section{The result}

In this section we prove that the trace of $\u$ is isomorphic to $\AdU$ as an algebra (or 1-category). First we need some notations.

From the definition, the set of all 1-morphisms of $\u$ is spanned by the set of non-zero compositions of $\e_i\onel$ and $\f_i\onel$, $i\in \I$, together with degree shift $t\in\Z$. Let us call this set of generating 1-morphisms $\hh$. We define the length $l\colon\hh\to\Z$ to be the total number of $\e_i$-s and $\f_i$-s in a generator. For fixed objects $\lambda,\mu\in \X$ let $\hhlm:=\hh\cap\Ob(\u(\lambda,\mu))$.

For a generating 1-morphisms $x,y\in\hhlm$, the space of 2-morphisms $f\colon x\to y$ is free $\Z$-module spanned by the base $\bb(x,y)$ consisted of 2-morphisms with diagrams whose strands have no self intersections, no two strands intersect more than once, all dots are confined to the beginning interval on each strand, and all closed diagrams are dotted bubbles with the same orientation for each $i\in \I$ at the far right of the diagram (see \cite[Subsection 3.2.]{KL3}) such as the diagram on the picture.
\begin{equation*}
\xy
(0,-13)*{};(10,13)*{} **\crv{(0,-2) & (10,-2)}?(1)*\dir{>} ?(.9)*{\bullet};
(5,-13)*{};(20,-13)*{} **\crv{(5,-3) & (20,-3)}?(1)*\dir{>} ?(.9)*{\bullet};
(10,-13)*{};(5,13)*{} **\crv{(10,-2) & (5,-2)}?(1)*\dir{>} ?(.9)*{\bullet} ?(.8)*{\bullet};
(25,-13)*{};(15,-13)*{} **\crv{(25,-6) & (15,-6)}?(1)*\dir{>};
(15,13)*{};(30,-13)*{} **\crv{(15,0) & (30,0)}?(1)*\dir{>};
(25,13)*{};(20,13)*{} **\crv{(25,9) & (20,9)}?(1)*\dir{>};
(35,4)*{\lbub};
(40,3)*{2}
\endxy
\end{equation*}
We can make sure that every strand has at most one turn, that all turns pointing down are below turns pointing up, and that all crossings of two strands without turns are in between.

%In particular, the endomorphism space $H:=\H(\u)$ is a free abelian group ($\Z$-module) with a basis $\bigcup_{x\in\bb}\bb(x,x)$.

We first prove that $\Tr(\u(\lambda,\mu))\cong\AdU(\lambda,\mu)$ as a module. Because of Proposition \ref{Dir}, it is enough to prove that $\Tr(\u(\lambda,\mu)|_{\hhlm})\cong\AdU(\lambda,\mu)$. We do it using Proposition \ref{mpp}. In the first subsection of this section we construct the projection $\pi$ needed in the Proposition. By the construction it will be clear that $\pi$ satisfies first two conditions from the Proposition. In the second subsection we prove that $\pi$ satisfies the condition 3.\ (trace property) too.

In third and the last subsection we check the relations and thus prove the claim, similarly to the proof of the Theorem \ref{poz}.

A 2-morphism $a\colon y\to z$ in 2-category $\C$ is graphically represented by
\begin{equation*}
\xy
 (-6,5);(6,5); **\dir{.};
 (8,5)*{y};
 (0,0)*{a};
 (-6,-5);(6,-5); **\dir{.};
 (8,-5)*{x};
\endxy
\end{equation*}
and a vertical composition, i.e.\ composition in hom category $\C(\lambda,\mu)$, of 2-morphisms $b\colon x\to y$ and $a\colon y\to z$ is represented by
\begin{equation*}
\xy
 (0,9)*{};
 (-6,8);(6,8); **\dir{.};
 (8,8)*{z};
 (0,4)*{a};
 (-6,0);(6,0); **\dir{.};
 (8,0)*{y};
 (0,-4)*{b};
 (-6,-8);(6,-8); **\dir{.};
 (8,-8)*{x};
 (0,-9)*{};
\endxy
\end{equation*}
If it is not essential, we omit 1-morphisms $x$, $y$ and $z$ and sometimes dotted lines.

\subsection{Construction of the projection $\pi$}
\label{Construct}

Let $K^+$ be the abelian group formally spanned by $\{\Id_x|x\in\b^{*+}\}$. It is a $\Z$-module over $\b^{*+}$ and can be seen as $\Z[q,\q]$-module over $\b^+$. There is an isomorphism $\iota^+\colon \AdUp\to K^+$ as a $\Z[q,\q]$-modules which expands bijection $\iota^+$ from \eqref{iota}. Similarly, let $K^-$ be abelian group formally spanned by $\{\Id_x|x\in\b^{*-}\}$. It is a $\Z$-module over $\b^{*-}$ and can be seen as $\Z[q,\q]$-module over $\b^-$. There is an isomorphism $\iota^-\colon \AdUm\to K^-$ as a $\Z[q,\q]$-modules.

Let $K:=\{f^-\circ f^+|f^-\in K^-,f^+\in K^+\}$. Clearly, it is a $\Z[q,\q]$-module over $\b:=\{b^-\circ b^+|b^-\in\b^-,\,b^+\in\b^+\}$. There is a natural bijection $\iota\colon \dB\to\b$ between the basis $\dB$ of $\AdU$ (defined in subsection \ref{basis}) and $\b$:
\begin{equation}
 b^-b^+\mxto{\iota}\iota^-(b^-)\circ\iota^+(b^+),
\end{equation}
which can be extended to isomorphism $\iota\colon \AdU\to K$ as $\Z[q,\q]$-modules.

Let us now fix objects $\lambda,\mu\in\X$. Let $\Klm:=K\cap\H(\u(\lambda,\mu))\cong 1_\mu(\AdU)1_\lambda$. Let
\begin{equation}
\dHlm:=\H(\du(\lambda,\mu)|_{\hhlm}).
\end{equation}
Our goal is to construct the projection $\pi\colon\dHlm\to \Klm$ which satisfies all conditions from Proposition \ref{mpp}. We do it in steps:
\begin{equation}
\label{Maps}
\dHlm \to \Hlm \to \Mlm \to \Dlm \to \Klm
\end{equation}
where $\Hlm:=\H(\u(\lambda,\mu)|_{\hhlm})$, $\Mlm$ is the submodule of $\Hlm$ without turns, i.e.\ generated by those 2-morphism which are generated by
\begin{equation*}
\xy
 (0,4);(0,-4); **\dir{-} ?(1)*\dir{>};
 (0,0)*{\bullet};
 (1,-4)*{ \scs i};
 (4,2)*{ \scs \lambda};
 (-6,0)*{};(6,0)*{};
\endxy \quad 
\xy
 (0,4);(0,-4); **\dir{-} ?(0)*\dir{<};
 (0,0)*{\bullet};
 (1,4)*{ \scs i};
 (4,2)*{ \scs \lambda};
 (-6,0)*{};(6,0)*{};
\endxy \quad
\xy
  (0,0)*{\xybox{
    (-3,-4)*{};(3,4)*{} **\crv{(-3,-1) & (3,1)}?(1)*\dir{>} ;
    (3,-4)*{};(-3,4)*{} **\crv{(3,-1) & (-3,1)}?(1)*\dir{>};
    (-2,-4)*{ \scs j};
    (4,-4)*{ \scs i};
     (7,1)*{\scs  \lambda};
     (-7,0)*{};(7,0)*{};
     }};
\endxy \quad
\xy
  (0,0)*{\xybox{
    (-3,4)*{};(3,-4)*{} **\crv{(-3,1) & (3,-1)}?(1)*\dir{>};
    (3,4)*{};(-3,-4)*{} **\crv{(3,1) & (-3,-1)}?(1)*\dir{>};
    (-2,4)*{ \scs j};
    (4,4)*{ \scs i};
     (7,1)*{\scs  \lambda};
     (-7,0)*{};(7,0)*{};
     }};
\endxy
\end{equation*}
for $i,j\in \I$, $\lambda\in \X$, and $\Dlm$ is the submodule of $\Mlm$ generated by 2-morphisms all of whose downward arrows are left of all upward arrows. More precisely, let $\kk^+\subset\hh$ be the set of 1-morphisms generated by upward arrows, i.e.\ by $\e_i\onel$, $\kk^-\subset\hh$ be the set of 1-morphisms generated by downward arrows and $\kk:=\{x^-x^+\in\hh|x^-\in \kk^-,x^+\in \kk^+\}$. Then $\Dlm\subset \Mlm$ is generated by 2-morphisms $f\colon x\to x$ for $x\in\kklm:=\kk\cap\Ob(\u(\lambda,\mu))$.

For easier understanding, we rewrite \eqref{Maps} using some example elements of the spaces:
\begin{equation*}
\left\{\left(e,
\xy
(0,-8)*{};(6,8)*{} **\crv{(0,-2) & (6,-2)}?(1)*\dir{>} ?(.93)*{\bullet};
(3,-8)*{};(12,-8)*{} **\crv{(3,-2) & (12,-2)}?(1)*\dir{>} ?(.9)*{\bullet};
(6,-8)*{};(3,8)*{} **\crv{(6,-2) & (3,-2)}?(1)*\dir{>};
(15,-8)*{};(9,-8)*{} **\crv{(15,-4) & (9,-4)}?(1)*\dir{>};
(9,8)*{};(18,-8)*{} **\crv{(9,0) & (18,0)}?(1)*\dir{>};
(12,8)*{};(0,8)*{} **\crv{(12,3) & (0,3)}?(1)*\dir{>} ?(.85)*{\bullet};
(18,8)*{};(15,8)*{} **\crv{(18,6) & (15,6)}?(1)*\dir{>};
(-1,-9)*{};(19,9)*{};
\endxy
,e\right)\right\}
\to
\left\{
\xy
(0,-8)*{};(6,8)*{} **\crv{(0,-2) & (6,-2)}?(1)*\dir{>} ?(.93)*{\bullet};
(3,-8)*{};(12,-8)*{} **\crv{(3,-2) & (12,-2)}?(1)*\dir{>} ?(.9)*{\bullet};
(6,-8)*{};(3,8)*{} **\crv{(6,-2) & (3,-2)}?(1)*\dir{>};
(15,-8)*{};(9,-8)*{} **\crv{(15,-4) & (9,-4)}?(1)*\dir{>};
(9,8)*{};(18,-8)*{} **\crv{(9,0) & (18,0)}?(1)*\dir{>};
(12,8)*{};(0,8)*{} **\crv{(12,3) & (0,3)}?(1)*\dir{>} ?(.85)*{\bullet};
(18,8)*{};(15,8)*{} **\crv{(18,6) & (15,6)}?(1)*\dir{>};
(-1,-9)*{};(19,9)*{};
\endxy
\right\}
\to
\left\{
\xy
(0,-8)*{};(6,8)*{} **\crv{(0,0) & (6,0)}?(1)*\dir{>} ?(.9)*{\bullet};
(3,8)*{};(3,-8)*{} **\crv{(3,4) & (5,4) & (5,-4) & (3,-4)}?(1)*\dir{>} ?(.9)*{\bullet};
(6,-8)*{};(0,8)*{} **\crv{(6,0) & (0,0)}?(1)*\dir{>};
(9,8);(9,-8); **\dir{-} ?(0)*\dir{<};
(-1,-9)*{};(10,9)*{};
\endxy
\right\}
\to
\left\{
\xy
(0,8);(0,-8); **\dir{-} ?(0)*\dir{<};
(3,8);(3,-8); **\dir{-} ?(0)*\dir{<};
(6,-8)*{};(9,8)*{} **\crv{(6,0) & (9,0)}?(1)*\dir{>} ?(.85)*{\bullet};
(9,-8)*{};(6,8)*{} **\crv{(9,0) & (6,0)}?(1)*\dir{>};
(-1,-9)*{};(10,9)*{};
\endxy
\right\}
\to
\left\{\f_1^{(2)}\e_2^{(2)}\right\}
\end{equation*}

First map is the natural projection $\alpha\colon\dHlm\to \Hlm$, $\alpha\colon (e,f,e)\mapsto (\Id,f,\Id)=f$. It is clear that $[\alpha(f)]=[f]$ in $\Tr(\du)$.

We define map $\beta\colon \Hlm\to \Hlm$ on base elements $b\in\bb(x,x)$ for $x\in\hhlm$: if $b$ has bubbles $\beta(b)=0$, otherwise we cut the 2-morphism just above turns pointing down, and switch two parts of it, for example:
\begin{equation}
\label{cut}
\xy
(-1,13);(31,13); **\dir{.};
(33,13)*{x};
(-1,-3);(31,-3); **\dir{.};
(35,-3)*{y\la t \ra};
(-1,-13);(31,-13); **\dir{.};
(33,-13)*{x};
(0,-13)*{};(10,13)*{} **\crv{(0,-2) & (10,-2)}?(1)*\dir{>} ?(.9)*{\bullet};
(5,-13)*{};(20,-13)*{} **\crv{(5,-3) & (20,-3)}?(1)*\dir{>} ?(.9)*{\bullet};
(10,-13)*{};(5,13)*{} **\crv{(10,-2) & (5,-2)}?(1)*\dir{>};
(25,-13)*{};(15,-13)*{} **\crv{(25,-6) & (15,-6)}?(1)*\dir{>};
(15,13)*{};(30,-13)*{} **\crv{(15,0) & (30,0)}?(1)*\dir{>};
(20,13)*{};(0,13)*{} **\crv{(20,5) & (0,5)}?(1)*\dir{>} ?(.9)*{\bullet} ?(.8)*{\bullet};
(30,13)*{};(25,13)*{} **\crv{(30,9) & (25,9)}?(1)*\dir{>};
\endxy\quad\quad\mxto{\beta}\quad q^t\quad
\xy
(-1,13);(31,13); **\dir{.};
(33,13)*{y};
(-1,3);(31,3); **\dir{.};
(36,3)*{x\la -t \ra};
(-1,-13);(31,-13); **\dir{.};
(33,-13)*{y};
(4,-13)*{};(10,3)*{} **\crv{(10,-6)} ?(.8)*{\bullet};
(0,3)*{};(4,13)*{} **\crv{(0,8)}?(1)*\dir{>};
(8,-13)*{};(5,3)*{} **\crv{(5,-6)};
(10,3)*{};(8,13)*{} **\crv{(10,8)}?(1)*\dir{>};
(26,13)*{};(30,3)*{} **\crv{(30,9)};
(15,3)*{};(26,-13)*{} **\crv{(15,-6) & (24,-11)}?(1)*\dir{>};
(5,3)*{};(20,3)*{} **\crv{(5,13) & (20,13)} ?(.9)*{\bullet};
(25,3)*{};(15,3)*{} **\crv{(25,10) & (15,10)};
(20,3)*{};(0,3)*{} **\crv{(20,-5) & (0,-5)} ?(.9)*{\bullet} ?(.8)*{\bullet};
(30,3)*{};(25,3)*{} **\crv{(30,-1) & (25,-1)};
\endxy
\end{equation}
Note the degree shift gained from moving bottom 1-morphism back to degree $0$. It is clear that $[\beta(f)]=[f]$ in $\Tr(\du)$, even if there are bubbles, because in that case $[f]=0$.

By repeating the map $\beta$ we will finally end up in $\Mlm$, and $\beta|_\Mlm=\Id_\Mlm$. So we can define a projection $\beta^\infty\colon \Hlm\to \Mlm$ which is equal to $\beta^n$ for sufficiently large $n$. It is our second map of \eqref{Maps}.

We define $g\colon\hhlm\to\hhlm$: if $x\in\kk$, $g(x)=x$; if there is $\f$ ($\f_1$ of $\f_2$) after $\e$ in $x$, $g$ switches the first such $\f$ with $\e$ before it. Similarly, we define $h\colon\hhlm\to\hhlm$: if $x\in\kk$, $h(x)=x$; if there is $\f$ after $\e$ in $x$, $h$ deletes the first such $\f$ and $\e$ before it if they are labelled by the same index $i$, and is $0$ if they are labeled by different indices.

We construct a map $\gamma\colon \Hlm\to \Hlm$ on base elements $f\colon x\to x$:
\begin{itemize}
\item if $x\in\kk$, $\gamma(f)=f$,
\item if $g$ switches $\f_j$ and $\e_i$ for $i\neq j$
\begin{equation}
\label{gamma1}
\xy
 (-6,5);(6,5); **\dir{.};
 (8,5)*{x};
 (0,0)*{f};
 (-6,-5);(6,-5); **\dir{.};
 (8,-5)*{x};
\endxy\quad\mxto{\gamma}\quad
\xy
 (-6,8);(6,8); **\dir{.};
 (10,8)*{g(x)};
 (0,6)*{\gch};
 (-6,4);(6,4); **\dir{.};
 (8,4)*{x};
 (0,0)*{f};
 (-6,-4);(6,-4); **\dir{.};
 (8,-4)*{x};
 (0,-6)*{\gcl};
 (-6,-8);(6,-8); **\dir{.};
 (10,-8)*{g(x)};
\endxy
\end{equation}
where $\gch$ and $\gcl$ are identity on all strands except those affected by $g$ where there is one simple crossing,
\item if $g$ switches $\f_i$ and $\e_i$
\begin{equation}
\label{gamma2}
\xy
 (-6,5);(6,5); **\dir{.};
 (8,5)*{x};
 (0,0)*{f};
 (-6,-5);(6,-5); **\dir{.};
 (8,-5)*{x};
\endxy\mxto{\gamma}
-\xy
 (-6,8);(6,8); **\dir{.};
 (10,8)*{g(x)};
 (0,6)*{\gch};
 (-6,4);(6,4); **\dir{.};
 (8,4)*{x};
 (0,0)*{f};
 (-6,-4);(6,-4); **\dir{.};
 (8,-4)*{x};
 (0,-6)*{\gcl};
 (-6,-8);(6,-8); **\dir{.};
 (10,-8)*{g(x)};
\endxy+\sum_{f_1+f_2+f_3=\nu_i-1}q^{-\nu_i+1+2f_3}
\xy
 (-8,12);(8,12); **\dir{.};
 (12,12)*{h(x)};
 (0,8)*{\gth};
 (-8,4);(8,4); **\dir{.};
 (20,4)*{x\la \nu_i-1-2f_3\ra};
 (0,0)*{f};
 (-8,-4);(8,-4); **\dir{.};
 (20,-4)*{x\la \nu_i-1-2f_3\ra};
 (0,-8)*{\gtl};
 (-8,-12);(8,-12); **\dir{.};
 (12,-12)*{h(x)};
\endxy
\end{equation}
where all additional 2-morphisms are the identity on all strands except for those effected by $g$, and $\nu$ is the label of the area next to them.
\end{itemize}

Note that, using relations \eqref{DubbleCrossijr} and \eqref{DubbleCrossiir1}, the map $\gamma$ is constructed precisely in the way such that $[\gamma(f)]=[f]$ in $\Tr(\du)$.

If fact, we are interested only in the restriction $\gamma\colon \Mlm\to \Hlm$. The composition $\beta^\infty\gamma\colon \Mlm\to \Mlm$ simplifies the source 1-morphism $x\in\hhlm$ in the way that it either has smaller length $l$, or has the same length, but has less $\f$-s after $\e$-s, or is the element of $\kk$. More precisely, $\beta^\infty\gamma$ of the base 2-morphism is a linear combination of base 2-morphisms with simpler source 1-morphism. Therefore, by repeating $\beta^\infty\gamma$ we finally end up in $\Dlm$ and $\beta^\infty\gamma|_\Dlm=\Id_\Dlm$. Similarly as before, we can define a projection $(\beta^\infty\gamma)^\infty\colon \Mlm\to \Dlm$ which is equal to $(\beta^\infty\gamma)^n$ for sufficiently large $n$. It is our third map of \eqref{Maps}.

We can caracterize $\Dlm$ in another way. Chose another object $\nu\in\X$. Let ${_\nu D_\lambda}^+\subset {_\nu M_\lambda}$ be generated by 2-morphisms $f\colon x^+\to x^+$ for $x^+\in{_\nu{\bf h}_\lambda}\cap\kk^+$. Similarly, let ${_\mu D_\nu}^-\subset {_\mu M_\nu}$ be generated by 2-morphisms $f\colon x^-\to x^-$ for $x^-\in{_\mu{\bf h}_\nu}\cap\kk^-$. Then, $\Dlm=\{f^-\circ f^+\in H|\nu\in\X, f^-\in {_\mu D_\nu}^-,f^+\in {_\nu D_\lambda}^+\}$.

${_\nu D_\lambda}^+$ is indeed the endomorphism space of $\up(\lambda,\nu)$. Theorem \ref{dec} ensures that every 1-morphism $x^+$ of $\up$ is isomorphic to the unique linear combination $\sum_ix^+_i$ where $x^+_i\in\b^{*+}$ with coordinates of the isomorphism $x^+\xto{\xi_i}x^+_i\xto{\xi'_i}x^+$. So it is $\sum_i\xi'_i\xi_i=\Id_{x^+}$. Let $\delta^+\colon {_\nu D_\lambda}^+\to {_\nu K_\lambda}^+:=K^+\cap\H(\u(\lambda,\nu))$ act as
\begin{equation}
\xy
 (-6,5);(6,5); **\dir{.};
 (9,5)*{x^+};
 (0,0)*{f^+};
 (-6,-5);(6,-5); **\dir{.};
 (9,-5)*{x^+};
\endxy\quad\mxto{\delta^+}\quad\sum_i
\xy
 (-6,10);(6,10); **\dir{.};
 (9,10)*{x_i^+};
 (0,7)*{\xi_i};
 (-6,4);(6,4); **\dir{.};
 (9,4)*{x^+};
 (0,0)*{f^+};
 (-6,-4);(6,-4); **\dir{.};
 (9,-4)*{x^+};
 (0,-7)*{\xi'_i};
 (-6,-10);(6,-10); **\dir{.};
 (9,-10)*{x_i^+};
\endxy
\end{equation}
Proposition \ref{sup} ensures that elements of the sum are multiples of the identity, hence the result is indeed in $K^+$. Clearly, $[\delta^+(f^+)]=[f^+]$ in $\Tr(\dup)$.

In the same way we define $\delta^-\colon {_\mu D_\nu}^-\to {_\mu K_\nu}^-:=K^-\cap\H(\u(\nu,\mu))$ and let $\delta:=\delta^-\circ\delta^+\colon \Dlm\to \Klm$, $f^-\circ f^+\mapsto \delta^-(f^-)\circ\delta^+(f^+)$. Clearly, $[\delta(f)]=[f]$ in $\Tr(\du)$. It is our last map of \eqref{Maps}.

No we can fill \eqref{Maps} with maps:
\begin{equation*}
\dHlm\, \xto{\alpha}\, \Hlm\, \xto{\beta^\infty}\, \Mlm\, 
\xy
 (6,0);(-6,0); **\dir{-}?(1)*\dir{>};
 (0,2)*{\scs (\beta^\infty\gamma)^\infty};
\endxy
\, \Dlm\, \xto{\delta}\, \Klm
\end{equation*}
and define $\pi\colon\dHlm\to \Klm$:
\begin{equation}
\pi:=\delta(\beta^\infty\gamma)^\infty\beta^\infty\alpha.
\end{equation}
It holds $[\pi(f)]=[f]$ in $\Tr(\du)$ for every $f\in\dHlm$.

\subsection{Trace property of the projection $\pi$}
\label{TraceP}

In what follows
\begin{equation*}
\left\{\xy
 (0,9)*{};
 (-6,8);(6,8); **\dir{--};
 (8,8)*{x};
 (0,4)*{a};
 (-6,0);(6,0); **\dir{--};
 (8,0)*{y};
 (0,-4)*{b};
 (-6,-8);(6,-8); **\dir{--};
 (8,-8)*{x};
 (0,-9)*{};
\endxy\right\}
\end{equation*}
means $\pi\left(\vc{a}{b}{x}{y}{x}\right)=\pi\left(\vc{b}{a}{y}{x}{y}\right)$ for 2-morphisms $a\colon y\to x$ and $b\colon x\to y$ in $\du(\lambda,\mu)$. We can omit 1-morphisms $x$ and $y$, but not the lines. Note that
\begin{equation}
\left\{\xy
 (0,9)*{};
 (-6,8);(6,8); **\dir{--};
 (0,4)*{a};
 (-6,0);(6,0); **\dir{--};
 (0,-4)*{b};
 (-6,-8);(6,-8); **\dir{--};
 (0,-9)*{};
\endxy\right\}\Leftrightarrow
\left\{\xy
 (0,9)*{};
 (-6,8);(6,8); **\dir{--};
 (0,4)*{b};
 (-6,0);(6,0); **\dir{--};
 (0,-4)*{a};
 (-6,-8);(6,-8); **\dir{--};
 (0,-9)*{};
\endxy\right\}, \quad\quad\quad
\left\{\xy
 (0,10)*{};
 (-6,9);(6,9); **\dir{--};
 (0,6)*{a};
 (-6,3);(6,3); **\dir{--};
 (0,0)*{b};
 (0,-6)*{c};
 (-6,-9);(6,-9); **\dir{--};
 (0,-10)*{};
\endxy\right\}\text{ and }
\left\{\xy
 (0,10)*{};
 (-6,9);(6,9); **\dir{--};
 (0,6)*{a};
 (0,0)*{b};
 (-6,-3);(6,-3); **\dir{--};
 (0,-6)*{c};
 (-6,-9);(6,-9); **\dir{--};
 (0,-10)*{};
\endxy\right\}\Rightarrow
\left\{\xy
 (0,10)*{};
 (-6,9);(6,9); **\dir{--};
 (0,6)*{b};
 (-6,3);(6,3); **\dir{--};
 (0,0)*{c};
 (0,-6)*{a};
 (-6,-9);(6,-9); **\dir{--};
 (0,-10)*{};
\endxy\right\}.
\end{equation}

For $x,y\in\hh$ we call a 2-morphism $a\colon x\to y$ drawable if there is a single diagram (not a linear combination) representing $a$.

\begin{prop}
For every 2-morphisms $a\colon x\to y$ and $b\colon y\to x$, $x,y\in\hhlm$, holds
\begin{equation*}
\left\{\xy
 (0,9)*{};
 (-6,8);(6,8); **\dir{--};
 (0,4)*{a};
 (-6,0);(6,0); **\dir{--};
 (0,-4)*{b};
 (-6,-8);(6,-8); **\dir{--};
 (0,-9)*{};
\endxy\right\}.
\end{equation*}
\end{prop}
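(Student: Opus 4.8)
The plan is to establish the trace property by reducing the claimed equality, which in the bracket notation means $\pi(a\circ b)=\pi(b\circ a)$, to the case in which one of the two pieces is a single generating $2$-morphism. Note first that we cannot shortcut the argument: although each of $\alpha$, $\beta$, $\gamma$, $\delta$ preserves the trace class, so that $[\pi(a\circ b)]=[a\circ b]=[b\circ a]=[\pi(b\circ a)]$ in $\Tr(\du)$, concluding $\pi(a\circ b)=\pi(b\circ a)$ from this would require injectivity of $\Klm\hookrightarrow\Tr(\u(\lambda,\mu))$, which is precisely the conclusion of Proposition~\ref{mpp} and hence not yet available. So the equality in $\Klm$ must be proved pointwise. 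By $\Z$-bilinearity of composition and $\Z$-linearity of $\pi$, I would first reduce to $a$ and $b$ drawable. Then, cutting the stacked diagram for $a\circ b$ into elementary horizontal slices $s_1,\dots,s_n$ (each a dot, an up--up or down--down crossing, a cup, or a cap, composed horizontally with identities), the assertion is exactly invariance of $\pi$ under the cyclic rotation $s_n\circ\cdots\circ s_1\mapsto s_1\circ s_n\circ\cdots\circ s_2$. It therefore suffices to prove the single step $\pi(r\circ s)=\pi(s\circ r)$, where $s$ is one generating slice and $r$ is the rest; iterating carries any slice all the way around the cylinder and yields the general statement.

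Next I would run the case analysis on the type of the moved slice $s$. The slices that remain within one orientation type --- dots and up--up (or down--down) crossings --- are the easy cases: they pass $\alpha$ and $\beta^\infty$ without interference, since $\beta$ and $\gamma$ act only in the neighbourhood of downward turnbacks and of $\e$-over-$\f$ reorderings, and the required invariance then follows from the NilHecke relations \eqref{DubbleCrossii}--\eqref{BullCrossii} applied within $\Mlm$ and $\Dlm$. The genuinely delicate cases are the turnbacks (cups and caps), because sliding such a slice across the seam of the cylinder changes the source $1$-morphism at the cut and forces $\pi$ to take a different route through $\Mlm$, $\Dlm$ and $\Klm$. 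Here the construction is tailored to the problem: $\beta$ is defined precisely by cutting just above the downward turns and swapping the two halves (see \eqref{cut}), so that moving a turnback around the cylinder is absorbed into a single application of $\beta^\infty$, while $\gamma$, built from relations \eqref{DubbleCrossijr} and \eqref{DubbleCrossiir1}--\eqref{DubbleCrossiir2} (see \eqref{gamma2}), handles the reordering of an $\e$ past an $\f$ with $[\gamma(f)]=[f]$ and the correct extra bubble and identity terms.

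To control the resulting terms I would argue by induction on the complexity of the source $1$-morphism, namely on the length $l$ together with the number of $\f$-s occurring after $\e$-s --- exactly the quantity that $\beta^\infty\gamma$ strictly decreases on the way down to $\Dlm$. In the inductive step, after moving $s$ one obtains two expressions for $\pi$ whose difference is a linear combination of diagrams of strictly smaller complexity; applying the relations of $\u$ rewrites each into its canonical form, and the induction hypothesis identifies the two sides. The base case, where the source already lies in $\kk$, is settled by $\delta=\delta^-\circ\delta^+$, which lands in $\Klm$ using Sto\v si\'c's decomposition (Theorem~\ref{dec}) and Proposition~\ref{sup} for the positive half $\up$ and the negative half $\um$ separately.

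The \textbf{main obstacle} I expect lies in the bookkeeping for the turnback cases: when a cup or cap is slid across the seam, the degree shift recorded by $\beta$ in \eqref{cut} and the bubble terms produced by $\gamma$ in \eqref{gamma2} must be shown to reassemble into exactly the same element of $\Klm$ on both sides, with no leftover discrepancy. This is where the precise coefficient $q^{-\nu_i+1+2f_3}$ and the summation range $f_1+f_2+f_3=\nu_i-1$ in the definition of $\gamma$ are essential, and checking that they are compatible with the cyclic rotation is the heart of the argument --- intricate but routine. I would organize it as a short sequence of sub-lemmas, one per generator type, each proved by the same induction, so that the proposition follows by composing the single-slice steps around the cylinder.
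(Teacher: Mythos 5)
Your overall strategy is the paper's: reduce to drawable base elements, reduce the trace property to the single step where one factor is a generating slice, iterate that step around the cylinder, treat dots and same-orientation crossings as the easy cases and cups/caps as the delicate ones, and settle the base case (source in $\kk$) via $\delta$ together with Sto\v si\'c's results (Theorem \ref{dec}, Proposition \ref{sup}). Your opening observation that one cannot deduce $\pi(a\circ b)=\pi(b\circ a)$ from $[a\circ b]=[b\circ a]$ without already knowing the injectivity of $\Klm\to\Tr(\u(\lambda,\mu))$ is exactly right and is why the pointwise argument is needed.

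The genuine gap is in your induction measure. You propose to induct on the length of the source $1$-morphism together with the number of $\f$-s occurring after $\e$-s, and you claim that after moving a slice the discrepancy is a linear combination of diagrams of strictly smaller complexity in this sense. That fails for the correction terms produced by the local relations you must invoke along the way: sliding a dot through a crossing via \eqref{BullCrossii} or \eqref{BullCrossij}, or resolving a double crossing or a curl via \eqref{DubbleCrossii}, \eqref{DubbleCrossiir1}--\eqref{DubbleCrossiir2}, \eqref{DubbleCrossijr} and \eqref{DubbleCrossij}, produces terms with the \emph{same} source and target $1$-morphism (so your measure does not decrease) but with strictly fewer crossings in the diagram. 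The paper therefore runs a double induction: an outer one on $\max(l(x),l(y))$ and, in the case where turnbacks are present, an inner one on the total number of crossings in $a$ and $b$ together; the three local-move lemmas are proved under that inner hypothesis. Separately, the case where $a\circ b$ lies in $\Mlm$ but the source is not yet in $\kk$ is not reached by sliding turnbacks at all and needs its own lemma, showing that conjugating by the crossing that $\gamma$ inserts is an equivalence because the complementary terms of $\Id_x$ coming from \eqref{gamma2} land in $\Mlm$ with shorter source, where the outer induction applies. Finally, in the base case it is not enough to say that $\delta$ lands in $\Klm$: one must check that both orders of composition give the same element of $\Klm$, which the paper does by writing $\delta^+(a^+\circ b^+)$ and $\delta^+(b^+\circ a^+)$ as the same sum $\sum_{i,j}c_{i,j}d_{i,j}\,\Id_{x_i}$ of products of the scalars supplied by the strong upper-triangularity in Corollary \ref{TrUpH}. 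With the induction measure corrected and these lemmas supplied, your outline becomes the paper's proof.
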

\begin{proof}
It is enough to prove the Proposition for base 2-morphisms $a$ and $b$. Clearly, all base elements are drawable.

If there are bubbles in $a$ or $b$, $\beta\left(\vcs{a}{b}\right)=\beta\left(\vcs{b}{a}\right)=0$, what implies $\tps{a}{b}$. So we can assume there are no bubbles in $a$ and $b$.

First suppose $\vcs{a}{b},\vcs{b}{a}\in \Dlm$. Then all downward arrows in $a$ and $b$ are before all upward arrows, so we can write $a=a^-a^+$, $b=b^-b^+$ where $a^-\colon y^-\to x^-$, $a^+\colon y^+\to x^+$, $b^-\colon x^-\to y^-$ and $b^+\colon x^+\to y^+$.

It is $x^+\cong\sum_ix^+_i$, $x^+_i\in\b^{*+}$ with coordinates of the isomorphism $x^+\xto{\xi_i}x^+_i\xto{\xi'_i}x^+$. Similarly, $y^+\cong\sum_jy^+_i$, $y^+_j\in\b^{*+}$ with coordinates of the isomorphism $y^+\xto{\nu_j}y^+_j\xto{\nu'_j}y^+$. We have
\begin{equation*}
\pi\left(\vcs{a^+}{b^+}\right)=
\delta^+\left(\vcs{a^+}{b^+}\right)=\sum_{i,j}
\xy
 (-6,16);(6,16); **\dir{.};
 (9,16)*{x_i^+};
 (0,13)*{\xi_i};
 (0,8)*{a^+};
 (0,3)*{\nu'_j};
 (-6,0);(6,0); **\dir{.};
 (9,0)*{y_j^+};
 (0,-3)*{\nu_j};
 (0,-8)*{b^+};
 (0,-13)*{\xi'_i};
 (-6,-16);(6,-16); **\dir{.};
 (9,-16)*{x_i^+};
\endxy, \quad\quad
\pi\left(\vcs{b^+}{a^+}\right)=
\delta^+\left(\vcs{b^+}{a^+}\right)=\sum_{i,j}
\xy
 (-6,16);(6,16); **\dir{.};
 (9,16)*{y_j^+};
 (0,13)*{\nu_j};
 (0,8)*{b^+};
 (0,3)*{\xi'_i};
 (-6,0);(6,0); **\dir{.};
 (9,0)*{x_i^+};
 (0,-3)*{\xi_i};
 (0,-8)*{a^+};
 (0,-13)*{\nu'_j};
 (-6,-16);(6,-16); **\dir{.};
 (9,-16)*{y_j^+};
\endxy
\end{equation*}
Because of the Corollary \ref{TrUpH} (1), elements of the upper sums are non-zero only if $x_i=y_j$, and then
\begin{equation*}
\xy
 (-6,8);(6,8); **\dir{.};
 (9,8)*{x_i^+};
 (0,5)*{\xi_i};
 (0,0)*{a^+};
 (0,-5)*{\nu'_j};
 (-6,-8);(6,-8); **\dir{.};
 (9,-8)*{y_j^+};
\endxy=c_{i,j}\Id_{x_i} \quad\quad\quad
\xy
 (-6,8);(6,8); **\dir{.};
 (9,8)*{y_j^+};
 (0,5)*{\nu_j};
 (0,0)*{b^+};
 (0,-5)*{\xi'_i};
 (-6,-8);(6,-8); **\dir{.};
 (9,-8)*{x_i^+};
\endxy=d_{i,j}\Id_{x_i}
\end{equation*}
for constants $c_{i,j},d_{i,j}\in\Z$. We make $c_{i,j}=d_{i,j}=0$ if $x_i\neq y_j$. Therefore,
\begin{equation*}
\pi\left(\vcs{a^+}{b^+}\right)=\sum_{i,j}c_{i,j}d_{i,j}\Id_{x_i}=\pi\left(\vcs{b^+}{a^+}\right),
\end{equation*}
so $\tp{a^+}{b^+}{x^+}{y^+}$. Similarly, $\tp{a^-}{b^-}{x^-}{y^-}$, and therefore $\tp{a}{b}{x}{y}$.

We continue the proof by induction on length of the longer source 1-morphism $l(x)$: suppose $\tp{a}{b}{x}{y}$ if $l(x),l(y)<n$. We need to prove the statement when $\max(l(x),l(y))=n$.

Let us do it first for $\vcs{a}{b}\notin \Mlm$, i.e.\ there are turns in $a$ and/or $b$. This we do by another induction, on total number of crossings in $a$ and $b$: suppose $\tp{a}{b}{x}{y}$ if there are less than $k$ crossings of strands in $a$ and $b$ together. We need to prove the statement when $\max(l(x),l(y))=n$ and $a$ and $b$ have exactly $k$ crossings. Without loss of generality, let $l(x)=n$. We need three lemmas.

\begin{lem}
Let $l(x),l(y)\leq n$ and $a$ and $b$ have together exactly $k$ crossings. Under current induction hypothesis
\begin{equation*}
\left\{\xy
 (0,9)*{};
 (-6,8);(6,8); **\dir{--};
 (0,4)*{a};
 (-6,0);(6,0); **\dir{--};
 (0,-4)*{b};
 (-6,-8);(6,-8); **\dir{--};
 (0,-9)*{};
\endxy\right\}\Leftrightarrow
\left\{\xy
 (0,9)*{};
 (-6,8);(6,8); **\dir{--};
 (0,4)*{a'};
 (-6,0);(6,0); **\dir{--};
 (0,-4)*{b};
 (-6,-8);(6,-8); **\dir{--};
 (0,-9)*{};
\endxy\right\}
\end{equation*}
if we transform $a$ to $a'$ using local moves
\begin{equation*}
\xy
  (0,0)*{\xybox{
    (-3,-5)*{};(3,5)*{} **\crv{(-3,-1) & (3,1)}?(1)?(.3)*{\bullet};
    (3,-5)*{};(-3,5)*{} **\crv{(3,-1) & (-3,1)}?(1);
    (-5,0)*{};(5,0)*{};
  }};
\endxy\leftrightarrow
\xy
  (0,0)*{\xybox{
    (-3,-5)*{};(3,5)*{} **\crv{(-3,-1) & (3,1)}?(1)?(.7)*{\bullet};
    (3,-5)*{};(-3,5)*{} **\crv{(3,-1) & (-3,1)}?(1);
    (-5,0)*{};(5,0)*{};
  }};
\endxy\quad\quad
\xy
  (0,0)*{\xybox{
    (-3,-5)*{};(3,5)*{} **\crv{(-3,-1) & (3,1)}?(1);
    (3,-5)*{};(-3,5)*{} **\crv{(3,-1) & (-3,1)}?(1)?(.3)*{\bullet};
    (-5,0)*{};(5,0)*{};
  }};
\endxy\leftrightarrow
\xy
  (0,0)*{\xybox{
    (-3,-5)*{};(3,5)*{} **\crv{(-3,-1) & (3,1)}?(1);
    (3,-5)*{};(-3,5)*{} **\crv{(3,-1) & (-3,1)}?(1)?(.7)*{\bullet};
    (-5,0)*{};(5,0)*{};
  }};
\endxy
\end{equation*}
\begin{equation*}
\xy
 (-6,-7)*{};(6,7)*{} **\crv{(-6,-1) & (6,1)};
 (6,-7)*{};(-6,7)*{} **\crv{(6,-1) & (-6,1)};
 (0,-7)*{};(0,7)*{} **\crv{(0,-4) & (-4,-3) & (-4,3) & (0,4)};
 (-8,0)*{};(8,0)*{};
\endxy\leftrightarrow
\xy
 (-6,-7)*{};(6,7)*{} **\crv{(-6,-1) & (6,1)};
 (6,-7)*{};(-6,7)*{} **\crv{(6,-1) & (-6,1)};
 (0,-7)*{};(0,7)*{} **\crv{(0,-4) & (4,-3) & (4,3) & (0,4)};
 (-8,0)*{};(8,0)*{};
\endxy
\end{equation*}
for any colouring and orientation of strands.
\end{lem}
\begin{proof}
Regardless of the colouring and the orientation of the strands it is
\begin{equation*}
\xy
  (0,0)*{\xybox{
    (-3,-5)*{};(3,5)*{} **\crv{(-3,-1) & (3,1)}?(1)?(.3)*{\bullet};
    (3,-5)*{};(-3,5)*{} **\crv{(3,-1) & (-3,1)}?(1);
    (-5,0)*{};(5,0)*{};
  }};
\endxy=
\xy
  (0,0)*{\xybox{
    (-3,-5)*{};(3,5)*{} **\crv{(-3,-1) & (3,1)}?(1)?(.7)*{\bullet};
    (3,-5)*{};(-3,5)*{} **\crv{(3,-1) & (-3,1)}?(1);
    (-5,0)*{};(5,0)*{};
  }};
\endxy + g
\end{equation*}
where $g$ is something without the crossing, if not $0$ (see \eqref{BullCrossii} and \eqref{BullCrossij}). Therefore $a=a'+G$ where $G$ has one crossing less than $a$, so by induction hypothesis $\tps{G}{b}$ holds. It is
\begin{equation*}
\pi\left(\vcs{a}{b}\right)=\pi\left(\vcs{a'}{b}\right)+\pi\left(\vcs{G}{b}\right),
\end{equation*}
\begin{equation*}
\pi\left(\vcs{b}{a}\right)=\pi\left(\vcs{b}{a'}\right)+\pi\left(\vcs{b}{G}\right).
\end{equation*}
By subtracting upper equations we get $\tps{a}{b}\Leftrightarrow\tps{a'}{b}$. Other moves are similar.
\end{proof}

\begin{lem}
Let $l(x),l(y)\leq n$ and $a$ and $b$ have together exactly $k$ crossings. Under current induction hypothesis
\begin{equation*}
\left\{\xy
 (0,9)*{};
 (-6,8);(6,8); **\dir{--};
 (0,4)*{a};
 (-6,0);(6,0); **\dir{--};
 (0,-4)*{b};
 (-6,-8);(6,-8); **\dir{--};
 (0,-9)*{};
\endxy\right\}
\end{equation*}
is true if $a$ or $b$ contains
\begin{equation*}
\xy
(-3,0)*{};(3,6)*{} **\crv{(-3,3) & (3,3)};
(3,0)*{};(-3,6)*{} **\crv{(3,3) & (-3,3)};
(-3,-6)*{};(3,0)*{} **\crv{(-3,-3) & (3,-3)};
(3,-6)*{};(-3,0)*{} **\crv{(3,-3) & (-3,-3)};
(-6,0)*{};(6,0)*{};
\endxy,\quad\quad
\xy
(0,-6)*{};(0,6)*{} **\crv{(0,3) & (6,3) & (6,-3) & (0,-3)};
(-3,0)*{};(8,0)*{};
\endxy\quad\text{or}\quad
\xy
(0,-6)*{};(0,6)*{} **\crv{(0,3) & (-6,3) & (-6,-3) & (0,-3)};
(3,0)*{};(-8,0)*{};
\endxy
\end{equation*}
for any colouring and orientation of strands.
\end{lem}
\begin{proof}
Regardless of the colouring and the orientation of the strands
\begin{equation*}
\xy
(-3,0)*{};(3,6)*{} **\crv{(-3,3) & (3,3)};
(3,0)*{};(-3,6)*{} **\crv{(3,3) & (-3,3)};
(-3,-6)*{};(3,0)*{} **\crv{(-3,-3) & (3,-3)};
(3,-6)*{};(-3,0)*{} **\crv{(3,-3) & (-3,-3)};
(-6,0)*{};(6,0)*{};
\endxy = g
\end{equation*}
where $g$ is a sum of drawable 2-morphisms without crossings, or $0$ (see \eqref{DubbleCrossii}, \eqref{DubbleCrossiir1}, \eqref{DubbleCrossiir2}, \eqref{DubbleCrossijr} and \eqref{DubbleCrossij}). Therefore $a$ can be shown to be a linear combination of drawable 2-morphisms with fewer crossings than in $a$, so by induction hypothesis $\tps{a}{b}$ holds. The other cases are similar.
\end{proof}

\begin{lem}
Let $l(x)=n$, $l(y)<n$. Under current induction hypothesis
\begin{equation*}
\left\{\xy
 (0,9)*{};
 (-6,8);(6,8); **\dir{--};
 (8,8)*{x};
 (0,4)*{a};
 (-6,0);(6,0); **\dir{--};
 (8,0)*{y};
 (0,-4)*{b};
 (-6,-8);(6,-8); **\dir{--};
 (8,-8)*{x};
 (0,-9)*{};
\endxy\right\}\Leftrightarrow
\left\{\xy
 (0,9)*{};
 (-6,8);(6,8); **\dir{--};
 (8,8)*{x};
 (0,4)*{a'};
 (-6,0);(6,0); **\dir{--};
 (8,0)*{y'};
 (0,-4)*{b'};
 (-6,-8);(6,-8); **\dir{--};
 (8,-8)*{x};
 (0,-9)*{};
\endxy\right\}
\end{equation*}
if we switch from left to right term using local moves across the middle line
\begin{equation*}
\xy
 (-6,0);(6,0); **\dir{--};
 (0,-6);(0,6); **\dir{-};
 (0,3)*{\bullet};
\endxy\quad\leftrightarrow\quad
\xy
 (-6,0);(6,0); **\dir{--};
 (0,-6);(0,6); **\dir{-};
 (0,-3)*{\bullet};
\endxy\quad\quad\quad
\xy
 (-6,0);(6,0); **\dir{--};
 (-3,0)*{};(3,6)*{} **\crv{(-3,3) & (3,3)};
 (3,0)*{};(-3,6)*{} **\crv{(3,3) & (-3,3)};
 (-3,-6);(-3,0); **\dir{-};
 (3,-6);(3,0); **\dir{-};
\endxy\quad\leftrightarrow\quad
\xy
 (-6,0);(6,0); **\dir{--};
 (-3,0);(-3,6); **\dir{-};
 (3,0);(3,6); **\dir{-};
 (-3,-6)*{};(3,0)*{} **\crv{(-3,-3) & (3,-3)};
 (3,-6)*{};(-3,0)*{} **\crv{(3,-3) & (-3,-3)};
\endxy
\end{equation*}
for any colouring and orientation of strands.
\end{lem}
\begin{proof}
For the first move we need to prove
$\left\{\xy
 (0,6.5)*{};
 (-3,6);(3,6); **\dir{--};
 (5,6)*{x};
 (0,4)*{a};
 (0,0)*{\gb};
 (-3,-2);(3,-2); **\dir{--};
 (5,-2)*{x};
 (0,-4)*{b};
 (-3,-6);(3,-6); **\dir{--};
 (5,-6)*{x};
 (0,-6.5)*{};
\endxy\right\}\Leftrightarrow
\left\{\xy
 (0,6.5)*{};
 (-3,6);(3,6); **\dir{--};
 (5,6)*{x};
 (0,4)*{a};
 (-3,2);(3,2); **\dir{--};
 (5,2)*{y};
 (0,0)*{\gb};
 (0,-4)*{b};
 (-3,-6);(3,-6); **\dir{--};
 (5,-6)*{x};
 (0,-6.5)*{};
\endxy\right\}$.
Indeed,
$\left\{\xy
 (0,6.5)*{};
 (-3,6);(3,6); **\dir{--};
 (5,6)*{y};
 (0,4)*{\gb};
 (-3,2);(3,2); **\dir{--};
 (5,2)*{y};
 (0,0)*{b};
 (0,-4)*{a};
 (-3,-6);(3,-6); **\dir{--};
 (5,-6)*{y};
 (0,-6.5)*{};
\endxy\right\}$
holds by the induction hypothesis because $l(y)<n$, what implies the equivalence. Other move is similar.
\end{proof}

It is enough to prove $\tp{g}{b}{x}{y}$ where $g$ is a generator and $b$ is the base element. We cut $b$ into $\vc{b'}{b''}{y}{z}{x}$ just above turns pointing down, like in \eqref{cut}. Note that $l(z)<n$ because $\vc{g}{b}{x}{y}{x}\notin \Mlm$. There are two cases:
\begin{enumerate}[i)]
\item Let $g=\gb$ or $g=\gc$ for any colouring and orientation of strands. Using moves from three lemmas above
$\left\{\xy
 (0,6.5)*{};
 (-3,6);(3,6); **\dir{--};
 (0,4)*{g};
 (0,0)*{b'};
 (-3,-2);(3,-2); **\dir{--};
 (0,-4)*{b''};
 (-3,-6);(3,-6); **\dir{--};
 (0,-6.5)*{};
\endxy\right\}$
is either true, or equivalent with  $\tps{B'}{B''}$ where $\vcs{B'}{B''}$ is the base element cut as in definition \eqref{cut}, so it is $\beta\left(\vcs{B'}{B''}\right)=\vcs{B''}{B'}$. Acting with $\pi=\delta(\beta^\infty\gamma)^\infty\beta^\infty\alpha$ on both sides, we get $\pi\left(\vcs{B'}{B''}\right)=\pi\left(\vcs{B''}{B'}\right)$, i.e.\ $\tps{B'}{B''}$, so
$\left\{\xy
 (0,6.5)*{};
 (-3,6);(3,6); **\dir{--};
 (0,4)*{g};
 (0,0)*{b'};
 (-3,-2);(3,-2); **\dir{--};
 (0,-4)*{b''};
 (-3,-6);(3,-6); **\dir{--};
 (0,-6.5)*{};
\endxy\right\}$
is true. Similarly,
$\left\{\xy
 (0,6.5)*{};
 (-3,6);(3,6); **\dir{--};
 (0,4)*{b'};
 (-3,2);(3,2); **\dir{--};
 (0,0)*{b''};
 (0,-4)*{g};
 (-3,-6);(3,-6); **\dir{--};
 (0,-6.5)*{};
\endxy\right\}$
is true, so
$\left\{\xy
 (0,6.5)*{};
 (-3,6);(3,6); **\dir{--};
 (0,4)*{g};
 (-3,2);(3,2); **\dir{--};
 (0,0)*{b'};
 (0,-4)*{b''};
 (-3,-6);(3,-6); **\dir{--};
 (0,-6.5)*{};
\endxy\right\}\Leftrightarrow \tps{g}{b}$
is proven.
\item Let $g=\gt$ for any colouring and orientation of strand. 
$\left\{\xy
 (0,6.5)*{};
 (-3,6);(3,6); **\dir{--};
 (0,4)*{\gt};
 (0,0)*{b'};
 (-3,-2);(3,-2); **\dir{--};
 (0,-4)*{b''};
 (-3,-6);(3,-6); **\dir{--};
 (0,-6.5)*{};
\endxy\right\}$
is already the base element $\vcs{B'}{B''}$ cut as in definition \eqref{cut}, so it is true.
$\left\{\xy
 (0,6.5)*{};
 (-3,6);(3,6); **\dir{--};
 (0,4)*{b'};
 (-3,2);(3,2); **\dir{--};
 (0,0)*{b''};
 (0,-4)*{\gt};
 (-3,-6);(3,-6); **\dir{--};
 (0,-6.5)*{};
\endxy\right\}$
is true by induction hypothesis (both source 1-morphisms have length less than $n$), so $\tps{\gt}{b}$ holds. For 
$g=\xy
(-2,-1)*{};(2,-1)*{} **\crv{(-2,2) & (2,2)};
\endxy$
the proof is the same, just turned upside down.
\end{enumerate}

We still need to prove the proposition for $\vcs{a}{b}\in \Mlm$ but $\vcs{a}{b}\notin \Dlm$ or $\vcs{b}{a}\notin \Dlm$, i.e.\ $x\notin\kk$ or $y\notin\kk$. under induction on $\max(l(x),l(y))$. Note that $\vcs{a}{b}\in \Mlm$ implies $l(x)=l(y)$.

\begin{lem}
Let $l(x)=l(y)=n$ and $x\notin\kk$. Under current induction hypothesis
\begin{equation*}
\left\{\xy
 (0,9)*{};
 (-6,8);(6,8); **\dir{--};
 (8,8)*{x};
 (0,4)*{a};
 (-6,0);(6,0); **\dir{--};
 (8,0)*{y};
 (0,-4)*{b};
 (-6,-8);(6,-8); **\dir{--};
 (8,-8)*{x};
 (0,-9)*{};
\endxy\right\}\Leftrightarrow
\left\{\xy
 (0,11)*{};
 (-6,10);(6,10); **\dir{--};
 (10,10)*{g(x)};
 (0,8)*{\gch};
 (0,3)*{a};
 (-6,0);(6,0); **\dir{--};
 (10,0)*{y};
 (0,-3)*{b};
 (0,-8)*{\gcl};
 (-6,-10);(6,-10); **\dir{--};
 (10,-10)*{g(x)};
 (0,-11)*{};
\endxy\right\}
\end{equation*}
where $\gch$ and $\gcl$ are identity on all strands except those effected by $g$ where is a single crossing, like in \eqref{gamma1} or \eqref{gamma2}.
\end{lem}
\begin{proof}
We write definitions \eqref{gamma1} and \eqref{gamma2} in the general expression
\begin{equation*}
\gamma\left(\xy
 (0,9)*{};
 (-6,8);(6,8); **\dir{.};
 (8,8)*{x};
 (0,4)*{a};
 (-6,0);(6,0); **\dir{.};
 (8,0)*{y};
 (0,-4)*{b};
 (-6,-8);(6,-8); **\dir{.};
 (8,-8)*{x};
 (0,-9)*{};
\endxy\right)=
\pm\quad\xy
 (-6,10);(6,10); **\dir{.};
 (10,10)*{g(x)};
 (0,8)*{\gch};
 (-6,6);(6,6); **\dir{.};
 (8,6)*{x};
 (0,3)*{a};
 (-6,0);(6,0); **\dir{.};
 (8,0)*{y};
 (0,-3)*{b};
 (-6,-6);(6,-6); **\dir{.};
 (8,-6)*{x};
 (0,-8)*{\gcl};
 (-6,-10);(6,-10); **\dir{.};
 (10,-10)*{g(x)};
\endxy+\quad k\sum_i\quad
\xy
 (-6,10);(6,10); **\dir{.};
 (10,10)*{h(x)};
 (0,8)*{g'_i};
 (-6,6);(6,6); **\dir{.};
 (8,6)*{x};
 (0,3)*{a};
 (-6,0);(6,0); **\dir{.};
 (8,0)*{y};
 (0,-3)*{b};
 (-6,-6);(6,-6); **\dir{.};
 (8,-6)*{x};
 (0,-8)*{g_i};
 (-6,-10);(6,-10); **\dir{.};
 (10,-10)*{h(x)};
\endxy
\end{equation*}
where $\pm \, \vcs{\gcl}{\gch}\, +k\sum_i \, \vcs{g_i}{g'_i}=\Id_x$, so
\begin{equation*}
\xy
 (0,9)*{};
 (-6,8);(6,8); **\dir{.};
 (8,8)*{y};
 (0,4)*{b};
 (-6,0);(6,0); **\dir{.};
 (8,0)*{x};
 (0,-4)*{a};
 (-6,-8);(6,-8); **\dir{.};
 (8,-8)*{y};
 (0,-9)*{};
\endxy\quad=\pm\quad
\xy
 (-6,10);(6,10); **\dir{.};
 (8,10)*{y};
 (0,7)*{b};
 (-6,4);(6,4); **\dir{.};
 (8,4)*{x};
 (0,2)*{\gcl};
 (-6,0);(6,0); **\dir{.};
 (10,0)*{g(x)};
 (0,-2)*{\gch};
 (-6,-4);(6,-4); **\dir{.};
 (8,-4)*{x};
 (0,-7)*{a};
 (-6,-10);(6,-10); **\dir{.};
 (8,-10)*{y};
\endxy\quad + k\sum_i\quad
\xy
 (-6,10);(6,10); **\dir{.};
 (8,10)*{y};
 (0,7)*{b};
 (-6,4);(6,4); **\dir{.};
 (8,4)*{x};
 (0,2)*{g_i};
 (-6,0);(6,0); **\dir{.};
 (10,0)*{h(x)};
 (0,-2)*{g'_i};
 (-6,-4);(6,-4); **\dir{.};
 (8,-4)*{x};
 (0,-7)*{a};
 (-6,-10);(6,-10); **\dir{.};
 (8,-10)*{y};
\endxy
\end{equation*}
Note that last term is in $\Mlm$, so
$\left\{\xy
 (0,8.5)*{};
 (-3,8);(3,8); **\dir{--};
 (0,6)*{g'_i};
 (0,2)*{a};
 (-3,0);(3,0); **\dir{--};
 (0,-2)*{b};
 (0,-6)*{g_i};
 (-3,-8);(3,-8); **\dir{--};
 (0,-8.5)*{};
\endxy\right\}$.
By acting with $\pi$ on two upper relations and subtracting them, we get the equivalence.
\end{proof}

Using the lemma we can move $x$ and $y$ to $\kk$, i.e.\ get $\vcs{a}{b},\vcs{b}{a}\in \Dlm$ for which we have already proven the proposition, what concludes the proof of the proposition.
\end{proof}

\begin{cor}
\label{TrUH}
For every par of 2-morphisms $(e,a,e')\colon (x,e)\to (y,e')$ and $(e',b,e)\colon (y,e')\to (x,e)$ in $\du$ it holds
\begin{equation*}
\left\{\xy
 (0,9)*{};
 (-8,8);(8,8); **\dir{--};
 (0,4)*{(e,a,e')};
 (-8,0);(8,0); **\dir{--};
 (0,-4)*{(e',b,e)};
 (-8,-8);(8,-8); **\dir{--};
 (0,-9)*{};
\endxy\right\}.
\end{equation*}
\end{cor}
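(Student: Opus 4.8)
The plan is to reduce the trace property in $\du$ to the one already established in $\u$ by the preceding Proposition, exploiting the fact that the projection $\pi$ was built to factor through the map $\alpha$ that forgets the bracketing idempotents. Recall $\pi=\delta(\beta^\infty\gamma)^\infty\beta^\infty\alpha$; write $\pi':=\delta(\beta^\infty\gamma)^\infty\beta^\infty\colon\Hlm\to\Klm$, so that $\pi=\pi'\alpha$ on $\dHlm$. On a $2$-morphism coming from $\u$ (i.e.\ one with trivial idempotents) the map $\alpha$ acts as the identity, hence $\pi$ and $\pi'$ agree on such elements, and the preceding Proposition is exactly the assertion $\pi'(a\circ b)=\pi'(b\circ a)$ for all $a\colon x\to y$ and $b\colon y\to x$ in $\u$ with $x,y\in\hhlm$.

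First I would compute the two vertical composites inside $\du(\lambda,\mu)|_{\hhlm}$. By the definition of composition in the Karoubi envelope one has $(e,a,e')\circ(e',b,e)=(e',a\circ b,e')$ and $(e',b,e)\circ(e,a,e')=(e,b\circ a,e)$, where $a\circ b\colon y\to y$ and $b\circ a\colon x\to x$ are the composites, computed in $\u$, of the underlying $2$-morphisms $a\colon x\to y$ and $b\colon y\to x$. Applying $\alpha$ discards the idempotents, giving $\alpha\big((e',a\circ b,e')\big)=a\circ b$ and $\alpha\big((e,b\circ a,e)\big)=b\circ a$ as elements of $\Hlm$. Consequently $\pi\big((e',a\circ b,e')\big)=\pi'(a\circ b)$ and $\pi\big((e,b\circ a,e)\big)=\pi'(b\circ a)$. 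Since $(x,e)$ and $(y,e')$ are objects of $\du(\lambda,\mu)|_{\hhlm}$ we have $x,y\in\hhlm$, so $a$ and $b$ are precisely morphisms to which the preceding Proposition applies; it yields $\pi'(a\circ b)=\pi'(b\circ a)$, and combining the two identities gives $\pi\big((e,a,e')\circ(e',b,e)\big)=\pi\big((e',b,e)\circ(e,a,e')\big)$, which is the claimed trace property. This is exactly condition (3) of Proposition \ref{mpp} for the category $\du(\lambda,\mu)|_{\hhlm}$.

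I do not expect a genuine obstacle here: the entire content is that $\pi$ ignores the idempotent data through $\alpha$, so the statement for $\du$ collapses onto the statement for $\u$. The only items requiring (routine) care are that each of the four maps $\alpha$, $\beta^\infty$, $(\beta^\infty\gamma)^\infty$ and $\delta$ is $\Z$-linear, so the reduction also covers the case where $a$ or $b$ is a linear combination of base $2$-morphisms, and that the preceding Proposition is stated for arbitrary $2$-morphisms rather than only base ones, so that no additional reduction to a basis is needed before invoking it.
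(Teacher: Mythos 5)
Your proposal is correct and follows essentially the same route as the paper: both reduce the claim to the already-proved Proposition for $\u$ by observing that the Karoubi composites $(e,a,e')\circ(e',b,e)$ and $(e',b,e)\circ(e,a,e')$ are sent by $\alpha$ to $a\circ b$ and $b\circ a$ in $\Hlm$, so that $\pi$ of the two composites coincides by the trace property already established in $\u(\lambda,\mu)|_{\hhlm}$. The paper merely inserts the intermediate rewriting via $(e,a,\Id)$ and $(\Id,b,e)$ before applying $\alpha$, which is a cosmetic difference.
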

\begin{proof}
It holds $\vcsl{(e,a,e')}{(e',b,e)}=\vcsl{(e,a,\Id)}{(\Id,b,e)}$ and $\alpha\left(\vcsl{(e,a,\Id)}{(\Id,b,e)}\right)=\vcsl{(\Id,a,\Id)}{(\Id,b,\Id)}=\vcs{a}{b}$. Similarly, $\alpha\left(\vcsl{(e',b,e)}{(e,a,e')}\right)=\vcs{b}{a}$. Acting with $\pi$ on both equalities concludes the proof.
\end{proof}

\subsection{Checking the relations and the final proof}

We have already defined $\rE_i\onel$ and $\rEia\onel$ in $\Tr(\dup)\subset\Tr(\du)$ (see \eqref{Ei}). Let
\begin{equation}
\rF_i\onel:=\left[\Id_{\f_i\onel}\right],\quad\quad
\rFia\onel:=\left[\Id_{\fia\onel}\right],
\end{equation}
in $\Tr(\du)$ for $i\in \I$, $a\in \X$, $t\in\Z$.

Note that in $\Tr(\du)$
\begin{equation}
\left[\xy
 (0,8)*{};
 (-6,8);(6,8); **\dir{.};
 (0,4)*{a};
 (-6,0);(6,0); **\dir{.};
 (0,-4)*{b};
 (-6,-8);(6,-8); **\dir{.};
 (0,-10)*{};
\endxy\right]=q^t
\left[\xy
 (0,8)*{};
 (-6,8);(6,8); **\dir{.};
 (0,4)*{b};
 (-6,0);(6,0); **\dir{.};
 (0,-4)*{a};
 (-6,-8);(6,-8); **\dir{.};
 (0,-10)*{};
\endxy\right]
\end{equation}
where $t=\deg(b)=-\deg(a)$ and that $[c\circ d]=0$ if $\deg(c)\neq 0$.

\begin{lem}
\begin{equation}
\label{Lem1}
\left[\xy
(-3,-5);(-3,5); **\dir{-} ?(0)*\dir{<};
(3,-5);(3,5); **\dir{-} ?(0)*\dir{<};
(-5,0)*{};(5,0)*{};
\endxy\right] = 
\left(1+q^2\right)\left[\xy
(0,0)*{\xybox{
 (-3,-5)*{};(3,5)*{} **\crv{(-3,-1) & (3,1)}?(1)*\dir{>};
 (3,-5)*{};(-3,5)*{} **\crv{(3,-1) & (-3,1)}?(1)*\dir{>}?(.7)*{\bullet};
 (-5,0)*{};(5,0)*{};
}};
\endxy\right] =
-\left(1+q^2\right)\left[\xy
(0,0)*{\xybox{
 (-3,-5)*{};(3,5)*{} **\crv{(-3,-1) & (3,1)}?(1)*\dir{>}?(.7)*{\bullet};
 (3,-5)*{};(-3,5)*{} **\crv{(3,-1) & (-3,1)}?(1)*\dir{>};
 (-5,0)*{};(5,0)*{};
}};
\endxy\right]
\end{equation}
\end{lem}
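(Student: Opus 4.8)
The plan is to read the three diagrams as elements of the two-strand nilHecke algebra acting on $\e_i\e_i\onel$. Write $\tau$ for the crossing $\e_i\e_i\onel\to\e_i\e_i\onel$ (of degree $-2$) and $x_1,x_2$ for the two dots (each of degree $+2$), the subscript recording which output strand carries the dot. With this bookkeeping the left diagram is $\Id_{\e_i\e_i\onel}$, the middle diagram is $x_1\tau$ (dot above the crossing on the strand ending at the left), and the right diagram is $x_2\tau$, all of total degree $0$. The nilHecke relation \eqref{BullCrossii} is precisely the pair of identities $\Id=\tau x_1-x_2\tau=x_1\tau-\tau x_2$, so the whole lemma is an identity among the three trace classes $[\Id]$, $[x_1\tau]$ and $[x_2\tau]$.

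First I would record the effect of cyclicity. Applying the trace relation $[ab]=q^{\,t}[ba]$ with $t=\deg(b)$, recorded immediately before the lemma, to slide a dot (degree $+2$) from above the crossing to below it gives $[\tau x_1]=q^{2}[x_1\tau]$ and $[\tau x_2]=q^{2}[x_2\tau]$. Taking traces of the two forms of \eqref{BullCrossii} and substituting these then yields the two linear relations
\[
[\Id]=q^{2}[x_1\tau]-[x_2\tau],\qquad [\Id]=[x_1\tau]-q^{2}[x_2\tau].
\]

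Subtracting the two relations gives $(q^{2}-1)\bigl([x_1\tau]+[x_2\tau]\bigr)=0$, and the only genuine point is the cancellation of $q^{2}-1$ needed to conclude $[x_2\tau]=-[x_1\tau]$. This cancellation is legitimate because all three diagrams consist of upward strands only, so they are $2$-morphisms in the positive part $\dup(\lambda,\mu)$; by Corollary \ref{TrUpH} the group $\Tr(\dup(\lambda,\mu))$ is \emph{free} over $\Z[q,\q]$, hence torsion-free, and $q^{2}-1$ is a nonzero element of the domain $\Z[q,\q]$. The identity, established there, is carried into $\Tr(\du)$ along the map $\Tr(\dup(\lambda,\mu))\to\Tr(\du(\lambda,\mu))$ induced by the inclusion $\dup\hookrightarrow\du$. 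Substituting $[x_2\tau]=-[x_1\tau]$ back into either relation gives $[\Id]=(1+q^{2})[x_1\tau]$, and rewriting the right-hand side gives $[\Id]=-(1+q^{2})[x_2\tau]$, which is the claim.

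The hard part is exactly this cancellation: over a general $\Z[q,\q]$-module the two displayed equations pin the classes down only up to $(1-q^{4})$-torsion, so the argument must be carried out where torsion-freeness is available, namely inside $\Tr(\dup)$ via Corollary \ref{TrUpH}. A convenient shortcut for producing the same torsion relation is to observe that $x_1+x_2$ is symmetric and therefore commutes with $\tau$ in the nilHecke algebra; cyclicity applied to the endomorphism $\tau(x_1+x_2)=(x_1+x_2)\tau$ immediately gives $(q^{2}-1)\bigl[(x_1+x_2)\tau\bigr]=0$, after which the same freeness input finishes the proof.
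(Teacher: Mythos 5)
Your proof is correct, but it takes a genuinely different route from the paper's. Your bookkeeping is accurate: with $\tau$ the crossing and $x_1,x_2$ the dots, cyclicity gives $[\tau x_i]=q^{2}[x_i\tau]$, the two forms of \eqref{BullCrossii} give exactly your two displayed relations, and their difference is $(q^{2}-1)\bigl([x_1\tau]+[x_2\tau]\bigr)=0$. The paper never meets this torsion relation: it inserts a double crossing, which vanishes by \eqref{DubbleCrossii}, to write $x_1\tau=x_1\tau(\tau x_1-x_2\tau)=-x_1\tau x_2\tau$, rotates the bottom crossing (degree $-2$) around the trace, and simplifies again to obtain $[x_1\tau]=-q^{-2}[\tau x_2]$ outright; combined with $[x_1\tau]-[\tau x_2]=[\Id]$ this yields $(1+q^{2})[x_1\tau]=[\Id]$ with no division at all. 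In effect the paper proves the relation $[x_1\tau]+[x_2\tau]=0$ directly from $\tau^{2}=0$ (a relation you never use), whereas you recover it by cancelling $q^{2}-1$, which forces you to import the structural fact that $\Tr(\dup(\lambda,\mu))$ is a free $\Z[q,\q]$-module (Corollary \ref{TrUpH}), and hence ultimately Sto\v si\' c's classification of indecomposables. That input is legitimately available: Corollary \ref{TrUpH} is established before this lemma and does not depend on it, the three diagrams are endomorphisms of $\e_i\e_i\onel$ in the positive part, and pushing the identity forward along $\Tr(\dup(\lambda,\mu))\to\Tr(\du(\lambda,\mu))$ is harmless, so there is no circularity or gap. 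The trade-off is that your argument makes a purely diagrammatic identity depend on the decomposition theory of $\dup$, while the paper's computation is local, elementary, and stays entirely within the defining relations of $\u$; on the other hand your version isolates cleanly where the only obstruction ($(1-q^{4})$-torsion) could live, which is a useful observation.
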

\begin{proof}
It holds
\begin{equation*}
\left[\xy
(0,0)*{\xybox{
 (-3,-5)*{};(3,5)*{} **\crv{(-3,-1) & (3,1)}?(1)*\dir{>};
 (3,-5)*{};(-3,5)*{} **\crv{(3,-1) & (-3,1)}?(1)*\dir{>}?(.7)*{\bullet};
 (-5,0)*{};(5,0)*{};
}};
\endxy\right] \eqr{\eqref{DubbleCrossii},\eqref{BullCrossii}}
-\left[\xy
 (-3,0)*{};(3,6)*{} **\crv{(-3,3) & (3,3)}?(1)*\dir{>};
 (3,0)*{};(-3,6)*{} **\crv{(3,3) & (-3,3)}?(1)*\dir{>}?(.7)*{\bullet};
 (3,0)*{\bullet};
 (-4.5,-1.5);(5,-1.5); **\dir{.};
 (-3,-6)*{};(3,0)*{} **\crv{(-3,-3) & (3,-3)};
 (3,-6)*{};(-3,0)*{} **\crv{(3,-3) & (-3,-3)};
 (-5,0)*{};(5,0)*{};
\endxy\right] =
-q^{-2}\left[\xy
 (-3,0)*{};(3,6)*{} **\crv{(-3,3) & (3,3)}?(1)*\dir{>};
 (3,0)*{};(-3,6)*{} **\crv{(3,3) & (-3,3)}?(1)*\dir{>};
 (-3,0)*{\bullet};
 (-3,-6)*{};(3,0)*{} **\crv{(-3,-3) & (3,-3)};
 (3,-6)*{};(-3,0)*{} **\crv{(3,-3) & (-3,-3)}?(.3)*{\bullet};
 (-5,0)*{};(5,0)*{};
\endxy\right] \eqr{\eqref{DubbleCrossii},\eqref{BullCrossii}}
-q^{-2}\left[\xy
(0,0)*{\xybox{
 (-3,-5)*{};(3,5)*{} **\crv{(-3,-1) & (3,1)}?(1)*\dir{>};
 (3,-5)*{};(-3,5)*{} **\crv{(3,-1) & (-3,1)}?(1)*\dir{>}?(.3)*{\bullet};
 (-5,0)*{};(5,0)*{};
}};
\endxy\right]
\end{equation*}
so
\begin{equation*}
\left(1+q^2\right)\left[\xy
(0,0)*{\xybox{
 (-3,-5)*{};(3,5)*{} **\crv{(-3,-1) & (3,1)}?(1)*\dir{>};
 (3,-5)*{};(-3,5)*{} **\crv{(3,-1) & (-3,1)}?(1)*\dir{>}?(.7)*{\bullet};
 (-5,0)*{};(5,0)*{};
}};
\endxy\right] =
\left[\xy
(0,0)*{\xybox{
 (-3,-5)*{};(3,5)*{} **\crv{(-3,-1) & (3,1)}?(1)*\dir{>};
 (3,-5)*{};(-3,5)*{} **\crv{(3,-1) & (-3,1)}?(1)*\dir{>}?(.7)*{\bullet};
 (-5,0)*{};(5,0)*{};
}};
\endxy\right] -
\left[\xy
(0,0)*{\xybox{
 (-3,-5)*{};(3,5)*{} **\crv{(-3,-1) & (3,1)}?(1)*\dir{>};
 (3,-5)*{};(-3,5)*{} **\crv{(3,-1) & (-3,1)}?(1)*\dir{>}?(.3)*{\bullet};
 (-5,0)*{};(5,0)*{};
}};
\endxy\right] \eqr{\eqref{BullCrossii}}
\left[\xy
(-3,-5);(-3,5); **\dir{-} ?(0)*\dir{<};
(3,-5);(3,5); **\dir{-} ?(0)*\dir{<};
(-5,0)*{};(5,0)*{};
\endxy\right].
\end{equation*}
Other relation is shown in a similar way.
\end{proof}

\begin{lem}
\label{rel1}
In $\Tr(\du)$
\begin{equation}
\rE_i^2\rE_j\onel-\left(q+\q\right)\rE_i\rE_j\rE_i\onel+\rE_j\rE_i^2\onel = 0 \quad \text{ if } j\neq i,
\end{equation}
\begin{equation}
\rF_i^2\rF_j\onel-\left(q+\q\right)\rF_i\rF_j\rF_i\onel+\rF_j\rF_i^2\onel = 0 \quad \text{ if } j\neq i.
\end{equation}
\end{lem}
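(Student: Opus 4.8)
The plan is to establish the $\rE$-relation inside $\Tr(\dup)\subset\Tr(\du)$; the $\rF$-relation then follows by the identical argument in the negative part $\dum\cong\dup$. Throughout, $\{i,j\}=\{1,2\}$.

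First I would reduce the quantum Serre relation to a relation among divided powers. Setting $a=2$ in the decomposition \eqref{deE} gives $\e_i^2\onel\cong\e_i^{(2)}\onel\la 1\ra\oplus\e_i^{(2)}\onel\la -1\ra$; taking the trace of the identity $2$-morphism and using that $\Tr$ is additive over biproducts yields $\rE_i^2\onel=(q+\q)\rE_i^{(2)}\onel$ at every object. Composing on the appropriate side with $\rE_j\onel$ gives
\[
\rE_i^2\rE_j\onel=(q+\q)\,\rE_i^{(2)}\rE_j\onel,\qquad \rE_j\rE_i^2\onel=(q+\q)\,\rE_j\rE_i^{(2)}\onel,
\]
so the left-hand side of the Serre relation equals $(q+\q)\bigl(\rE_i^{(2)}\rE_j\onel+\rE_j\rE_i^{(2)}\onel-\rE_i\rE_j\rE_i\onel\bigr)$. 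It therefore suffices to prove
\[
\rE_i\rE_j\rE_i\onel=\rE_i^{(2)}\rE_j\onel+\rE_j\rE_i^{(2)}\onel,
\]
which is the trace-level shadow of relation \eqref{Lrel} with $a=c=b=1$ (there the coefficients are $\left[\begin{array}{c}1\\0\end{array}\right]=\left[\begin{array}{c}1\\1\end{array}\right]=1$).

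The key step is the direct sum decomposition of $1$-morphisms in $\dup$
\[
\e_i\e_j\e_i\onel\cong\e_i^{(2)}\e_j\onel\oplus\e_j\e_i^{(2)}\onel,
\]
with no grading shift. I would obtain it from Theorem \ref{dec} together with the classification of the indecomposables of $\dup$: the source and target objects force any indecomposable summand to contain two $\e_i$-strands and one $\e_j$-strand, and the only elements of $\b^{*+}$ of this type are $\e_i^{(2)}\e_j\onel$ and $\e_j\e_i^{(2)}\onel$ (both lie in $\b^{*+}$ since they are of the form $\e_j^{(0)}\e_i^{(2)}\e_j^{(1)}\onel$ and $\e_j^{(1)}\e_i^{(2)}\e_j^{(0)}\onel$, with middle exponent strictly larger than the sum of the outer ones), whereas $\e_i\e_j\e_i\onel$ itself is decomposable. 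I expect this to be the only real obstacle: pinning down that each summand occurs with multiplicity one and with no grading shift. This is precisely the categorified Serre relation already contained in the results of Khovanov--Lauda and Sto\v si\' c (it matches the coefficient-one expansion in \eqref{Lrel}), so it requires no new computation.

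Finally I would take the trace of this decomposition. By invariance of $\Tr$ under isomorphism, $\rE_i\rE_j\rE_i\onel=[\Id_{\e_i\e_j\e_i\onel}]=[\Id_{\e_i^{(2)}\e_j\onel\oplus\e_j\e_i^{(2)}\onel}]$, and additivity of $\Tr$ over biproducts rewrites the right-hand side as $[\Id_{\e_i^{(2)}\e_j\onel}]+[\Id_{\e_j\e_i^{(2)}\onel}]=\rE_i^{(2)}\rE_j\onel+\rE_j\rE_i^{(2)}\onel$. This is exactly the reduced relation, so the Serre relation for the $\rE_i$ holds. Running the same argument in $\dum$ (equivalently, applying the symmetry exchanging $\e$ and $\f$) gives the relation for the $\rF_i$, completing the proof.
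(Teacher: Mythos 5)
Your proof is correct, but it takes a genuinely different route from the paper. The paper argues entirely inside the trace by diagrammatic computation: it first proves (Lemma \ref{Lem1}) that $\rE_i^2\onel$ equals $\pm(1+q^2)$ times the trace of a dotted crossing, then slides pieces of the diagram around the cylinder and applies the mixed $R2$ relation \eqref{DubbleCrossij} and the $R3$ relation \eqref{R3iji} to produce $(q+\q)\rE_i\rE_j\rE_i\onel$ directly; it never invokes any direct sum decomposition beyond \eqref{deE}, so it is self-contained given the local relations of $\u$. You instead lift the identity to the level of $1$-morphisms in $\dup$, using \eqref{deE} with $a=2$ together with the categorified Serre decomposition $\e_i\e_j\e_i\onel\cong\e_i^{(2)}\e_j\onel\oplus\e_j\e_i^{(2)}\onel$, and then push it down with additivity of $\Tr$ over biproducts and isomorphism invariance. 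This is cleaner and more conceptual, but it rests on an input the paper never states: your weight argument via Theorem \ref{dec} and the classification of $\b^{*+}$ only shows that the indecomposable summands of $\e_i\e_j\e_i\onel$ are drawn from $\{\e_i^{(2)}\e_j\onel\la t\ra,\ \e_j\e_i^{(2)}\onel\la t\ra\}$, and, as you acknowledge, it does not by itself pin down the multiplicities and shifts. That decomposition is indeed the standard categorified Serre relation of Khovanov--Lauda (and is implicit in Sto\v si\'c's decomposition algorithm), so citing it precisely closes the gap; alternatively one could fix multiplicities by a graded-dimension count or by exhibiting the explicit splitting idempotents. With that reference supplied, your argument is complete, and the same symmetry you invoke for the $\rF$-case is exactly what the paper does as well.
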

\begin{proof}
\begin{equation*}
 \rE_2\rE_1^2\onel + \rE_1^2\rE_2\onel \eqr{\eqref{Lem1}}
\left(1+q^2\right)\left[\xy
(0,0)*{\xybox{
 (-9,-5);(-9,5); **[blue]\dir{-} ?(0)*[blue]\dir{<};
 (-3,-5)*{};(3,5)*{} **[red]\crv{(-3,-1) & (3,1)}?(1)*[red]\dir{>};
 (3,-5)*{};(-3,5)*{} **[red]\crv{(3,-1) & (-3,1)}?(1)*[red]\dir{>}?(.7)*{\bullet};
 (-11,0)*{};(5,0)*{};
}};
\endxy\right]
- \left(1+q^2\right)
\left[\xy
(0,0)*{\xybox{
 (-3,-5)*{};(3,5)*{} **[red]\crv{(-3,-1) & (3,1)}?(1)*[red]\dir{>}?(.7)*{\bullet};
 (3,-5)*{};(-3,5)*{} **[red]\crv{(3,-1) & (-3,1)}?(1)*[red]\dir{>};
 (9,-5);(9,5); **[blue]\dir{-} ?(0)*[blue]\dir{<};
 (-5,0)*{};(11,0)*{};
}};
\endxy\right] = 
\end{equation*}
\begin{equation*}
\eqr{\eqref{DubbleCrossij}} \left(1+q^2\right)\left[\xy
(0,0)*{\xybox{
 (3,-7)*{};(-3,7)*{} **[red]\crv{(3,-3) & (-8,-3) & (-8,4) & (-3,4)}?(1)*[red]\dir{>};
 (-3,-7)*{};(3,7)*{} **[red]\crv{(-3,-1) & (3,1)}?(1)*[red]\dir{>}?;
 (-9,-7)*{};(-9,7)*{} **[blue]\crv{(-9,-3) & (-4,-2) & (-4,4) & (-9,4)}?(1)*[blue]\dir{>};
 (-10.5,1);(5,1); **\dir{.};
 (5,0)*{};(-11,0)*{};
}};
\endxy\right]
-\left(1+q^2\right)\left[\xy
(0,0)*{\xybox{
 (-3,-7)*{};(3,7)*{} **[red]\crv{(-3,-3) & (8,-3) & (8,4) & (3,4)}?(1)*[red]\dir{>};
 (3,-7)*{};(-3,7)*{} **[red]\crv{(3,-1) & (-3,1)}?(1)*[red]\dir{>}?;
 (9,-7)*{};(9,7)*{} **[blue]\crv{(9,-3) & (4,-2) & (4,4) & (9,4)}?(1)*[blue]\dir{>};
 (-4.5,1);(11,1); **\dir{.};
 (-5,0)*{};(11,0)*{};
}};
\endxy\right]=
\end{equation*}
\begin{equation*}
=\left(q+\q\right)\left[\xy
(0,0)*{\xybox{
 (-6,-7)*{};(6,7)*{} **[red]\crv{(-6,-1) & (6,1)}?(1)*[red]\dir{>};
 (6,-7)*{};(-6,7)*{} **[red]\crv{(6,-1) & (-6,1)}?(1)*[red]\dir{>};
 (0,-7)*{};(0,7)*{} **[blue]\crv{(0,-4) & (-4,-3) & (-4,3) & (0,4)}?(1)*[blue]\dir{>};
 (-7,0)*{};(7,0)*{};
}};
\endxy\right]
-\left(q+\q\right)\left[\xy
(0,0)*{\xybox{
 (-6,-7)*{};(6,7)*{} **[red]\crv{(-6,-1) & (6,1)}?(1)*[red]\dir{>};
 (6,-7)*{};(-6,7)*{} **[red]\crv{(6,-1) & (-6,1)}?(1)*[red]\dir{>};
 (0,-7)*{};(0,7)*{} **[blue]\crv{(0,-4) & (4,-3) & (4,3) & (0,4)}?(1)*[blue]\dir{>};
 (-7,0)*{};(7,0)*{};
}};
\endxy\right] \eqr{\eqref{R3iji}}
\left(q+\q\right)\rE_1\rE_2\rE_1\onel.
\end{equation*}
Other relations are shown in a similar way.
\end{proof}

\begin{lem}
\label{rel2}
In $\Tr(\du)$
\begin{equation}
\rE_i\rF_j\onel-\rF_j\rE_i\onel = \delta_{i,j}[\lambda_i]\onel.
\end{equation}
\end{lem}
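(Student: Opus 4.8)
The statement splits into the off-diagonal case $i\neq j$ and the diagonal case $i=j$, and only the latter carries content. Throughout, the plan is to use that composition in $\Tr(\du)$ is horizontal composition of identity $2$-morphisms, so that $\rE_i\rF_j\onel=[\Id_{\e_i\f_j\onel}]$ and $\rF_j\rE_i\onel=[\Id_{\f_j\e_i\onel}]$, together with the trace property (Corollary \ref{TrUH}), which I will invoke repeatedly in the cyclic form $[gh]=[hg]$.

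For $i\neq j$ I would produce a $2$-isomorphism $\e_i\f_j\onel\simeqto\f_j\e_i\onel$ and conclude by cyclicity. The two equalities in \eqref{DubbleCrossijr} say precisely that the mixed sideways crossing $a\colon\e_i\f_j\onel\to\f_j\e_i\onel$ and its reverse are mutually inverse, so $a$ is invertible. Then $[\Id_{\e_i\f_j\onel}]=[a^{-1}\circ a]=[a\circ a^{-1}]=[\Id_{\f_j\e_i\onel}]$, whence $\rE_i\rF_j\onel-\rF_j\rE_i\onel=0=\delta_{i,j}[\lambda_i]\onel$.

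For $i=j$ I would apply the local relations \eqref{DubbleCrossiir1} and \eqref{DubbleCrossiir2} to the identity $2$-morphisms, obtaining in $\Tr(\du)$
\[
[\Id_{\e_i\f_i\onel}]=-[X]+A,\qquad [\Id_{\f_i\e_i\onel}]=-[X']+B,
\]
where $X$, $X'$ are the double-crossing endomorphisms of $\e_i\f_i\onel$, resp. $\f_i\e_i\onel$, and $A$, $B$ are the displayed cup--bubble--cap sums over $f_1+f_2+f_3=\lambda_i-1$, resp. $f_1+f_2+f_3=-\lambda_i-1$. The first point is that the double crossings cancel: writing $X=L\circ R$ with the elementary sideways crossings $R\colon\e_i\f_i\onel\to\f_i\e_i\onel$ and $L\colon\f_i\e_i\onel\to\e_i\f_i\onel$, the relation \eqref{DubbleCrossiir2} exhibits $X'=R\circ L$ with the \emph{same} $R,L$, so cyclicity gives $[X]=[L\circ R]=[R\circ L]=[X']$. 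Hence
\[
\rE_i\rF_i\onel-\rF_i\rE_i\onel=[\Id_{\e_i\f_i\onel}]-[\Id_{\f_i\e_i\onel}]=A-B.
\]

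It remains to evaluate $A-B$. Since $A$ is an empty sum when $\lambda_i\leq 0$ and $B$ is empty when $\lambda_i\geq 0$, exactly one of them survives, and by the $\e\leftrightarrow\f$, $\lambda_i\leftrightarrow-\lambda_i$ symmetry of the two relations it suffices to treat $A$ for $\lambda_i\geq 1$ and show $A=[\lambda_i]\onel$. Here I would again use cyclicity: each summand factors through $\onel$ as $\mathrm{cup}_{f_1}\circ\mathrm{bub}_{f_2}\circ\mathrm{cap}_{f_3}$, and $[\mathrm{cup}_{f_1}\circ(\mathrm{bub}_{f_2}\circ\mathrm{cap}_{f_3})]=[(\mathrm{bub}_{f_2}\circ\mathrm{cap}_{f_3})\circ\mathrm{cup}_{f_1}]$ turns it into a product of two bubbles, i.e. a $2$-endomorphism of a grading shift of $\onel$. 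Evaluating this closed diagram by the bubble normalizations \eqref{Zerobub} and the infinite-Grassmannian relation \eqref{gras}, each surviving term contributes $q^{\lambda_i-1-2k}[\Id_\onel]$, and summing over the admissible $(f_1,f_2,f_3)$ collapses the total to $q^{\lambda_i-1}+q^{\lambda_i-3}+\dots+q^{1-\lambda_i}=[\lambda_i]$ times $\onel$. This bubble bookkeeping is the only real obstacle; it is the exact analogue of the $\sl_2$ computation carried out in \cite{BHZ}, and since the diagonal relation involves the single index $i$ it reduces verbatim to that $\sl_2$ case at weight $\lambda_i$.
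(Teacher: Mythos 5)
Your proposal is correct and follows essentially the same route as the paper: for $i\neq j$ the two equalities of \eqref{DubbleCrossijr} make the sideways crossings mutually inverse and cyclicity finishes it, and for $i=j$ the curl relations \eqref{DubbleCrossiir1}--\eqref{DubbleCrossiir2} produce double crossings that cancel in the trace, leaving the cap--bubble--cup sums, which collapse via the degree/bubble relations \eqref{Zerobub} to $[\lambda_i]\onel$. The only cosmetic difference is that the paper kills the positive-degree bubble terms before closing the cap and cup into a single degree-zero bubble, whereas you close up first and then invoke the bubble normalizations; the computation is the same.
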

\begin{proof}
For $i=1$, $j=2$ the relation holds because
\begin{equation*}
\rE_1\rF_2\onel=
\left[\xy
(-3,-6);(-3,6); **[red]\dir{-} ?(0)*[red]\dir{<};
(3,-6);(3,6); **[blue]\dir{-} ?(1)*[blue]\dir{>};
(-5,0)*{};(5,0)*{};
\endxy\right] \eqr{\eqref{DubbleCrossijr}}
\left[\xy
(-3,0)*{};(3,6)*{} **[blue]\crv{(-3,3) & (3,3)};
(3,0)*{};(-3,6)*{} **[red]\crv{(3,3) & (-3,3)}?(1)*[red]\dir{>};
(-3,-6)*{};(3,0)*{} **[red]\crv{(-3,-3) & (3,-3)};
(3,-6)*{};(-3,0)*{} **[blue]\crv{(3,-3) & (-3,-3)}?(0)*[blue]\dir{<};
(-4.5,0);(5,0); **\dir{.};
(-5,0)*{};(5,0)*{};
\endxy\right]=
\left[\xy
(-3,0)*{};(3,6)*{} **[red]\crv{(-3,3) & (3,3)}?(1)*[red]\dir{>};
(3,0)*{};(-3,6)*{} **[blue]\crv{(3,3) & (-3,3)};
(-3,-6)*{};(3,0)*{} **[blue]\crv{(-3,-3) & (3,-3)}?(0)*[blue]\dir{<};
(3,-6)*{};(-3,0)*{} **[red]\crv{(3,-3) & (-3,-3)};
(-5,0)*{};(5,0)*{};
\endxy\right] \eqr{\eqref{DubbleCrossijr}}
\left[\xy
(-3,-6);(-3,6); **[blue]\dir{-} ?(1)*[blue]\dir{>};
(3,-6);(3,6); **[red]\dir{-} ?(0)*[red]\dir{<};
(-5,0)*{};(5,0)*{};
\endxy\right]=
\rF_2\rE_1\onel.
\end{equation*}
For $i=2$, $j=1$ the relation is shown in the similar way. If $i=j$ and $\lambda_i\geq 0$
\begin{equation*}
\rE_i\rF_i\onel=
\left[\xy
{\ar (0,-8)*{}; (0,8)*{}};
(-3,0)*{};(3,0)*{};
\endxy\xy
{\ar (0,8)*{}; (0,-8)*{}};
(-3,-9)*{};(3,9)*{};
\endxy\right] \eqr{\eqref{DubbleCrossiir1}}
-\left[\xy
(-3,0)*{};(3,8)*{} **\crv{(-3,4) & (3,4)};
(3,0)*{};(-3,8)*{} **\crv{(3,4) & (-3,4)}?(1)*\dir{>};
(-3,-8)*{};(3,0)*{} **\crv{(-3,-4) & (3,-4)};
(3,-8)*{};(-3,0)*{} **\crv{(3,-4) & (-3,-4)}?(0)*\dir{<};
(-6,-9)*{};(6,9)*{};
\endxy\right]
+ \sum_{f_1+f_2+f_3=\lambda_i-1}
\left[\xy
 (3,9)*{};(-3,9)*{} **\crv{(3,4) & (-3,4)} ?(1)*\dir{>} ?(.2)*{\bullet};
(5,6)*{\scriptstyle f_1};
 (0,0)*{\rbub};
(5,-2)*{\scriptstyle f_2};
 (-3,-9)*{};(3,-9)*{} **\crv{(-3,-4) & (3,-4)} ?(1)*\dir{>}?(.8)*{\bullet};
(5,-6)*{\scriptstyle f_3};
 (-6,-10)*{};(7,10)*{};
\endxy\right]\, .
\end{equation*}
If the bubble has a non-zero degree, the trace is $0$, so
\begin{equation*}
\rE_i\rF_i\onel=
-\left[\xy
(-3,0)*{};(3,6)*{} **\crv{(-3,3) & (3,3)};
(3,0)*{};(-3,6)*{} **\crv{(3,3) & (-3,3)}?(1)*\dir{>};
(-3,-6)*{};(3,0)*{} **\crv{(-3,-3) & (3,-3)};
(3,-6)*{};(-3,0)*{} **\crv{(3,-3) & (-3,-3)}?(0)*\dir{<};
(-5,0)*{};(5,0)*{};
\endxy\right] + \sum_{f_1+f_3=\lambda_i-1}
\left[\xy
 (3,6)*{};(-3,6)*{} **\crv{(3,1) & (-3,1)} ?(1)*\dir{>} ?(.2)*{\bullet};
(5,3)*{\scriptstyle f_1};
 (-3,-6)*{};(3,-6)*{} **\crv{(-3,-1) & (3,-1)} ?(1)*\dir{>}?(.8)*{\bullet};
(5,-3)*{\scriptstyle f_3};
 (-5.5,0);(7,0); **\dir{.};
 (-6,0)*{};(7,0)*{};
\endxy\right]=
\end{equation*}
\begin{equation*}
=-\left[\xy
(-3,0)*{};(3,6)*{} **\crv{(-3,3) & (3,3)};
(3,0)*{};(-3,6)*{} **\crv{(3,3) & (-3,3)}?(1)*\dir{>};
(-3,-6)*{};(3,0)*{} **\crv{(-3,-3) & (3,-3)};
(3,-6)*{};(-3,0)*{} **\crv{(3,-3) & (-3,-3)}?(0)*\dir{<};
(-5,0)*{};(5,0)*{};
\endxy\right] + \sum_{f_1+f_3=\lambda_i-1}q^{1-\lambda_i+2f_3}
\left[\xy
(0,0)*{\lbbub};
(7,-3)*{\scriptstyle f_1+f_3};
 (-5,-6)*{};(7,6)*{};
\endxy\right] \eqr{\eqref{Zerobub}}
-\left[\xy
(-3,0)*{};(3,6)*{} **\crv{(-3,3) & (3,3)};
(3,0)*{};(-3,6)*{} **\crv{(3,3) & (-3,3)}?(1)*\dir{>};
(-3,-6)*{};(3,0)*{} **\crv{(-3,-3) & (3,-3)};
(3,-6)*{};(-3,0)*{} **\crv{(3,-3) & (-3,-3)}?(0)*\dir{<};
(-5,0)*{};(5,0)*{};
\endxy\right] + [\lambda_i]\onel.
\end{equation*}
For $\lambda_i\geq 0$
\begin{equation*}
\rF_i\rE_i\onel=
\left[\xy
(-3,-6);(-3,6); **\dir{-} ?(1)*\dir{>};
(3,-6);(3,6); **\dir{-} ?(0)*\dir{<};
(-5,0)*{};(5,0)*{};
\endxy\right] \eqr{\eqref{DubbleCrossiir2}}
-\left[\xy
(-3,0)*{};(3,6)*{} **\crv{(-3,3) & (3,3)}?(1)*\dir{>};
(3,0)*{};(-3,6)*{} **\crv{(3,3) & (-3,3)};
(-3,-6)*{};(3,0)*{} **\crv{(-3,-3) & (3,-3)}?(0)*\dir{<};
(3,-6)*{};(-3,0)*{} **\crv{(3,-3) & (-3,-3)};
(-5,0)*{};(5,0)*{};
\endxy\right]\, ,
\end{equation*}
so $\rE_i\rF_i\onel-\rF_i\rE_i\onel = [\lambda_i]\onel$. If $\lambda_i<0$ the proof is similar.
\end{proof}

Now we are ready to show the main result.

\begin{thm}
\label{Final}
There is an isomorphism
\begin{equation}
\Tr(\u)\cong\AdU.
\end{equation}
of $\Z[q,\q]$-algebras.
\end{thm}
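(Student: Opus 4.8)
The plan is to prove the statement hom-space by hom-space at the level of $\Z[q,\q]$-modules, and then to upgrade the resulting module isomorphism to an algebra isomorphism by verifying the defining relations of $\dU$, exactly as in the proof of Theorem~\ref{poz}. First I would fix $\lambda,\mu\in\X$. Every object of $\u(\lambda,\mu)$ is by definition a direct sum of generating $1$-morphisms (with shifts), so Proposition~\ref{Dir} gives $\Tr(\u(\lambda,\mu))\cong\Tr(\u(\lambda,\mu)|_{\hhlm})$. Because the objects of $\hhlm$ carry only trivial idempotents, the full subcategory $\u(\lambda,\mu)|_{\hhlm}$ agrees with $\du(\lambda,\mu)|_{\hhlm}$, whose endomorphism space is precisely $\dHlm$. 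The projection $\pi=\delta(\beta^\infty\gamma)^\infty\beta^\infty\alpha$ of Subsection~\ref{Construct} is a projection of $\dHlm$ onto $\Klm$ satisfying $[\pi(f)]=[f]$, which are conditions (1) and (2) of Proposition~\ref{mpp}; condition (3), the trace property, is supplied by the Proposition of Subsection~\ref{TraceP} together with Corollary~\ref{TrUH}. Hence Proposition~\ref{mpp} yields $\Tr(\u(\lambda,\mu))\cong\Klm\cong 1_\mu(\AdU)1_\lambda$, the final isomorphism being the restriction of $\iota$.

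Assembling these over all $\lambda,\mu$ and using $\Tr(\u)=\bigoplus_{\lambda,\mu}\Tr(\u(\lambda,\mu))$ together with the decomposition \eqref{DSDL}, the maps $\iota$ glue to a single $\Z[q,\q]$-module isomorphism $[\iota]\colon\AdU\to\Tr(\u)$ with $\Eia 1_\lambda\mapsto\rEia\onel$ and $\Fia 1_\lambda\mapsto\rFia\onel$. To see that $[\iota]$ is multiplicative I would pass to $\Q(q)$ and argue in $\Tr(\du)\cong\Tr(\u)$ (Proposition~\ref{Kar}), where the relation lemmas live. The algebra $\dU=\AdU\otimes\Q(q)$ is generated by the $E_i 1_\lambda$ and $F_i 1_\lambda$, and by \eqref{deE}--\eqref{deF}, in the form $\rE_i^a\onel=[a]!\,\rEia\onel$ (and likewise for $\rF$), the image $\Tr(\du)\otimes\Q(q)$ is generated by $\rE_i\onel$ and $\rF_i\onel$. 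The structural relations \eqref{relI}--\eqref{relII} hold automatically from the idempotented $1$-category structure and the identities $\e_i\onel=\onell{\lambda+i'}\e_i$, the quantum Serre relations \eqref{relEE}--\eqref{relFF} are Lemma~\ref{rel1}, and the commutator relation \eqref{EFsw} is Lemma~\ref{rel2}. By the universal property of $\dU$ as the algebra presented by \eqref{relI}--\eqref{relFF}, there is a surjective algebra homomorphism $\dU\to\Tr(\du)\otimes\Q(q)$ agreeing with $[\iota]\otimes\Q(q)$ on generators; since $[\iota]$ is already a module isomorphism both sides have equal dimension in each weight-and-degree space, so this homomorphism is an isomorphism of $\Q(q)$-algebras.

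Finally I would descend to the integral form. The $\Q(q)$-algebra isomorphism sends $\Eia 1_\lambda\mapsto\rEia\onel$ and $\Fia 1_\lambda\mapsto\rFia\onel$, the same assignment as $[\iota]$; since $\AdU$ is spanned by products of these divided powers and $\Tr(\du)$ is generated integrally by the very same elements, the algebra map and $[\iota]$ coincide and restrict to a $\Z[q,\q]$-algebra isomorphism $\AdU\cong\Tr(\du)\cong\Tr(\u)$, as claimed. Since the conceptual core, namely the trace property of $\pi$, is already established, I expect the only delicate point here to be purely organisational: the identification of $\Klm$ inside $\dHlm$ (viewing the identities of the divided-power objects of $\b$ as the endomorphisms produced by $\delta^{\pm}$) so that Proposition~\ref{mpp} applies verbatim, and the careful matching of the module and algebra maps on divided powers during the descent. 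Everything else is a direct parallel of the positive-part argument of Theorem~\ref{poz}, now carried out for both signs and with the mixed relation \eqref{EFsw} handled by Lemma~\ref{rel2}.
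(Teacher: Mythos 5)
Your proposal is correct and follows essentially the same route as the paper's proof: reduce to $\du(\lambda,\mu)|_{\hhlm}$ via Propositions \ref{Kar} and \ref{Dir}, apply Proposition \ref{mpp} with the projection $\pi$ and its trace property (Corollary \ref{TrUH}) to get the $\Z[q,\q]$-module isomorphism $[\iota]$, and then verify multiplicativity over $\Q(q)$ using the generators $\rE_i\onel$, $\rF_i\onel$ and Lemmas \ref{rel1}--\ref{rel2} before descending to the integral form. Your slightly more explicit handling of the universal-property/dimension-count step is a presentational refinement, not a different argument.
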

\begin{proof}
Proposition \ref{Kar} implies $\Tr(\u)\cong\Tr(\du)$, so it is enough to prove that $\Tr(\du)\cong\AdU$.

Corollary \ref{TrUH} ensures that for every $\lambda,\mu\in \X$ the projection $\pi\colon\dHlm=\H(\du(\lambda,\mu)|_{\hhlm})\to \Klm$ satisfies the trace property (property 3. in Proposition \ref{mpp}) for the category $\du(\lambda,\mu)|_{\hhlm}$. Also the other conditions needed in Proposition \ref{mpp} are satisfied, so $\Tr(\du(\lambda,\mu)|_{\hhlm})\cong K(\lambda,\mu)\cong 1_\mu(\AdU)1_\lambda$ as a $\Z$-module. Proposition \ref{Dir} implies $\Tr(\du(\lambda,\mu))\cong \Tr(\du(\lambda,\mu)|_{\hhlm})\cong 1_\mu(\AdU)1_\lambda$. Clearly, the isomorphism preserves multiplication with $q$, so $\Tr(\du(\lambda,\mu))\cong 1_\mu(\AdU)1_\lambda$ as a $\Z[q,\q]$-module. Therefore,
\begin{equation*}
\Tr(\du)\cong\bigoplus_{\lambda,\mu\in \X}\Tr(\du(\lambda,\mu))\cong\bigoplus_{\lambda,\mu\in \X}1_\mu (\AdU) 1_\lambda \eqr{\eqref{DSDL}} \AdU
\end{equation*}
as a $\Z[q,\q]$-module.

Let the isomorphism be $[\iota]\colon\AdU\to\Tr(\du)$. It maps $\Eia 1_\lambda\mapsto\rEia\onel$ and $\Fia 1_\lambda\mapsto\rFia\onel$. To prove the theorem we need to check that elements multiply (or compose if we look to it as a 1-category) in the same way.

$\AdU\otimes\Q(q)\cong\dU$ is generated by $E_i1_\lambda$ and $F_i1_\lambda$ for $i\in \I$, $\lambda\in \X$, as $\Q(q)$-algebra. Relations \eqref{deE} and \eqref{deF} imply
\begin{equation}
\label{divT2}
\rE_i^a\onel = [a]!\rEia\onel,\quad
\rF_i^a\onel = [a]!\rFia\onel
\end{equation}
in $\Tr(\du)$. Therefore, $\Tr(\du)\otimes\Q(q)$ is generated by $\rE_i\onel=[\iota](E_i1_\lambda)$ and $\rF_i\onel=[\iota](F_i1_\lambda)$ for $i\in \I$, $\lambda\in \X$, as $\Q(q)$-algebra. Recall the defining relation of $\dU$  \eqref{relI}--\eqref{relFF}. Relations \eqref{relI} and \eqref{relII} hold by construction also in $\Tr(\dup)\otimes\Q(q)$ after applying $[\iota]$. Lemmas \ref{rel1} and \ref{rel2} ensure that after applying $[\iota]$ the other relations hold in $\Tr(\dup)\otimes\Q(q)$ too. Divided powers, from which we construct the bases, are defined from above generators in the same way (compare \eqref{divT2} and \eqref{div}) so it is really
\begin{equation*}
\Tr(\du)\otimes\Q(q)\cong\AdU\otimes\Q(q).
\end{equation*}
Since the multiplications are the same, they are also the same in the $\Z[q,\q]$-subalgebras $\Tr(\dup)$ and $\AdUp$, hence the proof.
\end{proof}

\end{document}